\DeclareMathOperator*{\Min}{minimize}
\newcommand{\qed}{\nobreak \ifvmode \relax \else
      \ifdim\lastskip<1.5em \hskip-\lastskip
      \hskip1.5em plus0em minus0.5em \fi \nobreak
      \vrule height0.75em width0.5em depth0.25em\fi}
\title{Restoration of Images Corrupted by Impulse Noise and Mixed Gaussian Impulse Noise using Blind Inpainting}
\author{Ming~Yan
\thanks{Department of Mathematics, University of California, Los Angeles, CA, 90095 USA. E-mail: yanm@math.ucla.edu.}
}
\begin{document}

\maketitle

\begin{abstract}
This article studies the problem of image restoration of observed images corrupted by impulse noise and mixed Gaussian impulse noise. Since the pixels damaged by impulse noise contain no information about the true image, how to find this set correctly is a very important problem. We propose two methods based on blind inpainting and $\ell_0$ minimization that can simultaneously find the damaged pixels and restore the image. By iteratively restoring the image and updating the set of damaged pixels, these methods have better performance than other methods, as shown in the experiments. In addition, we provide convergence analysis for these methods, these algorithms will converge to  coordinatewise minimum points. In addition, they will converge to local minimum points (or with probability one) with some modifications in the algorithms.
\end{abstract}

\begin{keywords}
impulse noise, mixed Gaussian impulse noise, total variation, blind inpainting, image restoration, $\ell_0$ minimization
\end{keywords}


\pagestyle{myheadings}
\thispagestyle{plain}
\markboth{MING YAN}{Restoration of Images Corrupted by Impulse Noise using Blind Inpainting}

\section{Introduction}
\label{sec:BIintro}

Observed images are often corrupted by impulse noise during image acquisition and transmission, caused by malfunctioning pixels in camera sensors, faulty memory locations in hardware, or bit errors in transmission~\cite{Bovi05}. There are two common types of impulse noise: salt-and-pepper impulse noise and random-valued impulse noise. Assume that the dynamic range of an image is $[d_{\mbox{\tiny min}},d_{\mbox{\tiny max}}]$. For images corrupted by salt-and-pepper impulse noise, the noisy pixels can take only two values $d_{\mbox{\tiny min}}$ and $d_{\mbox{\tiny max}}$, while for images corrupted by random-valued impulse noise, the noisy pixels can take any random value between $d_{\mbox{\tiny min}}$ and $d_{\mbox{\tiny max}}$.

In this work, the original unknown $M\times N$ image $u$ is defined on a domain {$\Omega=\{(i,j):\ i=1,\dots,M,\ j=1,\dots,N\}$}, and the observed $M\times N$ image $f$ is modeled as
\begin{align}\label{for:problem_all}
f_{i,j}=\left\{\begin{array}{ll} (Hu)_{i,j}+(n_1)_{i,j}, & {(i,j)\in\Omega_1},\\
                                          (n_2)_{i,j}, & {(i,j)\in \Omega_1^c:= \Omega\backslash\Omega_1}.\end{array}\right.
\end{align}
Here, $n_2$ is the impulse noise, and $n_1$ is the additive zero-mean Gaussian white noise. $H$ is the identity or a blurring operator, which is assumed to be continuous. The subset $\Omega_1^c$ of $\Omega$ denotes the region where the information of $Hu$ is missing. The problem is to find the true image $u$ from observed image $f$ given the operator $H$. 

If $\Omega_1^c$ is empty, there is no impulse noise, then we have $f=Hu+n_1$, which is an image denoising (and deblurring) problem, and it has been extensively studied by both signal processing researchers and mathematicians. If $\Omega_1^c$ is not empty and known, this can be considered as an image inpainting (and deblurring) problem.

Here, we will consider the last and most difficult case where $\Omega_1^c$ is not empty and unknown. The challenge of this problem is to restore the lost details, and remove the impulse noise simultaneously. If $n_1=0$, this problem is an impulse noise removal (and deblurring) problem and if $n_1\neq0$ it becomes a mixed Gaussian impulse noise removal (and deblurring) problem. There are already several types of approaches for solving these problems.

The first type of approaches treats $n_2$ as outliers and uses the $\ell_1$ norm in the fidelity term to increase the robustness of inpainting to outliers~\cite{Niko04,BaSK05,BaKS06,Gilo08}, and the problem is to solve
\begin{align}
 \Min_u \sum_{i,j}|(Hu)_{i,j}-f_{i,j}|+\lambda_1J(u),
\end{align}
where $J(u)$ is a regularization on the true image $u$. There are many candidates for the regularization $J(u)$, and some examples are Tikhonov regularization \cite{TikA77}, Geman and Reynolds' half quadratic variational models \cite{GemR92}, Rudin, Osher and Fatemi's total variation models \cite{RuOF92,RudO94}, and framelet based models \cite{CaCS08,LiSD11}. This approach does not need to find the damaged pixels and performs well in impulse noise removal. However, for the case of images corrupted by mixed Gaussian impulse noise, the Gaussian noise is not treated properly.

The second type of approaches is the two-stage approach \cite{ChHN04,ChHN05,CaCN08, CaCN10, LiSD11,Xiao2011,rodriguez-2012-mixed}, which estimates the inpainting region $\Omega_1^c$ before estimating $u$. In these approaches, the second stage becomes a regular image inpainting (and deblurring) problem \cite{BeSC00,BeVS03,BeVS03b,ChSZ06}
\begin{align}
 \Min_u {1\over2}\sum_{(i,j)\in \Omega_1}((Hu)_{i,j}-f_{i,j})^2+\lambda_1J(u).
\end{align}
The success of these two-stage approaches relies on the accurate detection of $\Omega_1^c$, e.g. adaptive median filter (AMF) \cite{GonW01} is used to detect salt-and-pepper impulse noise, while adaptive center-weighted median filter (ACWMF) \cite{ACWMF} and rank-ordered logarithmic difference (ROLD) \cite{DoCX07} are utilized to detect random-valued impulse noise.

Though adaptive median filter can detect most pixels damaged by salt-and-pepper impulse noise, it is more difficult to detect pixels corrupted by random-valued impulse noise than salt-and-pepper impulse noise. Recently, by considering two different types of noise, Dong et al. \cite{BlindInpainingDong} proposed a new method using framelet to remove random-valued impulse noise plus Gaussian noise by solving
\begin{align}
 \Min_{u,v} {1\over2}\sum_{i,j}((Hu)_{i,j}+v_{i,j}-f_{i,j})^2+\lambda_1 \|Wu\|_1 + \lambda_2\sum_{i,j}|v_{i,j}|,
\end{align}
where $W$ is a transformation from the image to the framelet coefficients. Two unknowns $u$ (restored image) and $v$ (noise) are introduced into this variational model, and their methods can simultaneously find $u$ and $v$ using split Bregman iterations \cite{GolO09}.

Dong et al.'s method uses $\ell_1$ norm as a convex approximation of $\ell_0$ term to make the result $v$ sparse, and keep the problem convex in the meantime. However, using non-convex optimization ($\ell_p$ when $p<1$) has better performance than convex optimization in dealing with the sparsity, as shown in compressive sensing~\cite{nonconvex1}. Even $\ell_0$ minimization and smoothed $\ell_0$ minimization are used in many algorithms~\cite{SL0,PenaltyL0,Blumensath08,ZhangDL11,DongZ12,PenaltyL02}. In this paper, we will use $\ell_0$ minimization instead of $\ell_1$ minimization in the problem, and by using $\ell_0$ minimization, the problem of finding $u$ can be solved by considering a problem of  finding $u$ and $\Omega_1$. In addition, using alternating minimization algorithm, it can be solved easily by alternately solving the image inpainting problem and finding the damaged pixels.

The work is organized as follows. In section~\ref{sec:BlindInpaintng} and~\ref{sec:BIAOP}, we introduce our general methods for removing impulse noise using two different treatments for the $\ell_0$ term: I) the $\ell_0$ term is put in the objective function, II) the $\ell_0$ term is in the constraint. The algorithms for these two models are similar. The convergence analysis of these two algorithms is shown in section~\ref{sec:BIconv}. These algorithms will converge to  coordinatewise minimum points. In addition, they will converge to local minimum points (or with probability one) with some modifications in the algorithms. Some experiments are given in section~ \ref{sec:Experiments} to show the efficiency of the proposed methods for removing impulse noise and mixed Gaussian impulse noise. We will end this work by a short conclusion section.

\section{Blind Inpainting Models using $\ell_0$ Term}
\label{sec:BlindInpaintng}
\subsection{Formulation}
\label{sec:formulation}

For an $M\times N$ image, $\Lambda\in\{0,1\}^{M\times N}$ is a binary matrix representing a subset $\Omega_s$ of the pixels as follows:
\begin{align}
 \Lambda_{i,j} = \left\{\begin{array}{cl}
                        1, &\mbox{ if pixel } (i,j) \in \Omega_s,\\
			0, &\mbox{ otherwise}.
                       \end{array}
\right.
\end{align}
The connection between binary matrix $\Lambda$ and subset $\Omega_s$ will be used many times in the follow.

Given a degraded image $f$, our objective is to estimate the damaged (or missing) pixels and restore them. We propose the following model using $\ell_0$ minimization to solve this problem:
\begin{align}\label{for:problem_BI}
 \Min\limits_{u,v}F^P(u,v)\equiv{1\over2}\sum_{i,j}((Hu)_{i,j}+v_{i,j}-f_{i,j})^2+\lambda_1 J(u) + \lambda_2\|v\|_0,
\end{align}
where $J(u)$ is the regularization term on the image, $\lambda_1$ and $\lambda_2$ are two positive parameters. Here $P$ means that $\ell_0$ term is used as a penalty term in the objective function. The parameter $\lambda_1$ is dependent on the noise level of $n_1$. The higher the noise level, the larger the parameter should be. The parameter $\lambda_2$ is dependent on the noise level of impulse noise. The difference from Dong et al.'s method is that $\ell_1$ norm is replaced by $\ell_0$ term. It is difficult to solve this problem because of the $\ell_0$ term in the function. $\ell_0$ term makes the problem non-convex and the objective function is non-continuous. Because what we need to find is just $u$, we can eliminate $v$ from problem~(\ref{for:problem_BI}) by defining $E^P_0(u)$ as $\min\limits_vF^P(u,v)$,  and the problem becomes 
\begin{align}\label{for:problem_BI_uonly}
\Min_u E^P_0(u)=\min_vF^P(u,v).
\end{align}
However, $E^P_0(u)$ is still non-convex and difficult to solve. In order to solve this problem, we will transform the problem into a continuous and multi-convex problem of $u$ and $\Lambda$ by introducing a new variable $\Lambda$, and by solving the new problem of $u$ and $\Lambda$, we can obtain a local optimal solution for the original problem~(\ref{for:problem_BI_uonly}) of $u$ only.

First of all, we provide the intuition behind choosing $\ell_0$ minimization instead of $\ell_1$ minimization for $v$. Because of the speciality of $\ell_0$ minimizations, $v$ can be easily eliminated from $\min\limits_vF^P(u,v)$ and we can obtain the function of $u$ only as follows: 
\begin{align}\label{for:problem_BI_only}
E^P_0(u)= {1\over2}\sum_{i,j} R_0((Hu)_{i,j}-f_{i,j}) + \lambda_1 J(u) ,
\end{align}
where $R_0(x) = \min(|x|^2, 2\lambda_2)$. Similarly, we can obtain the function of $u$ only when $\ell_1$ term is used instead of $\ell_0$ term as follows:
\begin{align}\label{for:problem_BI_only1}
E^P_1(u)\equiv {1\over2}\sum_{i,j}R_1((Hu)_{i,j}-f_{i,j}) + \lambda_1 J(u),
\end{align}
where $R_1(x) = \left\{ \begin{array}{ll}  |x|^2, & \mbox{ if } |x|\leq \lambda_2,\\ 2\lambda_2 |x|-\lambda_2^2, &\mbox{ otherwise}. \end{array}\right.$

The data fidelity terms ${1\over2}\sum_{i,j}R_0((Hu)_{i,j}-f_{i,j})$ and ${1\over2}\sum_{i,j}R_1((Hu)_{i,j}-f_{i,j})$ in problems~(\ref{for:problem_BI_only}) and~(\ref{for:problem_BI_only1}) are used to approximate the negative log-likelihood resulting from the mixed Gaussian impulse noise model, with each $R_0$ or $R_1$ describing the negative log-likelihood for each pixel, because the noise is independently distributed at all pixels. What we are trying to do is finding better and simpler model for mixed Gaussian impulse noise. We can simulate the probability distribution of the pixel values when it is corrupted by both additive Gaussian and random-valued impulse noise. For a fixed pixel value (128 in Fig.~\ref{fig:noise}), a value is added as a Gaussian noise, and it is replaced by any random value between $d_{\mbox{min}}$ and $d_{\mbox{max}}$ (0 and 255 in Fig.~\ref{fig:noise}) with some probability related to the impulse noise level. This is run for $10^8$ times and the approximated negative log-likelihood function is shown in Fig.~\ref{fig:noise}.  We can see that it is a constant when the value is far from the true pixel value. In this figure we also show $R_0(x-128)$ and $R_1(x-128)$ with some scaling and lifting.

\begin{figure*}[!h]
\begin{center}
{\includegraphics[width=0.5\linewidth]{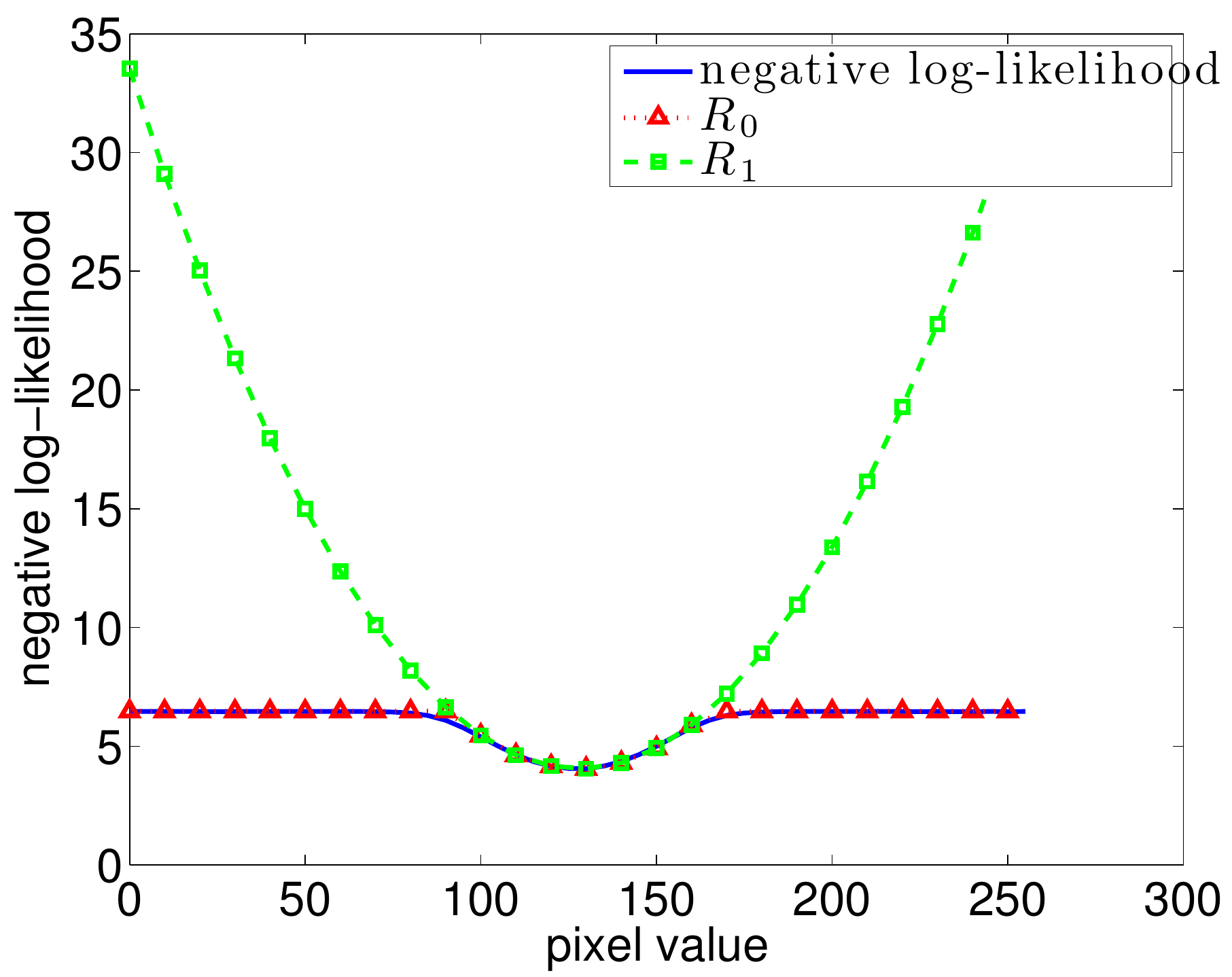}}
\caption{Negative log-likelihood value for the mixed Gaussian impulse noise, and comparison with $R_0$ and $R_1$}
\label{fig:noise}
\end{center}
\end{figure*}

From Fig.~\ref{fig:noise}, we can find that it is more reasonable to use $\ell_0$ term instead of $\ell_1$ term. However, $\ell_1$ minimization makes the problem convex and it is easier to find the solution, while the problem is non-convex and difficult to solve if $\ell_0$ minimization is used. Next, we will introduce an auxiliary variable $\Lambda$ or $\Omega_1$ and we can find a local minimizer of $E^P_0(u)$ by solving the new problem of $u$ and $\Lambda$.

For any fixed $\bar{u}$, we can find the optimal $v$ by solving the following optimization problem: 
$$\Min_v {1\over 2} \sum_{i,j}(H\bar{u}+v-f)^2+\lambda_2\|v\|_0.$$
The solution is 
\begin{align*}
v_{i,j} = \left\{\begin{array}{ll}0, &\mbox{ if }|f_{i,j}-(H\bar{u})_{i,j}|^2< 2\lambda_2,\\
f_{i,j}-(H\bar{u})_{i,j},& \mbox{ if }|f_{i,j}-(H\bar{u})_{i,j}|^2>2\lambda_2,\\
{0 \mbox{ or } f_{i,j}-(H\bar{u})_{i,j}},& \mbox{ if }|f_{i,j}-(H\bar{u})_{i,j}|^2=2\lambda_2.\end{array}\right.
\end{align*}
When $v_{i,j}\neq0$, we have $v_{i,j} = f_{i,j}-(H\bar{u})_{i,j}$. Therefore if we denote
\begin{align}
 \Lambda_{i,j} = \left\{\begin{array}{cl}
			0, &\mbox{ if } v_{i,j} \neq  0,\\
                        1, &\mbox{ if } v_{i,j} = 0,
                       \end{array}
\right.
\end{align}
then we have a new problem with $u$ and $\Lambda$ as follows:
\begin{align}\label{for:problem_BI_2a}
 \Min\limits_{u,\Lambda\in\{0,1\}^{M\times N}}F_1(u,\Lambda)\equiv {1\over2} \sum_{i,j}\Lambda_{i,j}((Hu)_{i,j}-f_{i,j})^2+\lambda_1 J(u)+\lambda_2 \sum_{i,j}(1-\Lambda_{i,j}).
\end{align}
Problem (\ref{for:problem_BI_2a}) can be solved easily by alternating minimization method, and the algorithm for solving (\ref{for:problem_BI_2a}) is described in section~\ref{sec:algorithm}. For a general alternative minimization procedure for convex and non-convex problems, please see~\cite{PTseng}.

{\bf Remark: }In fact, the constraint of $\Lambda\in\{0,1\}^{M\times N}$ can further be relaxed into $\Lambda\in[0,1]^{M\times N}$, and we have the following multi-convex problem:
\begin{align}\label{for:problem_BI_2}
 \Min\limits_{u,\Lambda\in[0,1]^{M\times N}}F_1(u,\Lambda)={1\over2} \sum_{i,j}\Lambda_{i,j}((Hu)_{i,j}-f_{i,j})^2+\lambda_1 J(u)+\lambda_2 \sum_{i,j}(1-\Lambda_{i,j}).
\end{align}

If $u$ is fixed, $F_1(u,\Lambda)$ is a function of $\Lambda$ only and it is separable. The optimal solution for $\Lambda$ with fixed $u$ is 
\begin{align*}
 \Lambda_{i,j} = \left\{\begin{array}{cl}
			0, &\mbox{ if } |f_{i,j}-(Hu)_{i,j}|^2> 2\lambda_2,\\
                        1, &\mbox{ if } |f_{i,j}-(Hu)_{i,j}|^2< 2\lambda_2,\\
                        t, & \mbox{ if } |f_{i,j}-(Hu)_{i,j}|^2= 2\lambda_2,
                       \end{array}
\right.
\end{align*}
where $t$ is 0 or 1 for unrelaxed problem (\ref{for:problem_BI_2a}), and $t$ is any number in $[0,1]$ for relaxed problem (\ref{for:problem_BI_2}). If we eliminate $\Lambda$ as before, $\min_{\Lambda\in\{0,1\}^{M\times N}}F_1(u,\Lambda)$ and $\min_{\Lambda\in[0,1]^{M\times N}}F_1(u,\Lambda)$ are functions with respect to $u$ only and same as $E^P_0(u)$. Because all the problems are non-convex, there may exist many local optimal solutions which may not be global optimal solutions. We will show in section~\ref{Equivalency} that by solving problem (\ref{for:problem_BI_2}), we can obtain a local minimizer of function $E^P_0(u)$.

\subsection{Algorithm}
\label{sec:algorithm}
The objective function defined in (\ref{for:problem_BI_2}) is non-convex. It is still difficult to solve it in the pair $(u,\Lambda)$, but we can use alternating minimization method, which separates the energy minimization over $u$ and $\Lambda$ into two steps. For solving the problem in $u$ with $\Lambda$ fixed, it is a convex optimization problem for image inpainting and the problem of finding $\Lambda$ with $u$ fixed can be solved in one step. These two subproblems are

1) Finding $u$: Given an estimate of the support matrix $\Lambda$, the minimization over $u$ is just an image inpainting (and deblurring) problem \cite{ChanShen2005}:
\begin{align}\label{for:problem_Inpainting}
 \Min\limits_{u}{1\over2}\sum_{(i,j)\in\Omega_1}((Hu)_{i,j}-f_{i,j})^2+\lambda_1 J(u).
\end{align}
There are many existing methods for solving this problem.

2) Finding $\Lambda$: Given an estimate of the image $u$, the minimization over $\Lambda$ becomes:
\begin{align}
 \Min\limits_{\Lambda\in[0,1]^{M\times N}}{1\over2} \sum_{i,j}\Lambda_{i,j}((Hu)_{i,j}-f_{i,j})^2-\lambda_2 \sum_{i,j}\Lambda_{i,j}.
\end{align}
Since this minimization problem of $\Lambda$ is separable, it can be solved exactly in only one step:
\begin{align}\label{for:problem_UpdateLambda}
 \Lambda_{i,j}=\left\{\begin{array}{cc}
                      0&\mbox{ if }\left((Hu)_{i,j}-f_{i,j}\right)^2/2> \lambda_2,\\
                      0\mbox{ or }1&\mbox{ if }\left((Hu)_{i,j}-f_{i,j}\right)^2/2=\lambda_2,\\
		      1&\mbox{ if }\left((Hu)_{i,j}-f_{i,j}\right)^2/2 < \lambda_2.\\
                     \end{array}
\right.
\end{align}
Therefore, the proposed algorithm for blind inpainting with $\ell_0$ minimization is iteratively finding $u$ and $\Lambda$. As mentioned in section~\ref{sec:formulation}, $\min_{\Lambda\in\{0,1\}^{M\times N}}F_1(u,\Lambda)=\min_{\Lambda\in\{0,1\}^{M\times N}}F_1(u,\Lambda)$ for all fixed $u$, thus we can force $\Lambda_{i,j}\in\{0,1\}$ during the algorithm. When $\left((Hu)_{i,j}-f_{i,j}\right)^2/2=\lambda_2$, we can randomly choose $\Lambda_{i,j}$ to be 0 or 1.

The detailed algorithm for blind inpainting is described below, the initial $\Lambda^0$ is chosen by the methods for detecting the impulse noise (AMF for salt-and-pepper impulse noise and ACWMF for random-valued impulse noise). Usually three iterations are sufficient, as shown in the experiments.

\begin{algorithm}[H]
\caption{Proposed blind inpainting algorithm.}\label{alg:TV_BI}
\begin{algorithmic}
	\State \textbf{Input:} $f$, $\lambda_1$, $\lambda_2$, $\Lambda^0$, $\epsilon$
	\State \textbf{Initialization:} $k=1$.
	\While{$k< 2$ or $F_1(u^k,\Lambda^k)-F_1(u^{k-1},\Lambda^{k-1})> \epsilon$}								\vspace{0.1cm}
                	\State \mbox{Obtain }$u^k$ \mbox{ by solving (\ref{for:problem_Inpainting})}.
                	\State \mbox{Obtain }$\Lambda^k$ \mbox{ by } (\ref{for:problem_UpdateLambda}).
		\State $k=k+1$.
	\EndWhile
\end{algorithmic}
\end{algorithm}

Here $\epsilon$ is chosen to be small and served as a stopping criteria to stop the algorithm when the difference in function values between two iterations is too small. $\lambda_1$ and $\lambda_2$ are two parameters depending on the noise levels of $n_1$ and $n_2$.

{\bf Remark: }This algorithm and the algorithm in next section depend on the initial $\Lambda^0$, and choosing a better $\Lambda^0$ will reduce the total number of iterations and the restoration result (because of the non-convexity of the problem). Therefore, we can choose the result of AMF and ACWMF for salt-and-pepper and random-valued impulse noise respectively as initial $\Lambda^0$. For salt-and-pepper impulse noise, AMF will provide a very accurate initial guess for $\Lambda$ for most cases and the improvement from more iterations is not too much, while for random-valued impulse noise, the output of ACWMF is not accurate, and more iterations are needed to improve the detection of corrupted pixels and the recovery result.

\section{Blind Inpainting Using Adaptive Outlier Pursuit}
\label{sec:BIAOP}

In the previous section, we proposed a method for blind inpainting by putting a $\ell_0$ term in the objective function, which can be solved by iteratively updating the set of pixels damaged by impulse noise (or the binary matrix $\Lambda\in\{0,1\}^{M\times N}$) and restoring the image. Instead of putting the $\ell_0$ term in the objective function, we can also put a constraint on $\|v\|_0$, which will be equivalent to a constraint on $\sum_{i,j}\Lambda_{i,j}$. This technique has been applied to robust 1-bit compressive sensing where there are sign flips in the binary measurements belonging to $\{-1,1\}$~\cite{YaYO12a} and robust matrix completion~\cite{YaYO12b}. We proposed an algorithm, named adaptive outlier pursuit (AOP), which can adaptively find the sign flips (outliers) and reconstruct the signal by using other measurements assumed to be correct. Since images corrupted by impulse noise can also be considered as sparsely corrupted measurements, the same idea can be applied in impulse noise (and mixed Gaussian impulse noise) removal by iteratively finding the pixels corrupted by impulse noise and recovering the image using other pixels.

Let us assume that the number of pixels corrupted by impulse noise is bounded above by a integer $L$, this can be obtained from the noise level of impulse noise. Therefore, the new problem is
\begin{align}\label{for:problem_AOP}\left.\begin{array}{rl}
 \Min\limits_{u,v}\ &{1\over2}\sum_{i,j}((Hu)_{i,j}+v_{i,j}-f_{i,j})^2+\lambda_1 J(u),\\
 \mbox{subject to }& \|v\|_0\leq L,\end{array}\right.
\end{align}
which can be written as 
\begin{align}\label{pro:F^C}
\Min\limits_{u,v} F^C(u,v)\equiv{1\over2}\sum_{i,j}((Hu)_{i,j}+v_{i,j}-f_{i,j})^2+\lambda_1 J(u)+{\iota}_{\{v:\|v\|_0\leq L\}},
\end{align}
where $\iota_{\{v:\|v\|_0\leq L\}}$ is the indicator function equals to zero when $\|v\|_0\leq L$ and $+\infty$ otherwise. Here $C$ means that the $\ell_0$ term is put in the constraint. We can further eliminate $v$, and the problem of $u$ only is:
\begin{align}
\Min_u E_0^C(u)\equiv \min_vF^C(u,v).
\end{align}

Similarly, we can introduce new variation $\Lambda$ and the corresponding problem of $u$ and $\Lambda$ is
\begin{align}\label{for:problem_AOP2}\left.\begin{array}{rl}
 \Min\limits_{u,\Lambda}& \sum_{i,j}{1\over2}\Lambda_{i,j}((Hu)_{i,j}-f_{i,j})^2+\lambda_1 J(u),\\
 \mbox{subject to }& \sum_{i,j}(1-\Lambda_{i,j})\leq L,\quad \Lambda_{i,j}\in\{0,1\},\end{array}\right.
\end{align}
which can be described in another way as
\begin{align}\label{pro:F_2}
\Min_{u,\Lambda\in\{0,1\}^{M\times N}}F_2(u,\Lambda)\equiv \sum_{i,j}{1\over2}\Lambda_{i,j}((Hu)_{i,j}-f_{i,j})^2+\lambda_1 J(u)+\iota_{\{\Lambda:\sum_{i,j}(1-\Lambda_{i,j})\leq L\}},
\end{align}
where $\iota_{\{\Lambda:\sum_{i,j}(1-\Lambda_{i,j})\leq L\}}$ is the indicator function equals to zero when $\sum_{i,j}(1-\Lambda_{i,j})\leq L$ and $+\infty$ otherwise.
Note that we can also relax the constraint $\Lambda\in\{0,1\}^{M\times N}$ to $\Lambda\in[0,1]^{M\times N}$ and the problem becomes a multi-convex problem, as done in the previous section. In addition we have $\min_{\Lambda\in\{0,1\}^{M\times N}}F_2(u,\Lambda)=\min_{\Lambda\in[0,1]^{M\times N}}F_2(u,\Lambda)=E_0^C(u)$.

This problem can also be solved iteratively as in the previous section. The $u$-subproblem is same as the previous one, and the $\Lambda$-subproblem is slightly different. In order to update $\Lambda$, we have to solve
\begin{align}\label{for:problem_AOP_Lambda}\left.\begin{array}{rl}
 \Min\limits_{\Lambda}&\sum_{i,j}\Lambda_{i,j}((Hu)_{i,j}-f_{i,j})^2/2,\\
 \mbox{subject to}&\sum_{i,j}(1-\Lambda_{i,j})\leq L, \Lambda_{i,j}\in\{0,1\}.
 \end{array}
 \right.
\end{align}
This problem is to choose $M\times N-L$ elements with least sum from $M\times N$ elements $\{( (H u)_{i,j}-f_{i,j})^2/2\}_{i=1,j=1}^{M,N}$. Given a $u$ estimated from (\ref{for:problem_Inpainting}), we can update $\Lambda$ in one step:
\begin{align}
\label{for:problem_UpdateLambda2}
\Lambda_{i,j} =
\left\{
 \begin{array}{ll}
 0,\ &\text{if}\ ((Hu)_{i,j}-f_{i,j})^2/2\geq\lambda_2,\\
 1,\ &\text{if}\ ((Hu)_{i,j}-f_{i,j})^2/2<\lambda_2,
 \end{array}
 \right.
\end{align}
where $\lambda_2$ is the $L^{th}$ largest term of $\{((Hu)_{i,j}-f_{i,j})^2/2\}_{i=1,j=1}^{M,N}$. If the $L^{th}$ and $(L+1)^{th}$ largest terms are equal, then we can choose any binary matrix $\Lambda$ such that $\sum_{i,j}\Lambda_{i,j}=M\times N-L$ and
\begin{align}
\min_{i,j,\Lambda_{i,j}=0}((Hu)_{i,j}-f_{i,j})^2/2\geq \max_{i,j,\Lambda_{i,j}=1}((Hu)_{i,j}-f_{i,j})^2/2.
\end{align}
The algorithm for blind inpainting using AOP is described below.

\begin{algorithm}[H]
\caption{Proposed blind inpainting using AOP.}\label{alg:AOP}
\begin{algorithmic}
	\State \textbf{Input:} $f$, $\lambda_1$, $L$, $\Lambda^0$, $\epsilon$
	\State \textbf{Initialization:} $k=1$.
	\While{$k< 2$ or $F_2(u^k,\Lambda^k)-F_2(u^{k-1},\Lambda^{k-1})> \epsilon$}
	      	\vspace{0.1cm}
                   \State \mbox{Obtain }$u^k$ \mbox{ by solving (\ref{for:problem_Inpainting})}.
                   \State \mbox{Obtain }$\Lambda^k$ \mbox{ by } (\ref{for:problem_UpdateLambda2}).
		\State $k=k+1$.
	\EndWhile
\end{algorithmic}
\end{algorithm}
Here $\epsilon$, $\Lambda^0$ and $\lambda_1$ are chosen in the same way as algorithm~\ref{alg:TV_BI}. The integer $L$ is an estimation of the number of corrupted pixels, which is easy to obtain from the noise level of the impulse noise.

The difference between these two algorithms is the $\Lambda$-subproblem, the threshold $\lambda_2$ is fixed for algorithm~\ref{alg:TV_BI}, while $\lambda_2$ is changing for AOP. AOP can be considered as one special case of algorithm~\ref{alg:TV_BI} with changing $\lambda_2$. However the performance of these two algorithms is similar, and the parameter $L$ is easier to obtain than $\lambda_2$. So we will only use algorithm~\ref{alg:AOP} for numerical experiments.

\section{Convergence Analysis}
\label{sec:BIconv}
\label{Equivalency}

In this section, we establish some convergence results for these two algorithms. We will show that these two algorithms will stop in finite steps, and the output is a coordinatewise minimum point of $F_1(u,\Lambda)$ (or $F_2(u,\Lambda)$) with relaxed constraint $\Lambda_{i,j}\in[0,1]$. A point $(\tilde{u},\tilde{\Lambda})$ is a coordinatewise minimum point of $F_1(u,\Lambda)$ (or $F_2(u,\Lambda)$) means that $\tilde{u}$ is a minimizer of $F_1(u,\tilde{\Lambda})$ (or $F_2(u,\tilde{\Lambda})$) and $\tilde{\Lambda}$ is a minimizer of $F_1(\tilde{u},\Lambda)$ (or $F_2(\tilde{u},\Lambda)$). In addition, we can modify a little bit in the algorithm and the output will be a local minimizer of $E_0^P(u)$ (or $E_0^C(u)$) (or with probability one).

Since the convergence analysis is similar for both algorithms, let $F(u,\Lambda)$ stand for $F_1(u,\Lambda)$ in the penalty problem~(\ref{for:problem_BI_2a}) and $F_2(u,\Lambda)$ in the constraint problem~(\ref{pro:F_2}), and $E_0(u)$ stand for $E_0^P(u)$ and $E_0^C(u)$ as $\min_{\Lambda\in\{0,1\}^{M\times N}}F(u,\Lambda)$.

Before deriving the convergence results of the algorithms, two theorems are introduced to show that we can solve problems~(\ref{for:problem_BI_2a}) and~(\ref{pro:F_2}) of $u$ and $\Lambda$ to find a local minimizer for $E_0(u)$.

\begin{theorem}\label{lemma1}
If $u^*$ is a local minimizer of $E_0(u)$, then for any $\Lambda^*\in \{0,1\}^{M\times N}$ minimizing $F(u^*,\Lambda)$, $(u^*,\Lambda^*)$ is a local minimizer of $F(u,\Lambda)$. 
\end{theorem}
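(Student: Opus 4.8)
The plan is to show that local minimality of $u^*$ for the eliminated functional $E_0$ forces joint local minimality of the pair $(u^*,\Lambda^*)$, by exploiting the fact that the map $\Lambda$-minimization is ``locally stable'': near $u^*$, the optimal $\Lambda$ can be taken equal to $\Lambda^*$ up to ties. First I would set up notation: write $g_{i,j}(u) = \tfrac12((Hu)_{i,j}-f_{i,j})^2$, so that for $F_1$ the $\Lambda$-objective is $\sum_{i,j}\Lambda_{i,j}g_{i,j}(u) + \lambda_1 J(u) + \lambda_2\sum_{i,j}(1-\Lambda_{i,j})$, and $E_0(u) = \sum_{i,j}\min(g_{i,j}(u),\lambda_2) + \lambda_1 J(u)$. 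Since $\Lambda^*$ minimizes $F(u^*,\Lambda)$, for every pixel we have $\Lambda^*_{i,j}=1$ whenever $g_{i,j}(u^*)<\lambda_2$ and $\Lambda^*_{i,j}=0$ whenever $g_{i,j}(u^*)>\lambda_2$ (ties are free), and in all cases $\Lambda^*_{i,j}g_{i,j}(u^*) + \lambda_2(1-\Lambda^*_{i,j}) = \min(g_{i,j}(u^*),\lambda_2)$, i.e. $F(u^*,\Lambda^*) = E_0(u^*)$.

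Next I would use continuity of $H$ (hence of each $g_{i,j}$) to choose a radius $\rho>0$ so small that for all $u$ with $\|u-u^*\|<\rho$: (i) $E_0(u)\ge E_0(u^*)$ (local minimality of $u^*$), and (ii) the strict inequalities are preserved, namely $g_{i,j}(u)<\lambda_2$ on the set $A:=\{(i,j): g_{i,j}(u^*)<\lambda_2\}$ and $g_{i,j}(u)>\lambda_2$ on $B:=\{(i,j): g_{i,j}(u^*)>\lambda_2\}$. On the tie set $T$ (where $g_{i,j}(u^*)=\lambda_2$) we have $\Lambda^*_{i,j}\in\{0,1\}$ arbitrary but $g_{i,j}(u^*)=\lambda_2$, so $\Lambda^*_{i,j}g_{i,j}(u^*)+\lambda_2(1-\Lambda^*_{i,j})=\lambda_2$ regardless. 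Then take $(u,\Lambda)$ in a neighborhood of $(u^*,\Lambda^*)$; if the neighborhood in $\Lambda$ is small enough and $\Lambda$ is restricted to $\{0,1\}^{M\times N}$, it actually forces $\Lambda=\Lambda^*$ (discreteness), and if $\Lambda\in[0,1]^{M\times N}$ we must argue directly. For each pixel, I would lower-bound $\Lambda_{i,j}g_{i,j}(u) + \lambda_2(1-\Lambda_{i,j})$: on $A$, since $g_{i,j}(u)<\lambda_2$, the expression is $\ge g_{i,j}(u) = \min(g_{i,j}(u),\lambda_2)$; on $B$ it is $\ge \lambda_2 = \min(g_{i,j}(u),\lambda_2)$; on $T$ it is $\ge \min(g_{i,j}(u),\lambda_2)$ trivially. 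Summing and adding $\lambda_1 J(u)$ gives $F(u,\Lambda)\ge E_0(u)\ge E_0(u^*) = F(u^*,\Lambda^*)$, which is exactly joint local minimality. For $F_2$ the same argument runs with $\lambda_2$ replaced by the threshold value and the cardinality indicator handled by noting that any $\Lambda$ feasible for the constraint still satisfies $\Lambda_{i,j}g_{i,j}(u)+\iota\ge \min$-type bounds once $\rho$ is small; here one also uses that the strict separation on $A$ and $B$ keeps the count of zeros $\le L$ optimal.

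The main obstacle I expect is the handling of the tie set $T$ together with the relaxed constraint $\Lambda\in[0,1]^{M\times N}$: on $T$, a nearby $u$ may break the equality $g_{i,j}(u^*)=\lambda_2$ in either direction, and a nearby relaxed $\Lambda$ may have $\Lambda_{i,j}\notin\{0,1\}$, so one cannot simply say ``$\Lambda=\Lambda^*$''. The resolution is that the pixelwise bound $\Lambda_{i,j}g_{i,j}(u)+\lambda_2(1-\Lambda_{i,j}) = \lambda_2 + \Lambda_{i,j}(g_{i,j}(u)-\lambda_2)\ge \min(g_{i,j}(u),\lambda_2)$ holds for \emph{every} $\Lambda_{i,j}\in[0,1]$ with no smallness needed — it is just the statement that a convex combination of $g_{i,j}(u)$ and $\lambda_2$ is at least their minimum — so the tie set requires no special care for the inequality $F(u,\Lambda)\ge E_0(u)$. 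Smallness of $\rho$ is only needed to conclude $E_0(u)\ge E_0(u^*)$. Thus the argument does not actually need a neighborhood in the $\Lambda$ variable at all: the inequality $F(u,\Lambda)\ge E_0(u)$ is global in $\Lambda$, and local minimality of $u^*$ for $E_0$ upgrades directly to local minimality of $(u^*,\Lambda^*)$ for $F$. I would present it in this streamlined form, remarking that the corresponding converse-type statement (needed for the algorithm's output) is the content of the next theorem.
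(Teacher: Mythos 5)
Your streamlined argument is correct and is essentially the paper's own proof: the key chain is $F(u,\Lambda)\ge E_0(u)\ge E_0(u^*)=F(u^*,\Lambda^*)$, where the first inequality is global in $\Lambda$ (by definition of $E_0$ as the minimum over $\Lambda$, or via your convex-combination bound in the relaxed case) and only the middle inequality uses the neighborhood of $u^*$. The preliminary machinery with the sets $A$, $B$, $T$ and continuity of $H$ is unnecessary here, as you correctly conclude; that apparatus is what the paper uses for the converse-type Theorem 2 instead.
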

\begin{proof}
Since $u^*$ is a local minimizer of $E_0(u)$, we can find $\epsilon>0$ such that for all $u$ satisfying $\|u-u^*\|<\epsilon$, we have $E_0(u)\geq E_0(u^*)$. Therefore for all $(u,\Lambda)$ satisfying $\|(u,\Lambda)-(u^*,\Lambda^*)\|<\epsilon$, we have $\|u-u^*\|<\epsilon$. 
\begin{align*}
F(u,\Lambda) \geq E_0(u) \geq E_0(u^*) = F(u^*,\Lambda^*).
\end{align*}
Thus $(u^*,\Lambda^*)$ is a local minimizer of $F(u,\Lambda)$.
\end{proof}

To obtain a sufficient condition for $u^*$ being a local minimizer of $E_0(u)$, we have the following theorem.
\begin{theorem}\label{sufficient_minimum}
Given fixed $u^*$, if for all $\bar{\Lambda}\in \{0,1\}^{M\times N}$ minimizing $F(u^*,\Lambda)$, we also have that $u^*$ minimizing $F(u,\bar{\Lambda})$, then $u^*$ is a local minimizer of $E_0(u)$.
\end{theorem}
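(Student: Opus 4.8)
The statement is essentially the converse of Theorem \ref{lemma1}, so I would try to build the proof around the same relation $F(u,\Lambda)\ge E_0(u)$ together with the fact that $E_0(u)=\min_{\Lambda\in\{0,1\}^{M\times N}}F(u,\Lambda)$ is attained for every $u$. The plan is to show that, under the hypothesis, $E_0$ fails to decrease in a neighbourhood of $u^*$. The key combinatorial observation is that $\{0,1\}^{M\times N}$ is a finite set, so only finitely many values of $\Lambda$ can ever be minimizers of $F(u^*,\Lambda)$, and — more importantly — for $u$ close enough to $u^*$ the minimizing $\Lambda$ for $F(u,\cdot)$ can be taken from this finite ``active'' set.

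First I would fix notation: let $\mathcal{S}=\{\Lambda\in\{0,1\}^{M\times N}:F(u^*,\Lambda)=E_0(u^*)\}$ be the (finite, nonempty) set of optimal $\Lambda$'s at $u^*$, and let $\mathcal{S}^c$ be the complementary set of $\Lambda$'s in $\{0,1\}^{M\times N}$, each of which satisfies $F(u^*,\Lambda)>E_0(u^*)$; set $\gamma=\min_{\Lambda\in\mathcal{S}^c}\bigl(F(u^*,\Lambda)-E_0(u^*)\bigr)>0$ (if $\mathcal{S}^c$ is empty, any $\Lambda$ works and the argument only gets easier). Second, I would use continuity of $F(\cdot,\Lambda)$ in $u$ for each fixed $\Lambda$ — this follows because $H$ is continuous, $J$ is the regularizer (lower semicontinuous / continuous in the relevant sense), and the quadratic fidelity term is continuous — to choose $\epsilon_1>0$ such that for $\|u-u^*\|<\epsilon_1$ and every $\Lambda$ in the finite set $\{0,1\}^{M\times N}$, $|F(u,\Lambda)-F(u^*,\Lambda)|<\gamma/2$. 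For such $u$, any $\Lambda\in\mathcal{S}^c$ has $F(u,\Lambda)>E_0(u^*)-\gamma/2+\gamma = E_0(u^*)+\gamma/2$, whereas picking any fixed $\Lambda_0\in\mathcal{S}$ gives $F(u,\Lambda_0)<E_0(u^*)+\gamma/2$; hence $E_0(u)=\min_\Lambda F(u,\Lambda)$ is attained at some $\Lambda\in\mathcal{S}$, i.e. $E_0(u)=\min_{\Lambda\in\mathcal{S}}F(u,\Lambda)$ on this neighbourhood.

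Third, I would invoke the hypothesis: every $\bar\Lambda\in\mathcal{S}$ has $u^*$ minimizing $F(u,\bar\Lambda)$ over $u$, so there is $\epsilon_{\bar\Lambda}>0$ with $F(u,\bar\Lambda)\ge F(u^*,\bar\Lambda)=E_0(u^*)$ whenever $\|u-u^*\|<\epsilon_{\bar\Lambda}$. Taking $\epsilon=\min\bigl(\epsilon_1,\min_{\bar\Lambda\in\mathcal{S}}\epsilon_{\bar\Lambda}\bigr)>0$ — a finite min over the finite set $\mathcal{S}$, hence strictly positive — we get, for all $u$ with $\|u-u^*\|<\epsilon$,
\begin{align*}
E_0(u)=\min_{\bar\Lambda\in\mathcal{S}}F(u,\bar\Lambda)\ge E_0(u^*),
\end{align*}
which is exactly the assertion that $u^*$ is a local minimizer of $E_0$.

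The main obstacle I anticipate is the second step: one has to be sure that the minimizing $\Lambda$ for nearby $u$ can genuinely be restricted to the active set $\mathcal{S}$, and that requires both the finiteness of $\{0,1\}^{M\times N}$ and a uniform-in-$\Lambda$ continuity estimate for $F(\cdot,\Lambda)$ at $u^*$ (uniform is automatic here precisely because there are finitely many $\Lambda$). If the relaxed version $\Lambda\in[0,1]^{M\times N}$ were used instead, this finiteness would be lost and the argument would need a compactness/separation replacement; but since the paper has already noted $\min_{\Lambda\in\{0,1\}^{M\times N}}F(u,\Lambda)=\min_{\Lambda\in[0,1]^{M\times N}}F(u,\Lambda)=E_0(u)$, it suffices to work with the finite set throughout. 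A minor secondary point is handling the degenerate ties (the ``$t$ is $0$ or $1$'' case), but these only enlarge $\mathcal{S}$ by finitely many elements and do not affect the argument.
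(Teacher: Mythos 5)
Your proposal is correct, and its logical skeleton is the same as the paper's: both arguments reduce to showing that for $u$ in a small neighbourhood of $u^*$, any $\Lambda$ attaining $E_0(u)=\min_\Lambda F(u,\Lambda)$ is also a minimizer of $F(u^*,\Lambda)$, and then invoking the hypothesis to get $E_0(u)=F(u,\bar\Lambda)\geq F(u^*,\bar\Lambda)=E_0(u^*)$. Where you differ is in how that containment is established. The paper uses the explicit thresholding characterization of the $\Lambda$-subproblem: it defines $\Omega_+=\{(i,j):|f_{i,j}-(Hu^*)_{i,j}|^2>2\lambda_2\}$ and $\Omega_-$ analogously, and uses continuity of $H$ to find a neighbourhood on which these strict inequalities persist, so the optimal $\bar\Lambda$ for $u$ has the forced pattern ($0$ on $\Omega_+$, $1$ on $\Omega_-$) and is therefore optimal for $u^*$ as well; the constraint problem $F_2$ is handled by the remark that $\lambda_2$ is replaced by the $L$-th largest residual. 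You instead run an abstract argument: finiteness of $\{0,1\}^{M\times N}$ gives a strictly positive optimality gap $\gamma$ for the inactive $\Lambda$'s, and uniform continuity of $F(\cdot,\Lambda)$ over this finite set pins the argmin to the active set $\mathcal{S}$ near $u^*$. Your route is cleaner in that it treats $F_1$ and $F_2$ uniformly and never needs the explicit form of the $\Lambda$-update, but it pays two small prices the paper's version avoids: (i) you need continuity of $J$ at $u^*$ to get $|F(u,\Lambda)-F(u^*,\Lambda)|<\gamma/2$ (harmless for TV in finite dimensions, and in fact avoidable by noting that $\lambda_1 J(u)$ is common to all $\Lambda$ and cancels from the comparison that determines the argmin), and (ii) for $F_2$ the infeasible $\Lambda$'s have $F\equiv+\infty$, so the gap and continuity statements should be restricted to the feasible (finite-valued) $\Lambda$'s — a one-line fix, since those $\Lambda$'s are never argmins for any $u$. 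Neither point is a genuine gap.
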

\begin{proof}We will prove it for the penalty problem~(\ref{for:problem_BI_2a}) with $F(u,\Lambda)=F_1(u,\Lambda)$ first. Let ${\Omega_+}= \{(i,j): |f_{i,j}-(Hu^*)_{i,j}|^2> 2\lambda_2\}$ and ${\Omega_-}= \{(i,j): |f_{i,j}-(Hu^*)_{i,j}|^2< 2\lambda_2\}$, then from the continuity of $H$, we can find $\epsilon>0$ such that when $\|u-u^*\|<\epsilon$, we have $|f_{i,j}-(Hu)_{i,j}|^2> 2\lambda_2$ for all $(i,j)\in{\Omega_+}$ and $|f_{i,j}-(Hu)_{i,j}|^2< 2\lambda_2$ for all $(i,j)\in{\Omega_-}$. Notice that $E_0(u)=\min_{\Lambda\in\{0,1\}^{M\times N}}F(u,\Lambda)$, then there exists $\bar{\Lambda}\in\{0,1\}^{M\times N}$ such that $E_0(u)=F(u,\bar{\Lambda})$. We have $\bar{\Lambda}_{i,j}=0$ when $(i,j)\in\Omega_+$ and $\bar{\Lambda}_{i,j}=1$ when $(i,j)\in\Omega_-$. Thus $\bar{\Lambda}$ is also a minimizer of $F_1(u^*,\Lambda)$. In addition, we have $E_0(u^*)=F(u^*,\bar{\Lambda})\leq F(u,\bar{\Lambda}) = F(u)$. Therefore, $u^*$ is a local minimizer of $E_0(u)$.

For the constraint problem~(\ref{pro:F_2}) with $F(u,\Lambda)=F_2(u,\Lambda)$, we have to just replace $\lambda_2$ with the $L^{th}$ largest term of $\{((Hu)_{i,j}-f_{i,j})^2\}_{i=1,j=1}^{M,N}$, and the result follows.
\end{proof}

{\bf Remark:} We can replace the $\{0,1\}^{M\times N}$ with the relaxed version $[0,1]^{M\times N}$ and both theorems are still valid.

With these theorems, we are ready to shown the convergence results of the two algorithms.

\begin{theorem}
Both algorithms will converge in finite steps and the output $(u^*, \Lambda^*)$ is a coordinatewise minimum point of $F(u,\Lambda)$.
\end{theorem}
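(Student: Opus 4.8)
The plan is to establish the two claims separately: finite termination, and then the coordinatewise-minimum property of the output. Throughout I use the unified notation $F(u,\Lambda)$ for $F_1$ or $F_2$ and $E_0(u)=\min_{\Lambda}F(u,\Lambda)$ over binary $\Lambda$, as set up just above the statement. The key structural fact is that each algorithm is an alternating (block-coordinate) minimization: given $\Lambda^{k-1}$, the step $u^k=\arg\min_u F(u,\Lambda^{k-1})$ is solved exactly in (\ref{for:problem_Inpainting}), and given $u^k$, the step $\Lambda^k=\arg\min_\Lambda F(u^k,\Lambda)$ is solved exactly in one shot by (\ref{for:problem_UpdateLambda}) or (\ref{for:problem_UpdateLambda2}). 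Hence the sequence of function values $F(u^k,\Lambda^k)$ is monotonically nonincreasing; in fact $F(u^k,\Lambda^{k-1})\le F(u^{k-1},\Lambda^{k-1})$ and $F(u^k,\Lambda^k)\le F(u^k,\Lambda^{k-1})$.

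First I would prove finite termination. The only free combinatorial object is $\Lambda\in\{0,1\}^{M\times N}$, which takes finitely many values. Given a fixed $\Lambda$, the $u$-subproblem (\ref{for:problem_Inpainting}) is a convex problem whose optimal value is a well-defined number; write this optimal value as $\phi(\Lambda)$. Because $u^k$ is optimal for $\Lambda^{k-1}$, we have $F(u^k,\Lambda^{k-1})=\phi(\Lambda^{k-1})$, so after the first iteration the running value $F(u^k,\Lambda^k)$ lies in the finite set $\{\phi(\Lambda):\Lambda\in\{0,1\}^{M\times N}\}$ (using $F(u^k,\Lambda^k)\le F(u^k,\Lambda^{k-1})=\phi(\Lambda^{k-1})$ together with $F(u^k,\Lambda^k)\ge\min_\Lambda F(u^k,\Lambda)$; more directly, once $u^k$ is fixed the updated $\Lambda^k$ makes $F(u^k,\Lambda^k)=E_0(u^k)$, and $E_0(u^k)\ge\phi(\Lambda^k)$ with equality-producing $u^{k+1}$, so values collapse onto $\{\phi(\Lambda)\}$). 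Since this set is finite and the values are nonincreasing, the sequence $F(u^k,\Lambda^k)$ is eventually constant; once two consecutive values coincide the stopping test $F(u^k,\Lambda^k)-F(u^{k-1},\Lambda^{k-1})>\epsilon$ fails for any $\epsilon>0$ (in fact the difference is $\le 0$), so the algorithm halts. I should be slightly careful that the inner $u$-solve is assumed exact; I will state that as part of the hypothesis, matching how the subproblems are described.

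Next I would prove the output $(u^*,\Lambda^*)$ is a coordinatewise minimum point. At termination we have $F(u^*,\Lambda^*)=F(u^{k-1},\Lambda^{k-1})$ where $u^*=u^k$ was produced as an exact minimizer of $F(\cdot,\Lambda^{k-1})$ and $\Lambda^*=\Lambda^k$ as an exact minimizer of $F(u^k,\cdot)$. The second condition — $\Lambda^*$ minimizes $F(u^*,\Lambda)$ — holds by construction of the $\Lambda$-update. For the first condition — $u^*$ minimizes $F(u,\Lambda^*)$ — the point is that the function value did not strictly decrease over the last full sweep: $F(u^*,\Lambda^*)=F(u^{k-1},\Lambda^{k-1})\ge F(u^k,\Lambda^{k-1})=F(u^*,\Lambda^{k-1})\ge F(u^*,\Lambda^*)$, so all these are equal; in particular $F(u^*,\Lambda^{k-1})=F(u^*,\Lambda^*)$, meaning $\Lambda^*$ is also optimal in $\Lambda$ at $u^*$ with the same value, and since $u^k=u^*$ minimizes $F(\cdot,\Lambda^{k-1})$ and $F(u^*,\Lambda^{k-1})=F(u^*,\Lambda^*)$, one gets $\min_u F(u,\Lambda^*)\ge \min_u F(u,\Lambda^{k-1})$... here I must be a touch careful, because optimality of $u^*$ for $\Lambda^{k-1}$ does not immediately transfer to $\Lambda^*$. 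The clean way is: run one more (virtual) iteration — solve $u^{k+1}=\arg\min_u F(u,\Lambda^*)$; then $F(u^{k+1},\Lambda^*)\le F(u^*,\Lambda^*)$, and since the algorithm has already stopped the value cannot strictly decrease, i.e. $F(u^{k+1},\Lambda^*)=F(u^*,\Lambda^*)$ (otherwise a non-stopping step existed, contradicting that the monotone finite value sequence had stabilized). Hence $u^*$ attains $\min_u F(u,\Lambda^*)$, which is the remaining coordinatewise condition. I will phrase the write-up in this "one extra step" style since it avoids the transfer-of-optimality subtlety.

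The main obstacle, and the place to be careful, is exactly this last transfer: naively one only knows $u^*$ is optimal for the $\Lambda$ from the previous iterate, not for the final $\Lambda^*$; the resolution leans on the fact that the monotone value sequence lives in a finite set and has stabilized, so no further alternating step can decrease it, which forces stationarity in the remaining block. A secondary minor point is the tie-breaking in (\ref{for:problem_UpdateLambda})/(\ref{for:problem_UpdateLambda2}) when $((Hu^*)_{i,j}-f_{i,j})^2/2=\lambda_2$: any admissible choice still yields an exact minimizer of $F(u^*,\cdot)$, so the coordinatewise-in-$\Lambda$ condition is unaffected, and I would remark on this in passing.
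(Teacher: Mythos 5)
Your proposal is correct and follows essentially the same route as the paper: finite termination from the monotone decrease of $F$ combined with the finiteness of $\{0,1\}^{M\times N}$, and then the sandwich $F(u^k,\Lambda^k)\geq F(u^{k+1},\Lambda^k)\geq F(u^{k+1},\Lambda^{k+1})$ collapsing to equalities at stabilization, which is exactly your ``one extra virtual iteration'' device for transferring optimality of $u^*$ from $\Lambda^{k-1}$ to $\Lambda^*$. The paper's proof is just a terser version of the same argument (it writes the chain with $u^{k+1}$ and reads off $F(u^k,\Lambda^k)=\min_u F(u,\Lambda^k)$ and $F(u^k,\Lambda^k)=\min_\Lambda F(u^k,\Lambda)$), so no substantive difference remains to flag.
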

\begin{proof}
As explained before, though we have $\Lambda_{i,j}\in[0,1]$ for the relaxed problem, we can always force $\Lambda_{i,j}\in\{0,1\}$ during the iterations because for every fixed $\bar{u}$, we can also find a minimizer of $F(\bar{u},\Lambda)$ satisfying $\Lambda_{i,j}\in\{0,1\}$. Since $\Lambda_{i,j}\in\{0,1\}$, there are only finite number of $\Lambda$'s (the total number of different $\Lambda$'s with constraint $\Lambda_{i,j}\in\{0,1\}$ is $2^{M\times N}$) and the algorithm will stop in finite steps if the $u$-subproblem and $\Lambda$-subproblem are solved exactly. Assume that at step $k$, the function $F(u,\Lambda)$ stops decreasing, which means
\begin{align}
F(u^k,\Lambda^k) = F(u^{k+1},\Lambda^{k+1}).
\end{align}
Together with nonincreasing property of the algorithm
\begin{align}
F(u^k,\Lambda^k) \geq F(u^{k+1},\Lambda^k) \geq F(u^{k+1},\Lambda^{k+1}),
\end{align}
we have
\begin{align}
F(u^k,\Lambda^k) = F(u^{k+1},\Lambda^k) = F(u^{k+1},\Lambda^{k+1}).
\end{align}
Thus
\begin{align}
F(u^k,\Lambda^k)&=F(u^{k+1},\Lambda^k)=\min_{u}F(u,\Lambda^k),\\
F(u^k,\Lambda^k)&=\min_{\Lambda}F(u^k,\Lambda).
\end{align}
Then $(u^*,\Lambda^*)=(u^k,\Lambda^k)$ is a coordinatewise minimum point of $F(u,\Lambda)$.
\end{proof}

However, coordinatewise minimum point $(\tilde{u},\tilde{\Lambda})$ may not be a local minimum point of $F(u,\Lambda)$. As shown in the next theorem, $\tilde{\Lambda}$ being the unique minimum point of $F(\tilde{u},\Lambda)$ is a sufficient condition for $(\tilde{u},\tilde{\Lambda}$) to be a local minimum point.

\begin{theorem}\label{localminimumsuff}For a coordinatewise minimum point $(u^*,\Lambda^*)$ of $F(u,\Lambda)$, if $\Lambda^*$ is the unique minimum point of $F(u^*,\Lambda)$, then $(u^*,\Lambda^*)$ is a local minimum point of $F(u,\Lambda)$. Furthermore, $u^*$ is a local minimum point of $E_0(u)$.
\end{theorem}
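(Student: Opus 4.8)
The plan is to derive this statement almost immediately from Theorems~\ref{lemma1} and~\ref{sufficient_minimum}. Observe that the two hypotheses --- $(u^*,\Lambda^*)$ is a coordinatewise minimum of $F$, and $\Lambda^*$ is the \emph{unique} minimizer of $F(u^*,\Lambda)$ --- are exactly what is needed to invoke Theorem~\ref{sufficient_minimum} at the point $u^*$: by uniqueness, the set of $\bar\Lambda\in\{0,1\}^{M\times N}$ minimizing $F(u^*,\Lambda)$ is the singleton $\{\Lambda^*\}$, and $u^*$ minimizes $F(u,\Lambda^*)$ in $u$ because $(u^*,\Lambda^*)$ is a coordinatewise minimum; hence the quantifier ``for all $\bar\Lambda$ minimizing $F(u^*,\Lambda)$'' in Theorem~\ref{sufficient_minimum} collapses to a single $\bar\Lambda=\Lambda^*$ for which the required condition holds. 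So Theorem~\ref{sufficient_minimum} gives that $u^*$ is a local minimizer of $E_0$, which is the ``furthermore'' assertion. Then, since $\Lambda^*$ is in particular a minimizer of $F(u^*,\Lambda)$, Theorem~\ref{lemma1} applied to this $u^*$ and $\Lambda^*$ yields that $(u^*,\Lambda^*)$ is a local minimizer of $F(u,\Lambda)$, the first assertion. The whole proof is thus a two-step reduction, and the only thing to check is this bookkeeping about the collapsing quantifier.

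It is worth also recording the self-contained argument, since it makes transparent why uniqueness is the right hypothesis and where continuity of $H$ enters. Take first the penalty model $F=F_1$. Uniqueness of the minimizer $\Lambda^*$ of $F_1(u^*,\Lambda)$ forces the absence of ties, i.e. $|f_{i,j}-(Hu^*)_{i,j}|^2\neq 2\lambda_2$ for every $(i,j)$, because at a tie both $\Lambda_{i,j}=0$ and $\Lambda_{i,j}=1$ would be optimal, contradicting uniqueness. Thus, in the notation of the proof of Theorem~\ref{sufficient_minimum}, $\Omega=\Omega_+\cup\Omega_-$. By continuity of $H$ there is $\epsilon>0$ such that for $\|u-u^*\|<\epsilon$ the strict inequalities $|f_{i,j}-(Hu)_{i,j}|^2>2\lambda_2$ on $\Omega_+$ and $|f_{i,j}-(Hu)_{i,j}|^2<2\lambda_2$ on $\Omega_-$ persist; hence $\Lambda^*$ is still the unique minimizer of $F_1(u,\Lambda)$ and $E_0(u)=F_1(u,\Lambda^*)$ on this ball. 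Since $u^*$ minimizes $F_1(u,\Lambda^*)$ globally in $u$, we get $E_0(u)=F_1(u,\Lambda^*)\geq F_1(u^*,\Lambda^*)=E_0(u^*)$ for $\|u-u^*\|<\epsilon$, so $u^*$ is a local minimizer of $E_0$; and then $F_1(u,\Lambda)\geq E_0(u)\geq E_0(u^*)=F_1(u^*,\Lambda^*)$ on a neighborhood of $(u^*,\Lambda^*)$ gives local minimality of the pair.

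For the constraint model $F=F_2$ the argument is identical after replacing $2\lambda_2$ by twice the $L$-th largest value of $\{((Hu^*)_{i,j}-f_{i,j})^2/2\}_{i,j}$: uniqueness of $\Lambda^*$ now means the $L$-th and $(L+1)$-th largest such values are strictly separated, and this strict gap is preserved for $\|u-u^*\|<\epsilon$ by continuity of $H$, so $\Lambda^*$ again remains the unique minimizer of $F_2(u,\Lambda)$ nearby. I would also note that uniqueness of $\Lambda^*$ over $\{0,1\}^{M\times N}$ is equivalent to uniqueness over the relaxed domain $[0,1]^{M\times N}$ --- both amount to ``no ties'' --- so the statement does not depend on which version of $F$ one works with, and the Remark following Theorem~\ref{sufficient_minimum} lets us quote either form of Theorems~\ref{lemma1} and~\ref{sufficient_minimum}.

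The main (and essentially only) obstacle is the translation from ``unique minimizer of $F(u^*,\cdot)$'' to ``no ties / strictly separated thresholds''; once that is in hand, the continuity-of-$H$ perturbation argument --- already the engine of Theorem~\ref{sufficient_minimum} --- transfers the separation to a whole neighborhood of $u^*$, and the rest is the one-line chain $F(u,\Lambda)\geq E_0(u)\geq E_0(u^*)=F(u^*,\Lambda^*)$. I expect no real difficulty here, and I would present the short reduction via Theorems~\ref{lemma1} and~\ref{sufficient_minimum} as the actual proof, with the explicit $\epsilon$-argument mentioned only as motivation.
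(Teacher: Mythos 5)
Your first paragraph is exactly the paper's proof: the paper also deduces the ``furthermore'' claim by feeding the uniqueness hypothesis and the coordinatewise minimality of $u^*$ into Theorem~\ref{sufficient_minimum}, and then obtains local minimality of the pair $(u^*,\Lambda^*)$ from Theorem~\ref{lemma1}. The additional self-contained $\epsilon$-argument you sketch is sound but not needed; the short reduction you propose to present is precisely what the paper does.
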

\begin{proof}
{  Since $\Lambda^*$ is the unique minimum point of $F(u^*,\Lambda)$ and $u^*$ minimizes $F(u,\Lambda^*)$, we have $u^*$ being a local minimum point of $E_0(u)$ from theorem~\ref{sufficient_minimum}. Then $(u^*,\Lambda^*)$ being a local minimum point of $F(u,\Lambda)$ follows from theorem~\ref{lemma1}.}
\end{proof}

Let $(u^*,\Lambda^*)$ be a coordinatewise minimum point of $F(u,\Lambda)$. From theorem~\ref{localminimumsuff}, if $u^*$ is not a local minimum point of $E_0(u)$, then there are many minimum points for $F(u^*,\Lambda)$. { In addition, from theorem~\ref{sufficient_minimum}, there exists another minimum point $\bar{\Lambda}$ of $F(u^*,\Lambda)$, such that $F(u^*,\bar{\Lambda})>\min_{u} F(u,\bar{\Lambda})$.}

Based on theorems~\ref{sufficient_minimum} and~\ref{localminimumsuff}, we have the following two corollaries.

\begin{corollary}
When solving the subproblem of finding $\Lambda^k$, if there are many minimum points for $F(u^k,\Lambda)$, we can choose the best $\Lambda$ with lowest $\min_uF(u,\Lambda)$, then the algorithm will stop at a local minimum point of $E_0(u)$. 
\end{corollary}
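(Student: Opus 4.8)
The plan is to reuse the finite-termination argument of the preceding theorem and then upgrade its coordinatewise-minimum conclusion to a local-minimum conclusion for $E_0$ by invoking Theorem~\ref{sufficient_minimum}.

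First I would check that replacing the $\Lambda$-update by the modified rule does not destroy finite termination. The modified rule still returns a minimizer of $F(u^k,\Lambda)$ over $\Lambda\in\{0,1\}^{M\times N}$ (it only breaks ties in a particular way), so $F$ remains nonincreasing along the iterates, there are still only $2^{M\times N}$ admissible binary $\Lambda$'s, and the argument of the previous theorem applies verbatim: the algorithm stabilizes, and at the stopping index $k$ the pair $(u^*,\Lambda^*)=(u^k,\Lambda^k)$ satisfies $F(u^*,\Lambda^*)=\min_u F(u,\Lambda^*)$ and $F(u^*,\Lambda^*)=\min_\Lambda F(u^*,\Lambda)$, i.e.\ it is a coordinatewise minimum point. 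The extra property contributed by the modified rule is that, among all minimizers of $F(u^*,\Lambda)$, the returned $\Lambda^*$ makes $\min_u F(u,\Lambda)$ as small as possible; that is, $\min_u F(u,\Lambda^*)\le \min_u F(u,\bar\Lambda)$ for every $\bar\Lambda$ minimizing $F(u^*,\Lambda)$.

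Next I would show $u^*$ is a local minimizer of $E_0(u)$ by contradiction, using Theorem~\ref{sufficient_minimum}. If $u^*$ were not a local minimizer of $E_0$, then by the contrapositive of Theorem~\ref{sufficient_minimum} (equivalently, the remark preceding this corollary) there is a minimizer $\bar\Lambda\in\{0,1\}^{M\times N}$ of $F(u^*,\Lambda)$ with $F(u^*,\bar\Lambda)>\min_u F(u,\bar\Lambda)$. Since $\bar\Lambda$ and $\Lambda^*$ both minimize $F(u^*,\Lambda)$, we have $F(u^*,\bar\Lambda)=F(u^*,\Lambda^*)$, and the selection property gives $\min_u F(u,\Lambda^*)\le \min_u F(u,\bar\Lambda)<F(u^*,\bar\Lambda)=F(u^*,\Lambda^*)$. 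This contradicts the coordinatewise-minimum identity $\min_u F(u,\Lambda^*)=F(u^*,\Lambda^*)$. Hence $u^*$ is a local minimizer of $E_0(u)$, as claimed; if desired, applying Theorem~\ref{lemma1} then also shows that $(u^*,\Lambda^*)$ is a local minimizer of $F(u,\Lambda)$.

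The only genuinely delicate point is the first one: making sure the tie-breaking rule is compatible with exact solution of the $\Lambda$-subproblem and with the finite-termination/coordinatewise-minimum machinery of the previous theorem; once that is in place, local minimality of $E_0$ at the output is a one-line contradiction built on Theorem~\ref{sufficient_minimum}. I would also note that the argument is identical for $F_1$ and $F_2$, since Theorem~\ref{sufficient_minimum} was established for both (with $\lambda_2$ reinterpreted as the $L$-th largest residual in the constraint case), so no separate treatment of the two algorithms is needed.
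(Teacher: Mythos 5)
Your proposal is correct and follows essentially the same route as the paper: you show that at termination the selection rule forces $F(u^*,\bar{\Lambda})=\min_u F(u,\bar{\Lambda})$ for every $\bar{\Lambda}$ minimizing $F(u^*,\Lambda)$ (otherwise the chosen $\Lambda$ would yield a strictly smaller value of $\min_u F(u,\Lambda)$ and the algorithm would continue), and then invoke Theorem~\ref{sufficient_minimum}. The paper's proof is just a terser statement of this same contradiction argument, so your version only adds the (harmless and in fact welcome) explicit check that the modified tie-breaking rule preserves finite termination and coordinatewise minimality.
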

\begin{proof}When the algorithm stops at $(u^*,\Lambda^*)$, we have $F(u^*,\bar{\Lambda})=\min_uF(u,\bar{\Lambda})$ for all $\bar{\Lambda}\in \{0,1\}^{M\times N}$ minimizing $F(u^*,\Lambda)$, otherwise, we can find a $\Lambda$ giving lower $\min_uF(u,\Lambda)$ and the algorithm continues. Then from theorem~\ref{sufficient_minimum}, we know that $u^*$ is a local minimum point of $E_0(u)$.
\end{proof}

{\bf Remark: }This strategy for choosing $\Lambda$ may not be very useful for practical computation because there could be a large number of candidates for such $\Lambda$. However, in the numerical experiments, this does not happen, because of the rounding error in the calculation and the probability for two values to equal is 0.

Instead of having to choose the best candidates from many candidates, we can modify the function $F(u,\Lambda)$ to avoid this case, as Wang et al. did in~\cite{WangY}. 
\begin{corollary}
If we can modify the objective function $F(u,\Lambda)$ by adding $\tau\sum_{i,j}\Lambda_{i,j}r_{i,j}$, where $\{r_{i,j}\}$ are random values uniformly distributed in $[0,1]$ and $\tau$ is a small number, then the algorithm will stop at a local minimum of $\tilde{E}_0(u)\equiv\min_{\Lambda\in\{0,1\}^{M\times N}}(F(u,\Lambda)+\tau\sum_{i,j}\Lambda_{i,j}r_{i,j})$ with probability one.
\end{corollary}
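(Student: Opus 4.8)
The plan is to reduce the statement to Theorem~\ref{localminimumsuff} applied to the perturbed objective $\tilde{F}(u,\Lambda)\equiv F(u,\Lambda)+\tau\sum_{i,j}\Lambda_{i,j}r_{i,j}$, and then to show that the ``bad'' event --- the one in which $\tilde{F}(u^*,\cdot)$ fails to have a unique minimizer at the termination point $u^*$ --- has probability zero. First I would observe that $\tilde{F}$ has exactly the same structure as $F$: the extra term is linear in $\Lambda$ and independent of $u$, so $\tilde{F}$ is still ``separable in $\Lambda$ plus a convex image term in $u$''. Hence the finite-termination theorem and Theorems~\ref{lemma1}, \ref{sufficient_minimum}, \ref{localminimumsuff} all go through verbatim with $F$ replaced by $\tilde{F}$ and $E_0$ by $\tilde{E}_0$. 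Concretely, in the penalty case the $\Lambda$-update becomes $\Lambda_{i,j}=1$ iff ${1\over2}((Hu)_{i,j}-f_{i,j})^2<\lambda_2-\tau r_{i,j}$, i.e.\ one only replaces the constant threshold $\lambda_2$ by the pixelwise threshold $\lambda_2-\tau r_{i,j}$; in the AOP case one sorts the values ${1\over2}((Hu)_{i,j}-f_{i,j})^2+\tau r_{i,j}$ and keeps the $M\times N-L$ smallest. So it suffices to prove: with probability one, when the algorithm terminates at $(u^*,\Lambda^*)$, the matrix $\Lambda^*$ is the \emph{unique} minimizer of $\tilde{F}(u^*,\Lambda)$; then Theorem~\ref{localminimumsuff} for $\tilde{F}$ gives that $u^*$ is a local minimum of $\tilde{E}_0(u)$.

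For the probabilistic step, the key structural point is that whenever $\Lambda$ is fixed, the $u$-subproblem~(\ref{for:problem_Inpainting}) is a convex inpainting problem that does not involve $r$ at all. Under the standing assumption that this subproblem has a unique solution $u(\Lambda)$ (e.g.\ $J$ strictly convex, or $H$ injective on the relevant coordinates), every image the algorithm can ever output lies in the finite, \emph{deterministic} set $\mathcal{U}=\{u(\Lambda):\Lambda\in\{0,1\}^{M\times N}\}$, which has at most $2^{MN}$ elements and, crucially, does not depend on the draw $\{r_{i,j}\}$. Now fix $u\in\mathcal{U}$: the minimizer of $\tilde{F}(u,\cdot)$ fails to be unique only if some pixel lies exactly on the relevant threshold --- in the penalty case ${1\over2}((Hu)_{i,j}-f_{i,j})^2=\lambda_2-\tau r_{i,j}$ for some $(i,j)$, and in the AOP case ${1\over2}((Hu)_{i,j}-f_{i,j})^2+\tau r_{i,j}={1\over2}((Hu)_{k,l}-f_{k,l})^2+\tau r_{k,l}$ for two distinct pixels. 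Each such equality pins a single coordinate $r_{i,j}$, respectively a difference $r_{i,j}-r_{k,l}$, to a fixed constant, hence is an event of probability zero because the $r_{i,j}$ are independent and continuously distributed. Taking the union over the finitely many $u\in\mathcal{U}$ and the finitely many pixels/pixel pairs, the total bad event still has probability zero; on its complement $\Lambda^*$ is the unique minimizer of $\tilde{F}(u^*,\cdot)$, which completes the argument.

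The step I expect to need the most care is the decoupling of $u^*$ from the randomness: a priori $u^*$ is itself a random object, produced by an algorithm that reads $r$, so one cannot naively ``condition on $u^*$ and apply a zero-measure argument.'' The resolution is precisely the observation above that the candidate termination images lie in the fixed finite set $\mathcal{U}$ determined only by $f$, $\lambda_1$, $J$, $H$ (this is also what makes the finite-termination theorem applicable in the first place), which legitimizes the union bound. If one does not wish to assume uniqueness of the inpainting solution, one instead fixes a deterministic solver/tie-breaking rule for the $u$-subproblem and notes $\mathcal{U}$ is still finite; I would rather state uniqueness as a hypothesis to keep this clean. Finally, note that $\tau$ being small plays no role in the proof --- the corollary concerns the perturbed function $\tilde{E}_0$ --- and is only needed so that $\tilde{E}_0$ stays close to $E_0$ in practice.
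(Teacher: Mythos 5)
Your proposal is correct and follows the same basic route as the paper: perturb the objective, observe that the $\Lambda$-update only sees the shifted thresholds $\lambda_2-\tau r_{i,j}$ (or the perturbed sorted values in the AOP case), argue that ties occur with probability zero so the $\Lambda$-minimizer at termination is unique, and then invoke Theorem~\ref{localminimumsuff}. The paper's own proof is essentially a two-sentence version of this: it writes down the perturbed $\Lambda$-update and asserts that the event $\left((Hu)_{i,j}-f_{i,j}\right)^2/2+\tau r_{i,j}=\lambda_2$ has probability zero ``because of the randomness of $r_{i,j}$.'' What you add, and what the paper glosses over, is the genuinely delicate point you flag yourself: the $u$ appearing in that tie condition is the algorithm's iterate, which is itself a function of the draw $\{r_{i,j}\}$, so one cannot directly apply a fixed-$u$ zero-measure argument to it. Your resolution --- every iterate lies in the deterministic finite set $\mathcal{U}=\{u(\Lambda):\Lambda\in\{0,1\}^{M\times N}\}$ determined only by $f$, $H$, $J$, $\lambda_1$ (given a unique or deterministically tie-broken solution of the $u$-subproblem~(\ref{for:problem_Inpainting})), so a union bound over at most $2^{MN}$ candidates and finitely many pixels or pixel pairs still yields a null bad event --- is exactly the missing justification, and it is the same finiteness that underlies the termination theorem, so it costs nothing extra. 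The price is the explicit uniqueness (or fixed-solver) hypothesis on the $u$-subproblem, which the paper never states but implicitly relies on throughout; making it explicit is the cleaner choice. Your closing remark that the smallness of $\tau$ is irrelevant to the proof and only matters for keeping $\tilde{E}_0$ close to $E_0$ is also accurate.
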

\begin{proof}
In this case, the subproblem for updating $\Lambda$ is 
\begin{align}
 \Lambda_{i,j}=\left\{\begin{array}{cc}
                      0&\mbox{ if }\left((Hu)_{i,j}-f_{i,j}\right)^2/2+\tau r_{i,j}> \lambda_2,\\
                      0\mbox{ or }1&\mbox{ if }\left((Hu)_{i,j}-f_{i,j}\right)^2/2 +\tau r_{i,j}=\lambda_2,\\
		      1&\mbox{ if }\left((Hu)_{i,j}-f_{i,j}\right)^2/2 +\tau r_{i,j}< \lambda_2.\\
                     \end{array}
\right.
\end{align}
The probability of getting $\left((Hu)_{i,j}-f_{i,j}\right)^2/2 +\tau r_{i,j}=\lambda_2$ is 0 because of the randomness of $r_{i,j}$. Then the algorithm will converge to a local minimum of $\tilde{E}_0(u)$ with probability one. Similarly for AOP, the probability of $L^{th}$ and $(L+1)^{th}$ largest term being equal is 0.
\end{proof}

{\bf Remark: }Besides adding additional term onto $F(u,\Lambda)$ we can also add a small random variable $\tau\Lambda_{i,j} r_{i,j}$ onto $f_{i,j}$. 

\section{Numerical Experiments}
\label{sec:Experiments}

Because the performance of these two algorithms is similar and it is easier to obtain an approximate value for the number of damaged pixels, in this section, we apply the blind inpainting algorithm using AOP to remove impulse noise and mixed Gaussian impulse noise. 

As mentioned in the introduction, there are many different choices for $J(x)$, and the performance of this algorithm depends on which $J(x)$ is chosen. In order to make the comparison with other methods fair, we choose $J(x)$ to be total variation for all methods used in the numerical experiments. Therefore the optimization problem will be different from the ones that proposed in the literature. For example, we implement the algorithm of paper~\cite{BlindInpainingDong} by replacing the wavelet frame with total variation~\cite{6100364}.

If the regularization for image $u$ is total variation, and $H$ is the identity operator, the step for finding $u$ is
\begin{align}
\Min\limits_{u}\sum_{i,j}{1\over2}\Lambda_{i,j}(u_{i,j}-f_{i,j})^2+\lambda_1 \mbox{TV}(u),
\end{align}
which is the famous TV inpainting model \cite{ChaS02,ChaS05b}. Numerous algorithms proposed for solving TV denoising problem can be adopted to solve this TV inpainting problem with some necessary modifications. Some examples are algorithms based on duality \cite{Cham04,ZhWC10}, augmented Lagrangian methods \cite{TaiW09,WuTa10}, and split Bregman iterations \cite{GolO09,tvreg}. In the numerical experiments, we choose the split Bregman iteration to solve the subproblem.

To evaluate the quality of the restoration results, peak signal to noise ratio (PSNR) is employed. Given an image $u\in[0,255]^{M\times N}$, the PSNR of the restoration result $\hat{u}$ is defined as follows:
\begin{align}
 \mbox{PSNR}(\hat{u},u)=10\log_{10}{255^2\over {1\over MN}\sum\limits_{i,j}(\hat{u}_{i,j}-u_{i,j})^2}.
\end{align}

There are two important types of impulse noise: salt-and-pepper impulse noise and random-valued impulse noise. The pixels damaged by salt-and-pepper impulse noise are much easier to find since the values are either $d_{\mbox{\tiny min}}$ or $d_{\mbox{\tiny max}}$. The adaptive median filter (AMF) has been widely used to accurately identify most pixels damaged by salt-and-pepper impulse noise (See e.g. \cite{HwaH95,GonW01}). The detection of pixels corrupted by random-valued impulse noise is much harder than salt-and-pepper impulse noise because the value of damaged pixels can be any number between  $d_{\mbox{\tiny min}}$ and $d_{\mbox{\tiny max}}$. ACWMF was proposed to detect pixels damaged by random-valued impulse noise.

For the first experiment, salt-and-pepper impulse noise is considered. Because the pixels corrupted by this kind of impulse noise can only take two values, the detection of damaged pixels is easy. As an efficient method for detecting the damaged pixels, AMF is used widely in salt-and-pepper impulse noise removal. We will compare total variation blind inpainting using AOP with AMF and TVL1, where TVL1 is the result of solving the following problem,
\begin{align}\label{for:problem_TV_l1}
 \Min\limits_{u}\sum_{i,j}|u_{i,j}-f_{i,j}|+\lambda \ \mbox{TV}(u),
\end{align}
using split Bregman~\cite{tvreg}. The parameter $\lambda$ is tuned to achieve the best quality of the restoration images. 

\iffalse
\else
Four test images are corrupted by Gaussian noise of zero mean and standard deviations $\sigma = 5, 10, 15$, then we add salt-and-pepper impulse noise with different levels ($s=30\%,50\%,70\%$) on the test images, with or without the Gaussian noise. The PSNR values of the results from three methods are summarized in Table~\ref{tab:Denoise_IN}.

\begin{table*}[!h]
\footnotesize
\begin{center}
\begin{tabular}{|r|cccc|cccc|}\hline
\multicolumn{9}{|c|}{\bf Salt-and-Pepper Impulse Noise}\\\hline
\multirow{2}{*}{$\sigma+s$}&\multicolumn{4}{|c|}{``Lena''} & \multicolumn{4}{|c|}{``House''} \\\cline{2-9}
    & Noisy & AMF & TVL1 & AOP &Noisy & AMF & TVL1 & AOP \\\hline
    0+30\%& 10.68 & 33.80 & 30.97 & {\bf 37.75}& 10.42 & 38.97 & 36.53 & {\bf 47.14}\\\hline
 5+30\%  & 10.66 & 31.47 & 30.32 & {\bf 34.56} & 10.40 & 33.69 & 34.49 & {\bf 39.09}  \\\hline
 10+30\% & 10.62 & 27.93 & 29.40 & {\bf 32.25} & 10.39 & 28.90 & 32.79 & {\bf 35.73}  \\\hline
 15+30\% & 10.54 & 25.14 & 28.59 & {\bf 30.41} & 10.30 & 25.66 & 31.41 & {\bf 33.49}  \\\hline
    0+50\%& 8.44 & 30.35 & 27.98& {\bf 33.98}& 8.22 & 34.60 & 31.70 & {\bf 42.50}\\\hline
 5+50\%  &  8.45 & 29.00 & 27.58 & {\bf 32.61} & 8.19  & 31.73 & 31.36 & {\bf 37.43}  \\\hline
 10+50\% &  8.42 & 26.54 & 27.25 & {\bf 30.88} & 8.18  & 27.84 & 30.37 & {\bf 34.60}  \\\hline
 15+50\% &  8.40 & 24.15 & 26.46 & {\bf 29.50} & 8.14  & 24.96 & 29.60 & {\bf 32.50}  \\\hline
    0+70\%& 7.00 & 26.85 &24.90 & {\bf 30.61}& 6.75 & 30.05 & 26.80 & {\bf 36.84}\\\hline
 5+70\%  &  6.97 & 26.11 & 24.65 & {\bf 29.94} & 6.73  & 28.80 & 26.81 & {\bf 34.44}  \\\hline
 10+70\% &  6.98 & 24.62 & 24.57 & {\bf 29.05} & 6.74  & 26.15 & 26.36 & {\bf 32.28}  \\\hline
 15+70\% &  6.97 & 22.83 & 24.20 & {\bf 28.11} & 6.72  & 23.86 & 25.85 & {\bf 31.23}  \\\hline
 &\multicolumn{4}{|c|}{``Cameraman''} & \multicolumn{4}{|c|}{``Boat''} \\\cline{2-9}
    & Noisy & AMF & TVL1 & AOP &Noisy & AMF & TVL1 & AOP  \\\hline
     0+30\%& 10.32 & 33.62 & 30.43 & {\bf 38.43}&10.70 & 30.16&27.64 & {\bf 33.32}\\\hline
5+30\%  & 10.28 & 31.34 & 30.09 &{\bf 35.33}  & 10.69 & 30.27 & 27.84 & {\bf 33.39}\\\hline
 10+30\% & 10.25 & 28.15 & 29.35 &{\bf 32.47}  & 10.69 & 30.34 & 27.70 & {\bf 33.06}\\\hline
 15+30\% & 10.21 & 25.39 & 28.40 &{\bf 30.45}  & 10.68 & 30.24 & 27.70 & {\bf 32.96}\\\hline
    0+50\%& 8.08 & 29.78 & 26.80& {\bf 34.58}& 8.49& 27.27& 25.00& {\bf 30.54}\\\hline
 5+50\%  &  8.08 & 28.35 & 26.55 &{\bf 32.78}  & 8.51  & 27.20 & 25.02 & {\bf 30.49}\\\hline
 10+50\% &  8.09 & 26.38 & 26.16 &{\bf 30.57}  & 8.48  & 27.24 & 25.24 & {\bf 30.19}\\\hline
 15+50\% &  8.04 & 24.29 & 26.09 &{\bf 29.08}  & 8.49  & 27.12 & 25.02 & {\bf 30.12}\\\hline
     0+70\%& 6.62 & 25.73 & 23.20& {\bf 29.85}& 7.02&24.33 & 22.42& {\bf 27.20}\\\hline
5+70\%  &  6.62 & 25.22 & 23.21 &{\bf 29.06}  & 7.02  & 24.19 & 22.35 & {\bf 27.14}\\\hline
 10+70\% &  6.60 & 24.02 & 23.01 &{\bf 28.04}  & 7.02  & 24.18 & 22.42 & {\bf 27.08}\\\hline
 15+70\% &  6.60 & 22.48 & 22.66 &{\bf 26.96}  & 7.01  & 24.23 & 22.37 & {\bf 26.97}\\\hline
\end{tabular}
\caption{PSNR(dB) for denoising results of different algorithms for noisy images corrupted by salt-and-pepper
impulse noise and mixed Gaussian impulse noise. $\sigma$ is the standard deviation for the Gaussian noise and $s$ is the level of salt-and-pepper impulse noise.}
\label{tab:Denoise_IN}
\end{center}
\end{table*}

From Table \ref{tab:Denoise_IN}, we can see that for salt-and-pepper impulse noise, the results from total variation blind inpainting using AOP are better than those by AMF and TVL1 for all noise levels. The visual comparison of some results is shown in Fig.~\ref{fig:Denoise_10_30}. We can see noisy artifacts in the background of the images obtained by AMF, and the images obtained by TVL1 are blurred with some lost details. Images restored by total variation blind inpainting using AOP are smooth in flat regions of the background and the details are kept. 

We do not compare AOP with two-stage approaches because the detection of damaged pixels by salt-and-pepper impulse noise using AMF is very accurate, and $\Lambda$ will not change too much during the iterations, thus the performance of AOP will be similar to the two-stage approach by first detecting the damaged pixels by AMF and then solve the total variation image inpainting problem, which is just the first iteration of AOP. Therefore, for the cases with salt-and-pepper impulse noise, using blind inpainting does not improve too much by adaptively updating $\Lambda$, and our focus will be on random-valued impulse noise.

\begin{figure*}[!h]
\begin{center}
{\includegraphics[width=0.24\linewidth]{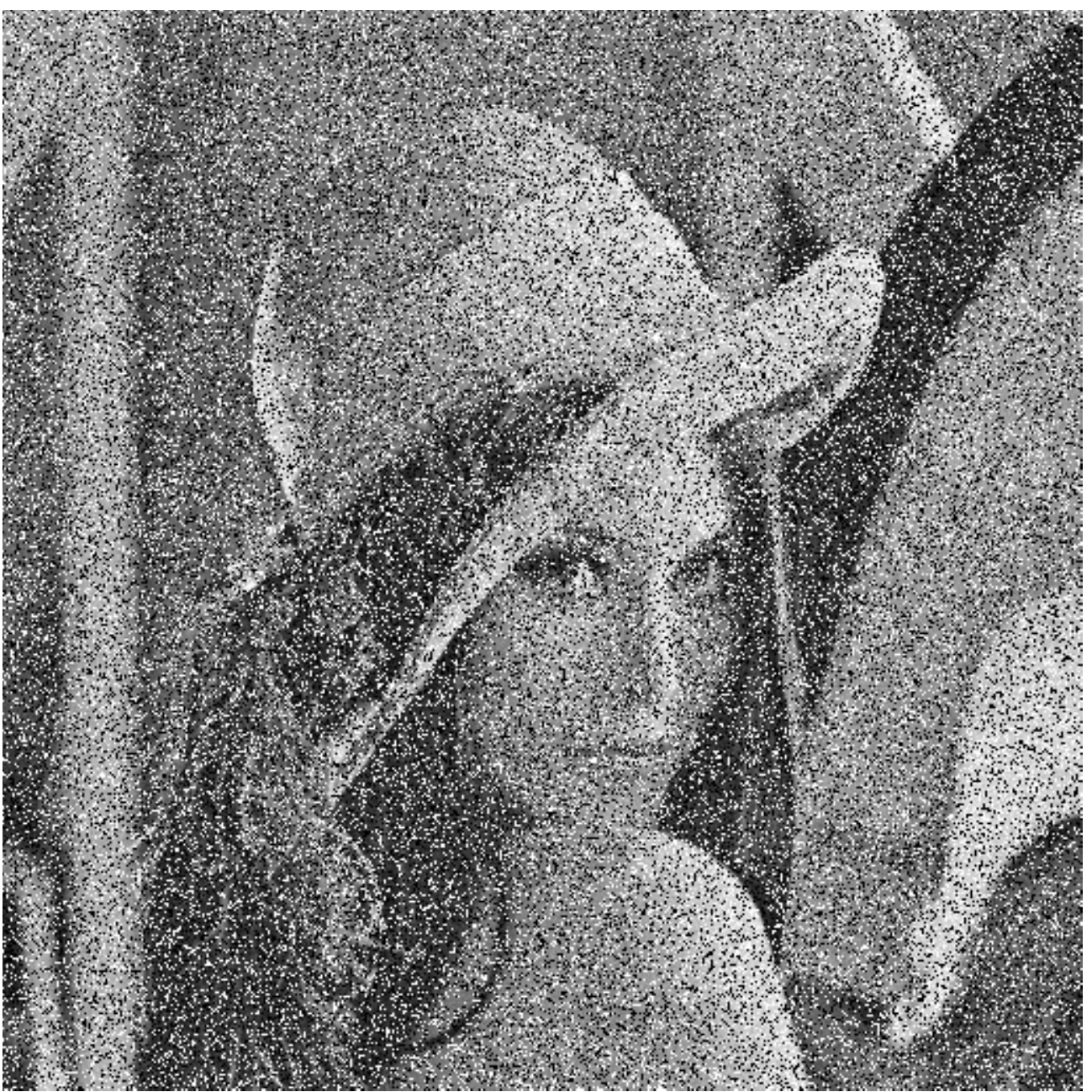}}
{\includegraphics[width=0.24\linewidth]{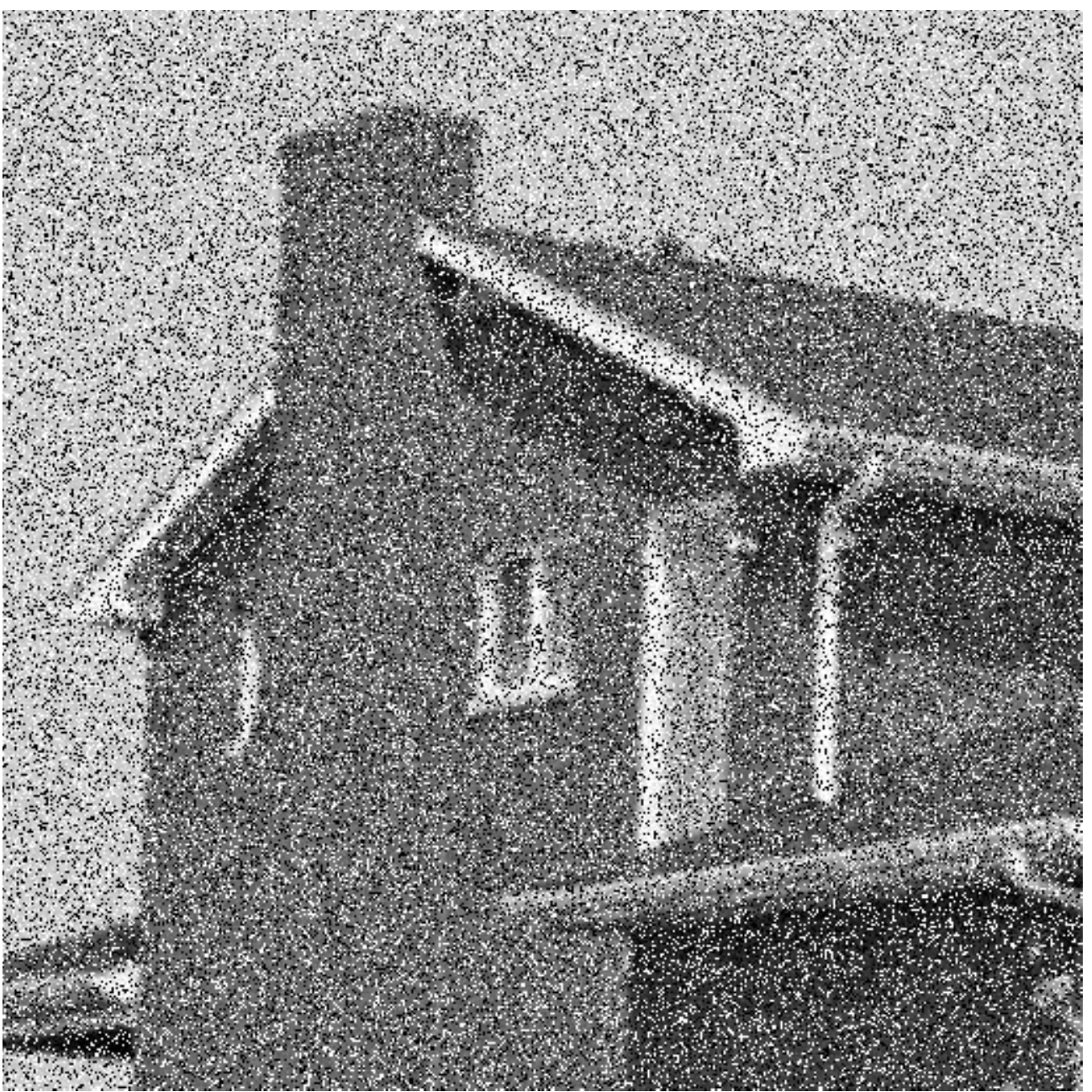}}
{\includegraphics[width=0.24\linewidth]{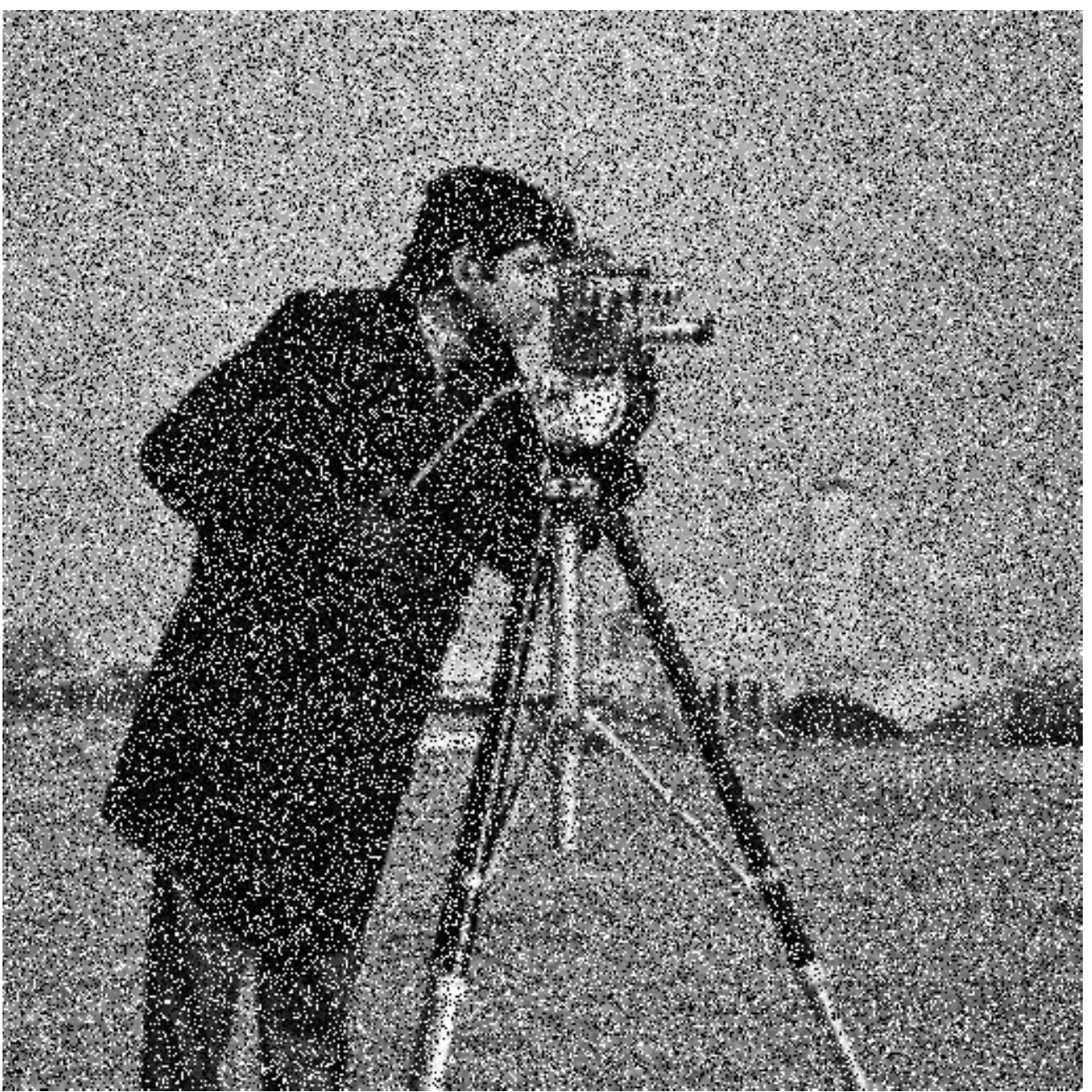}}
{\includegraphics[width=0.24\linewidth]{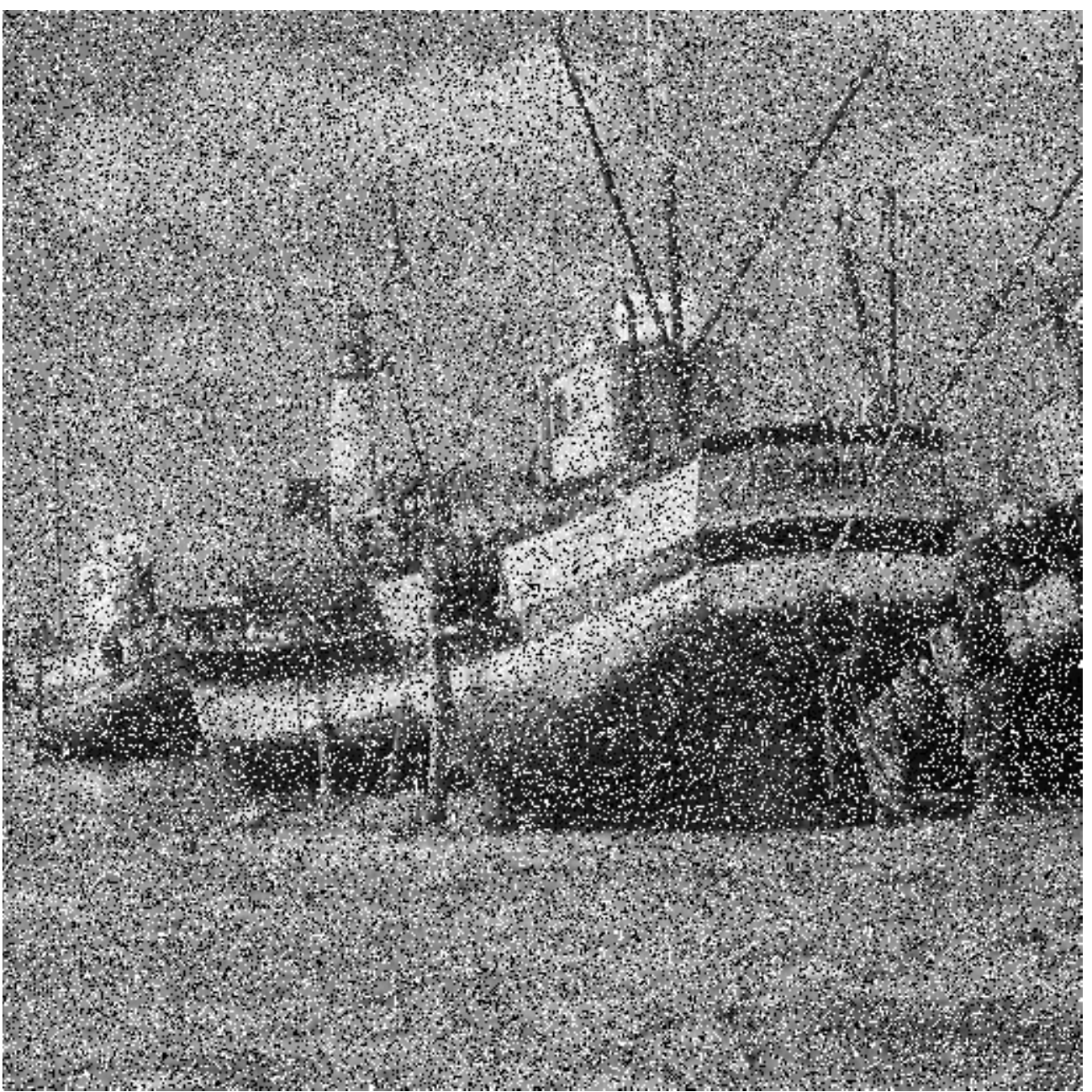}}\\
{\includegraphics[width=0.24\linewidth]{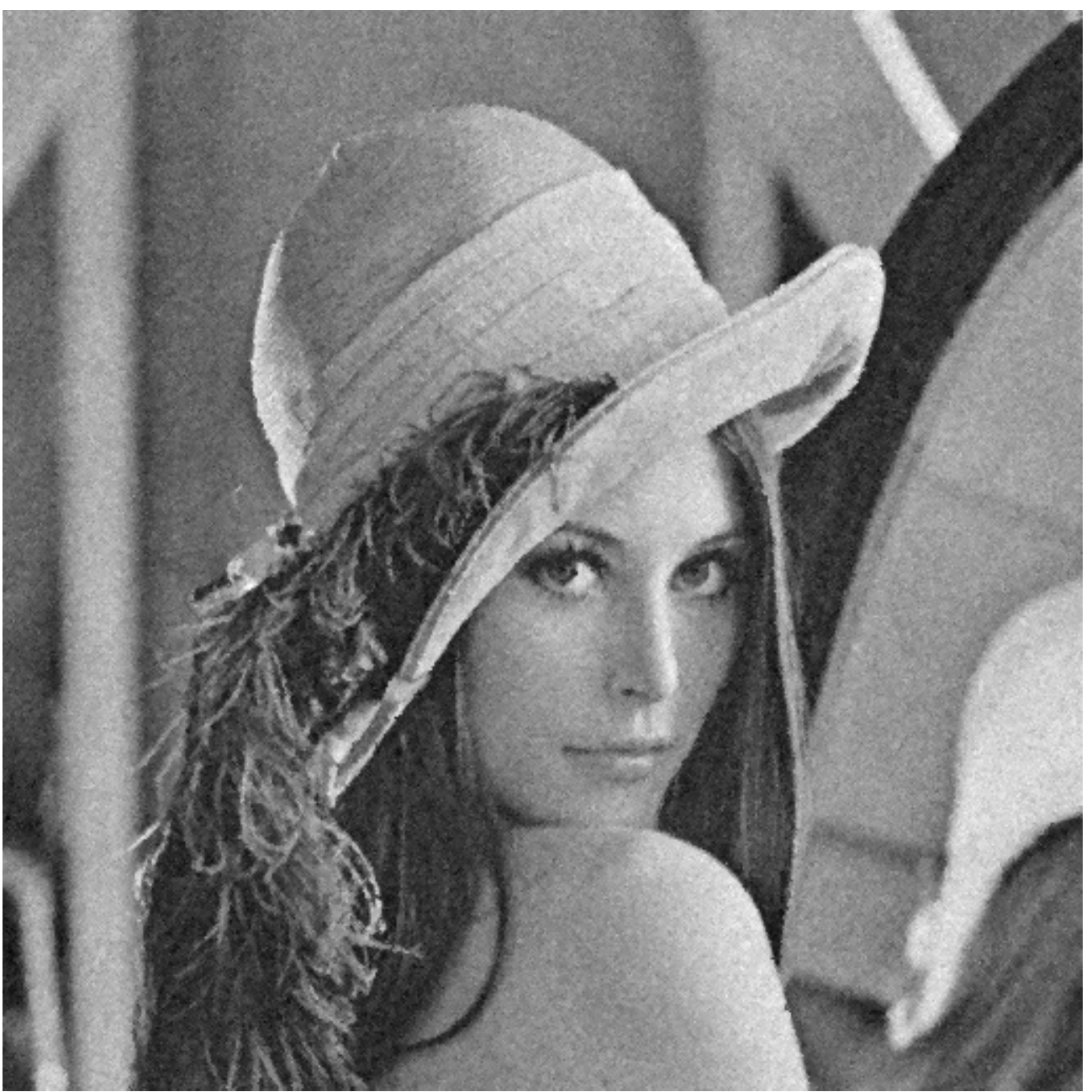}}
{\includegraphics[width=0.24\linewidth]{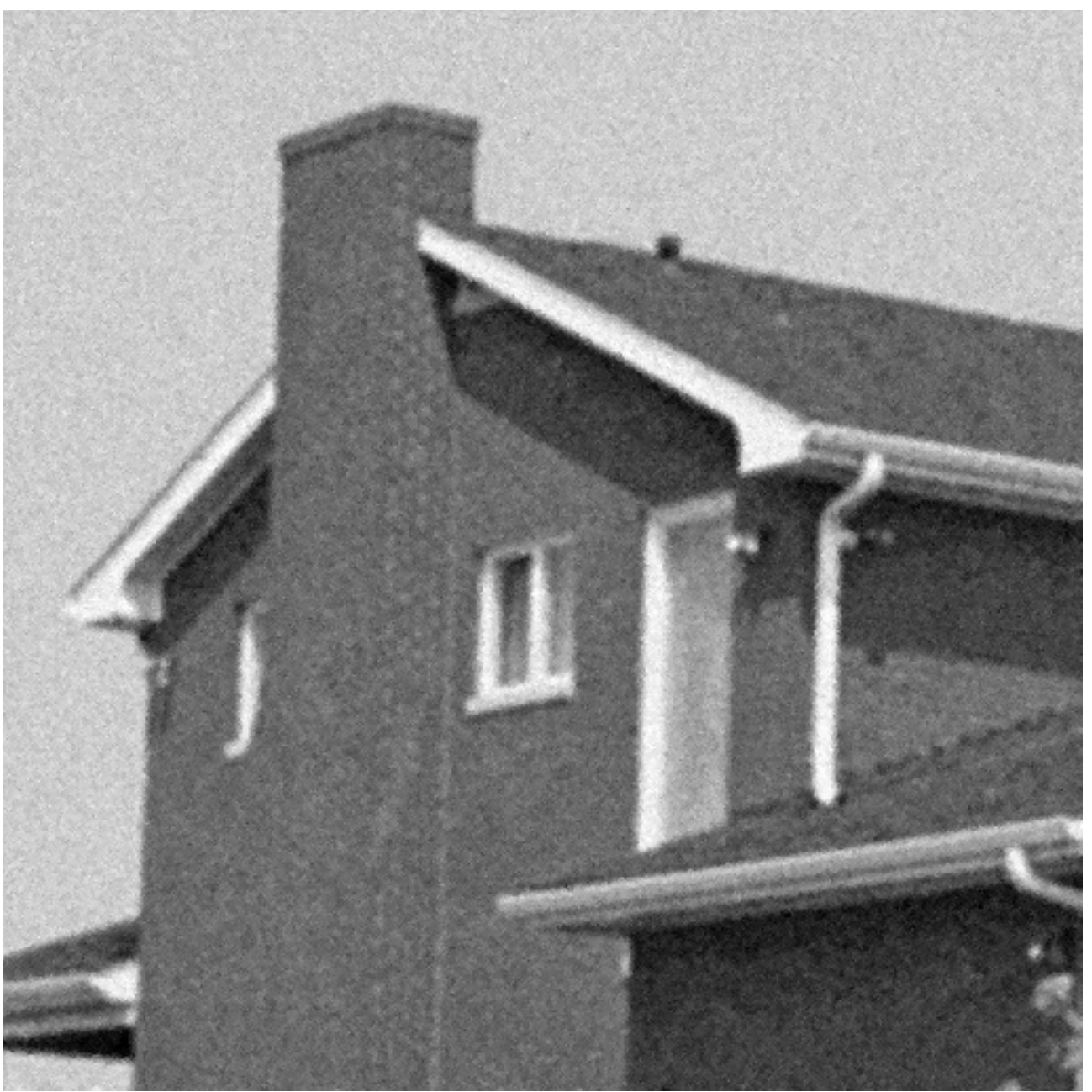}}
{\includegraphics[width=0.24\linewidth]{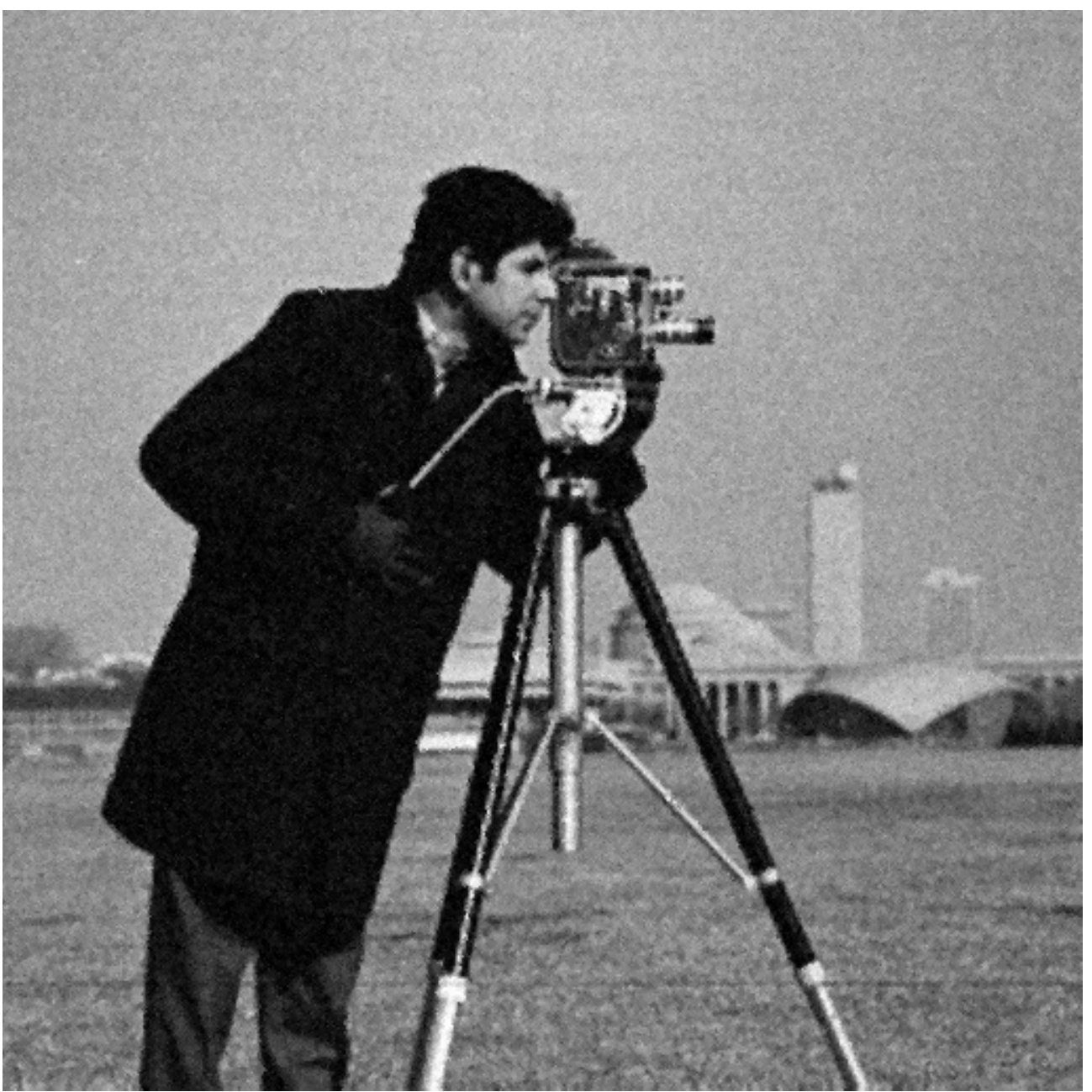}}
{\includegraphics[width=0.24\linewidth]{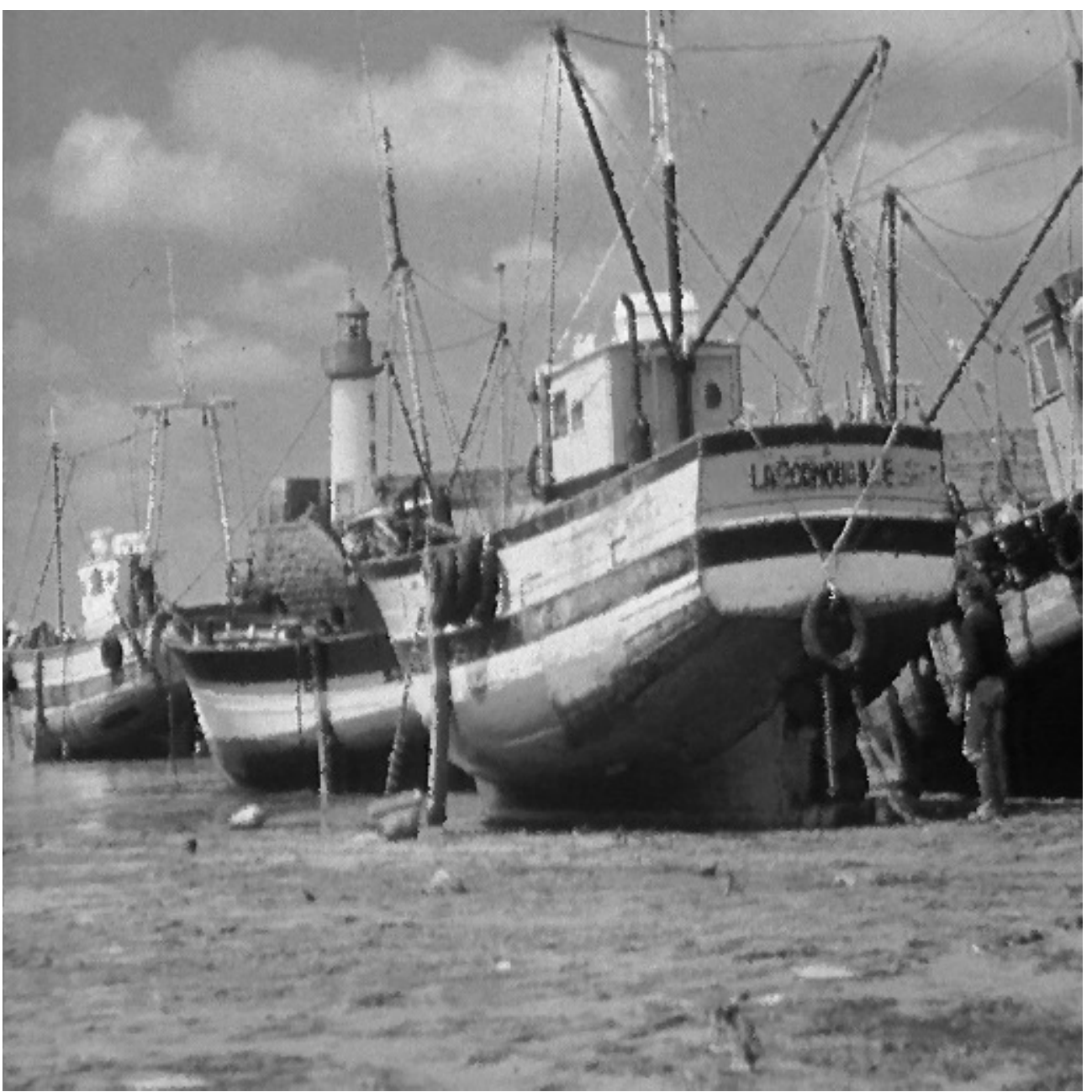}}\\
{\includegraphics[width=0.24\linewidth]{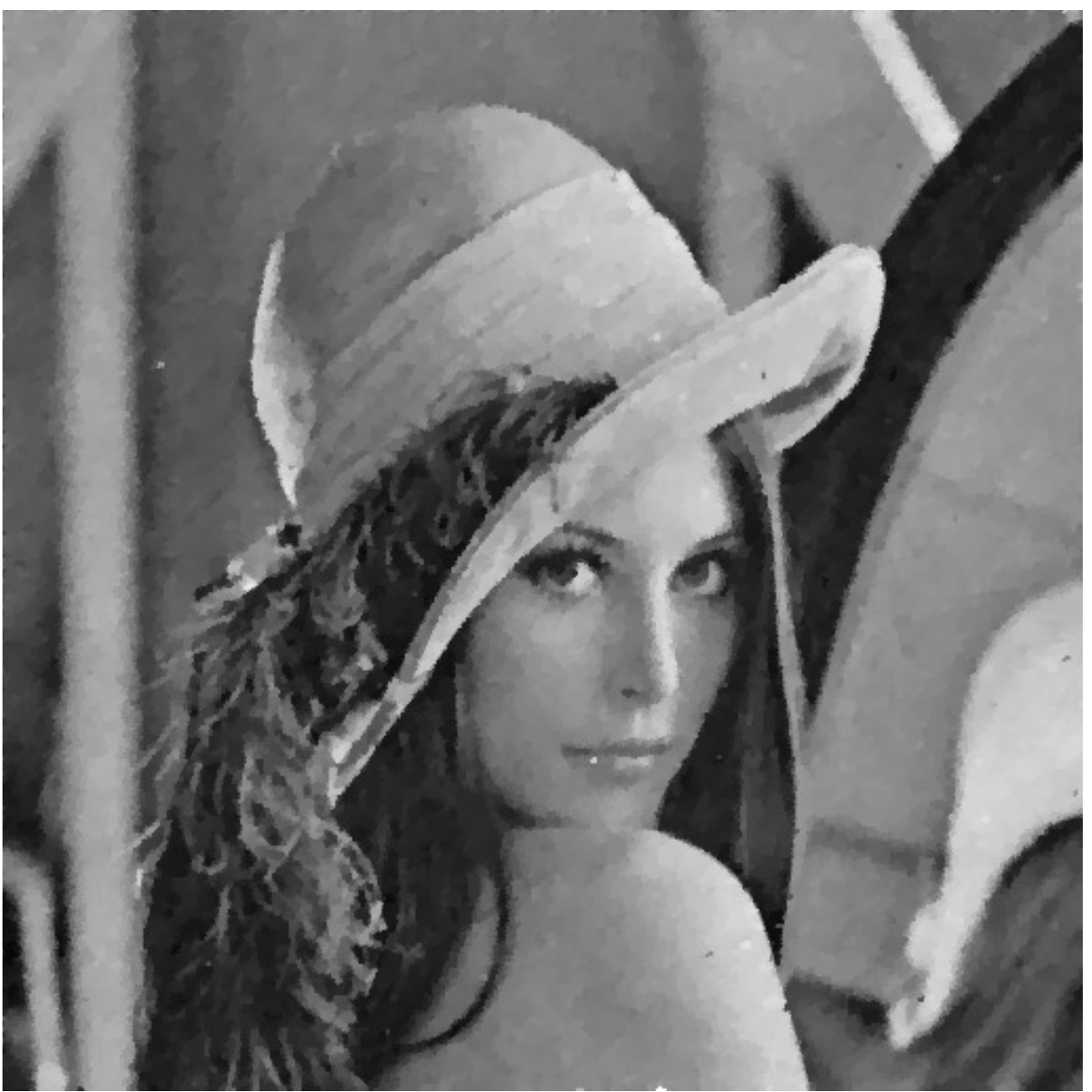}}
{\includegraphics[width=0.24\linewidth]{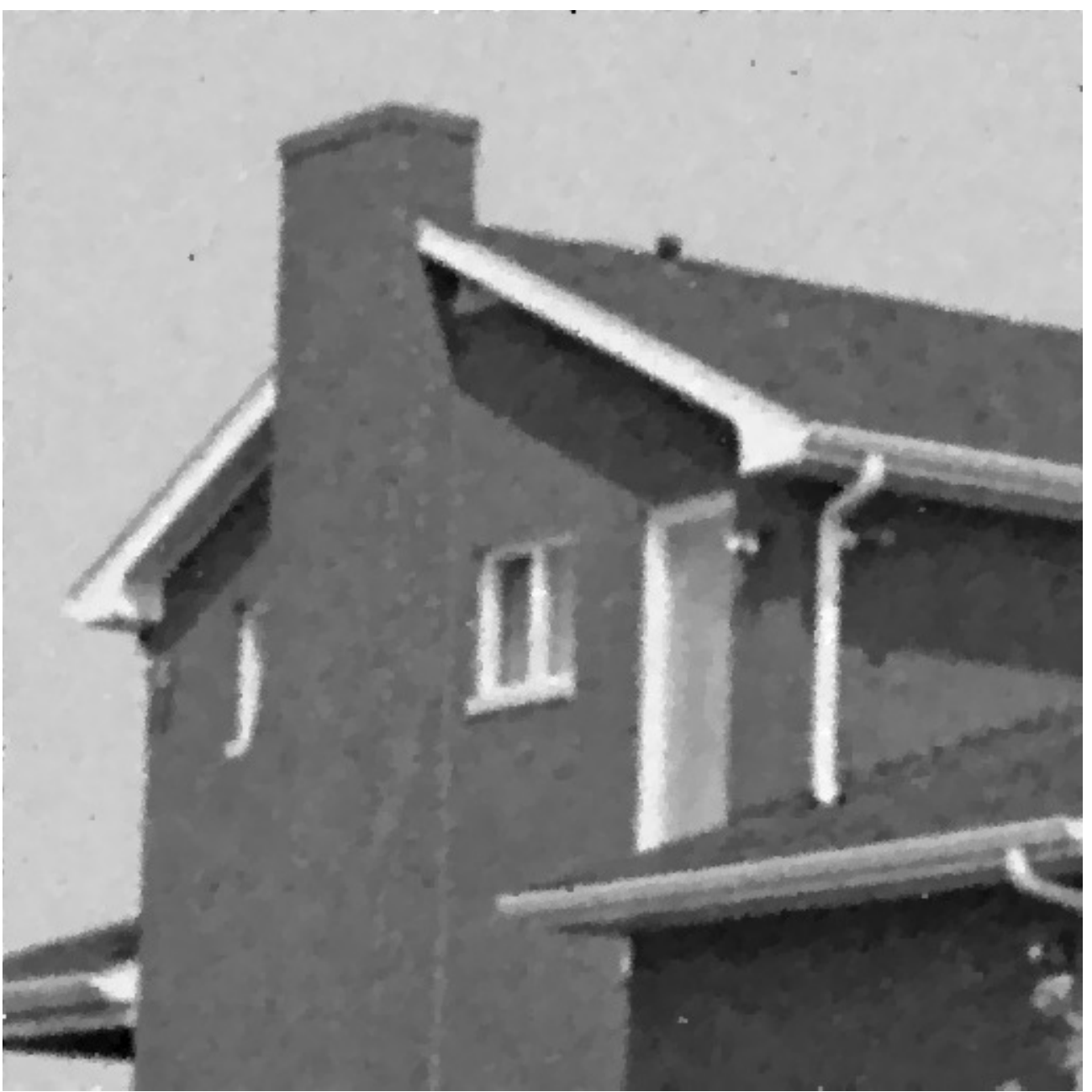}}
{\includegraphics[width=0.24\linewidth]{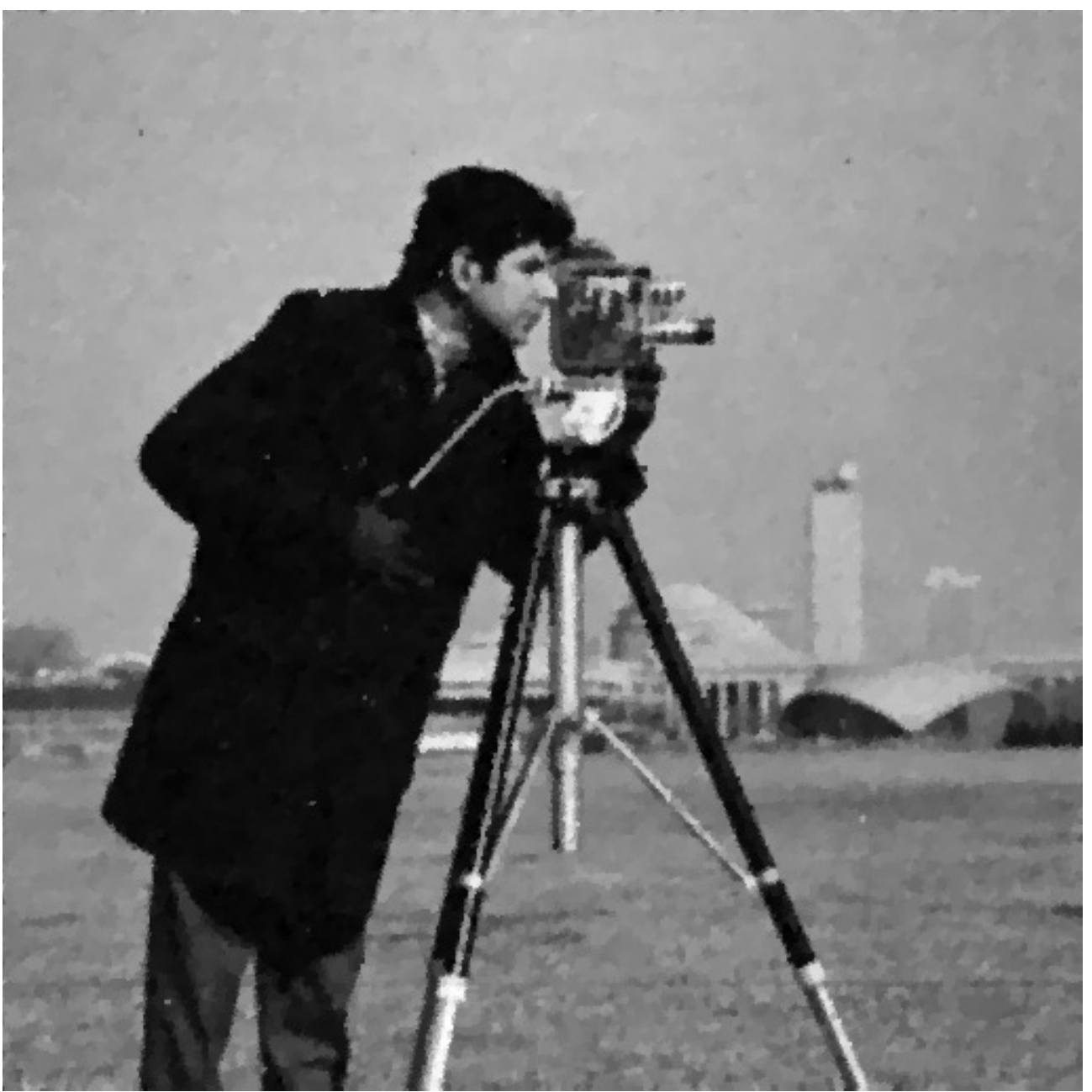}}
{\includegraphics[width=0.24\linewidth]{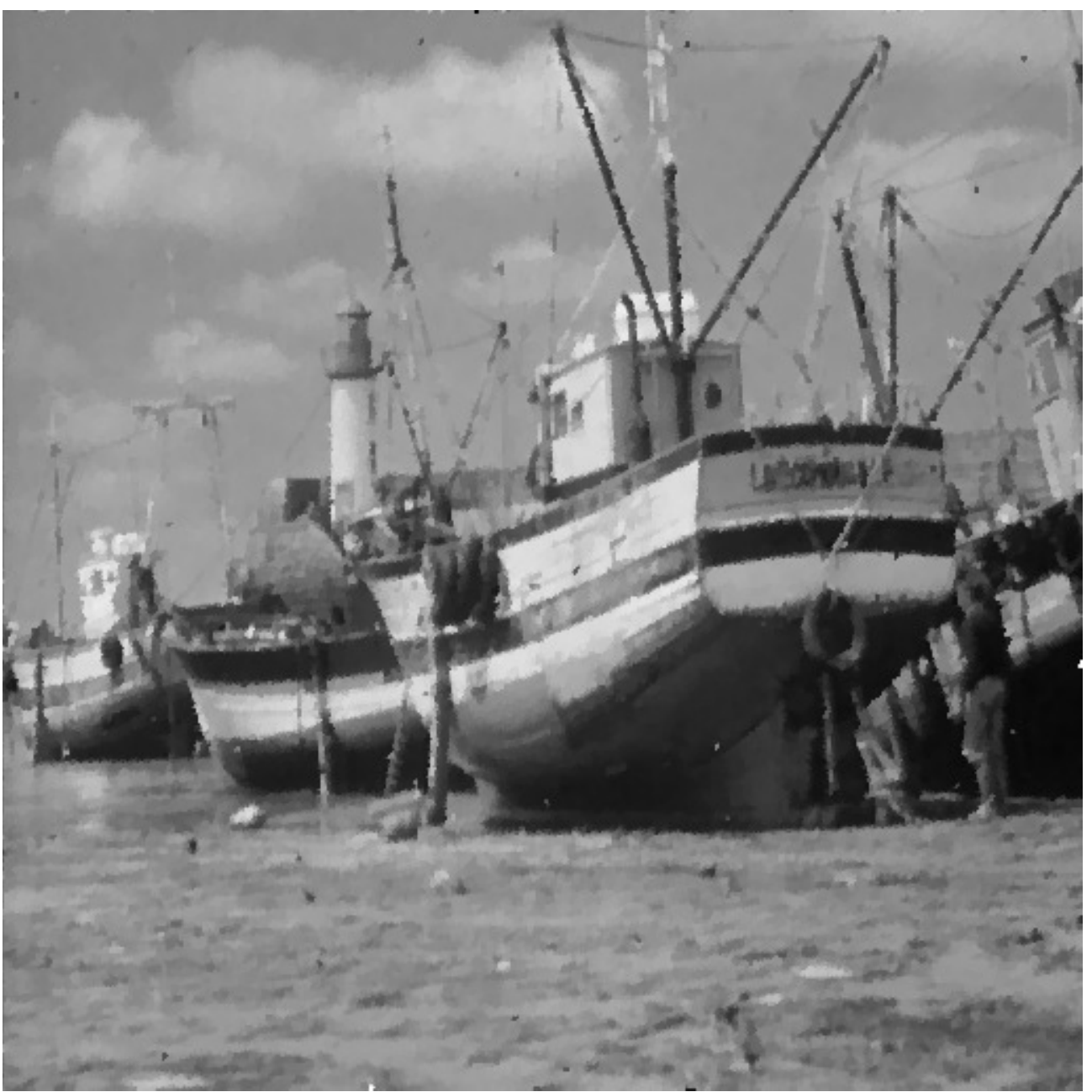}}\\
{\includegraphics[width=0.24\linewidth]{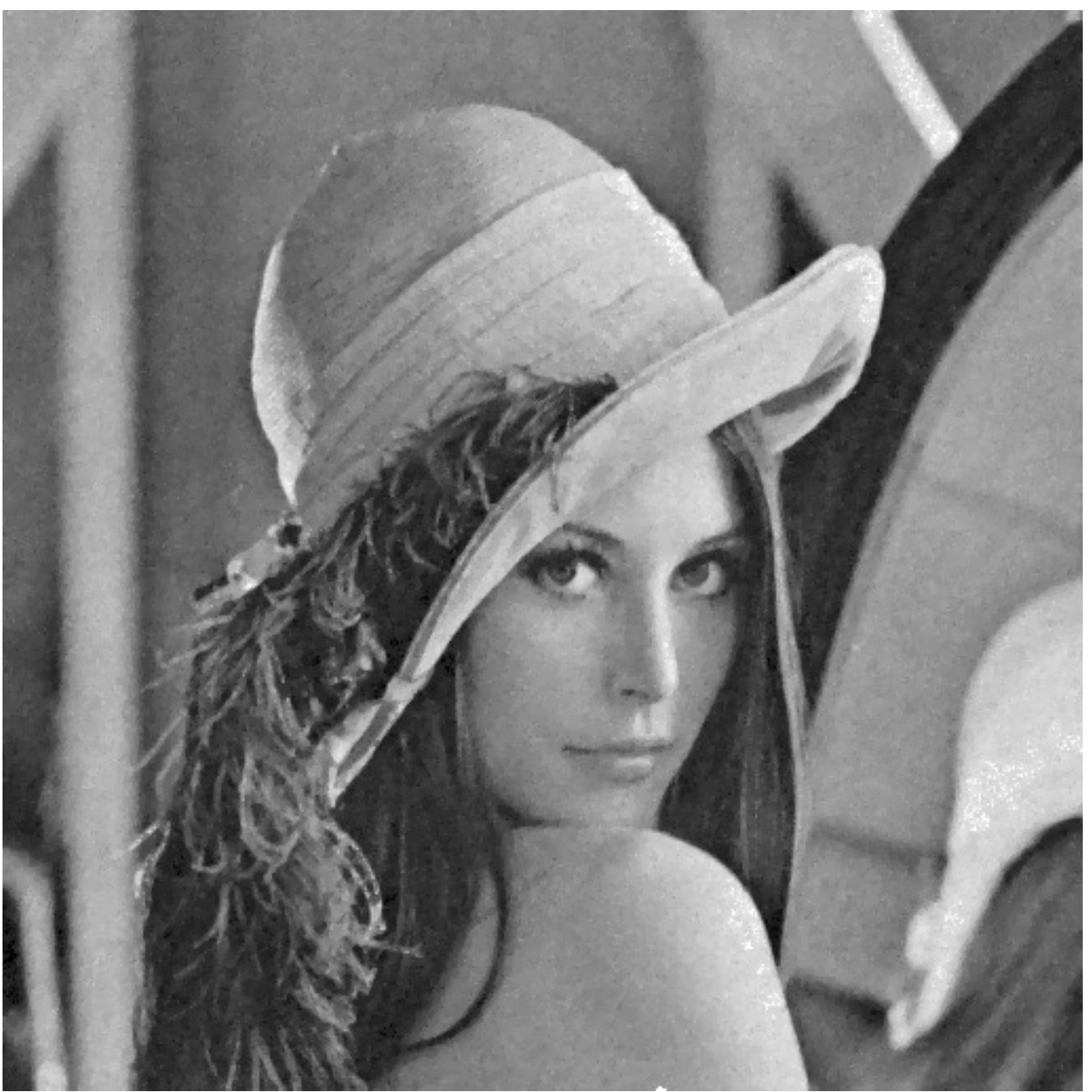}}
{\includegraphics[width=0.24\linewidth]{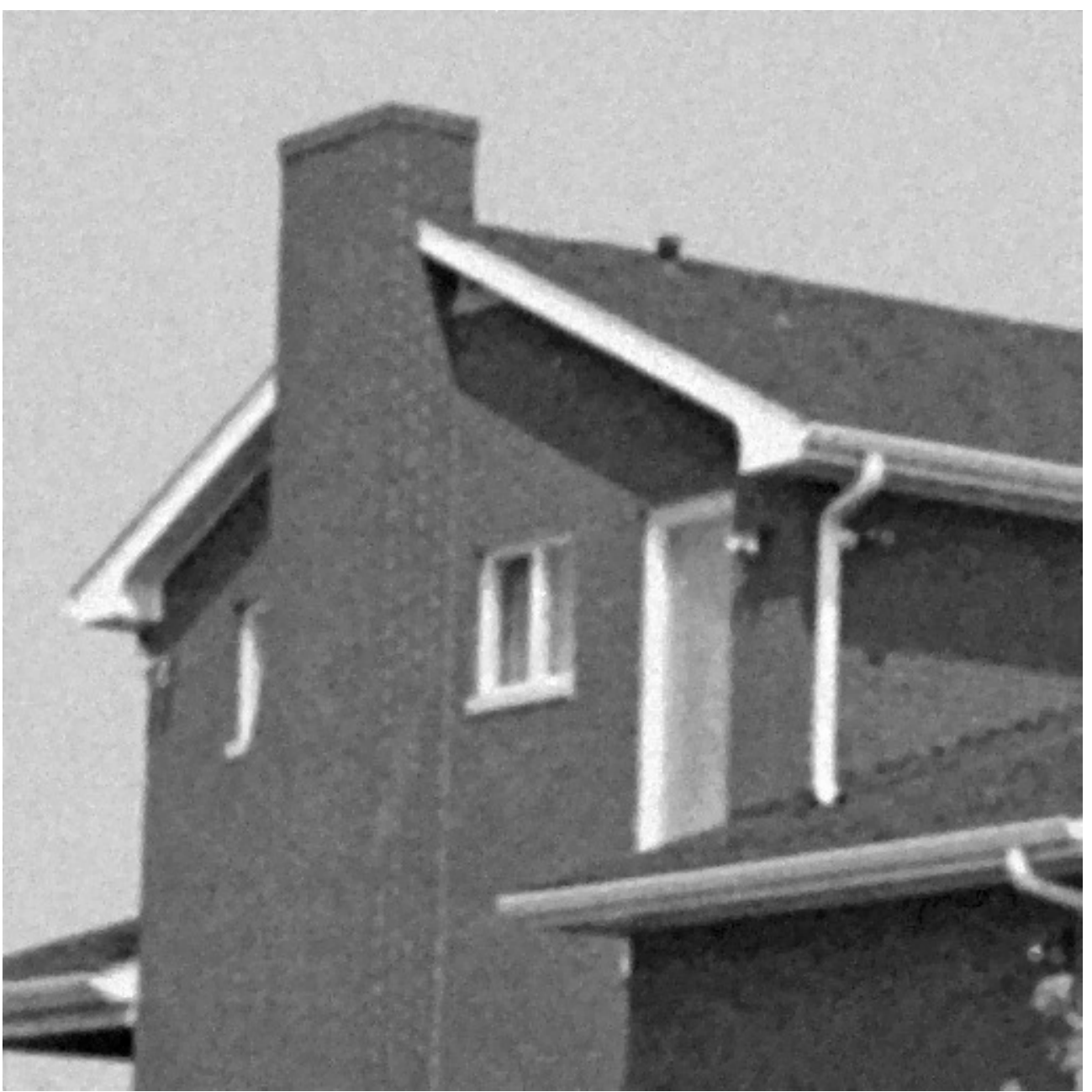}}
{\includegraphics[width=0.24\linewidth]{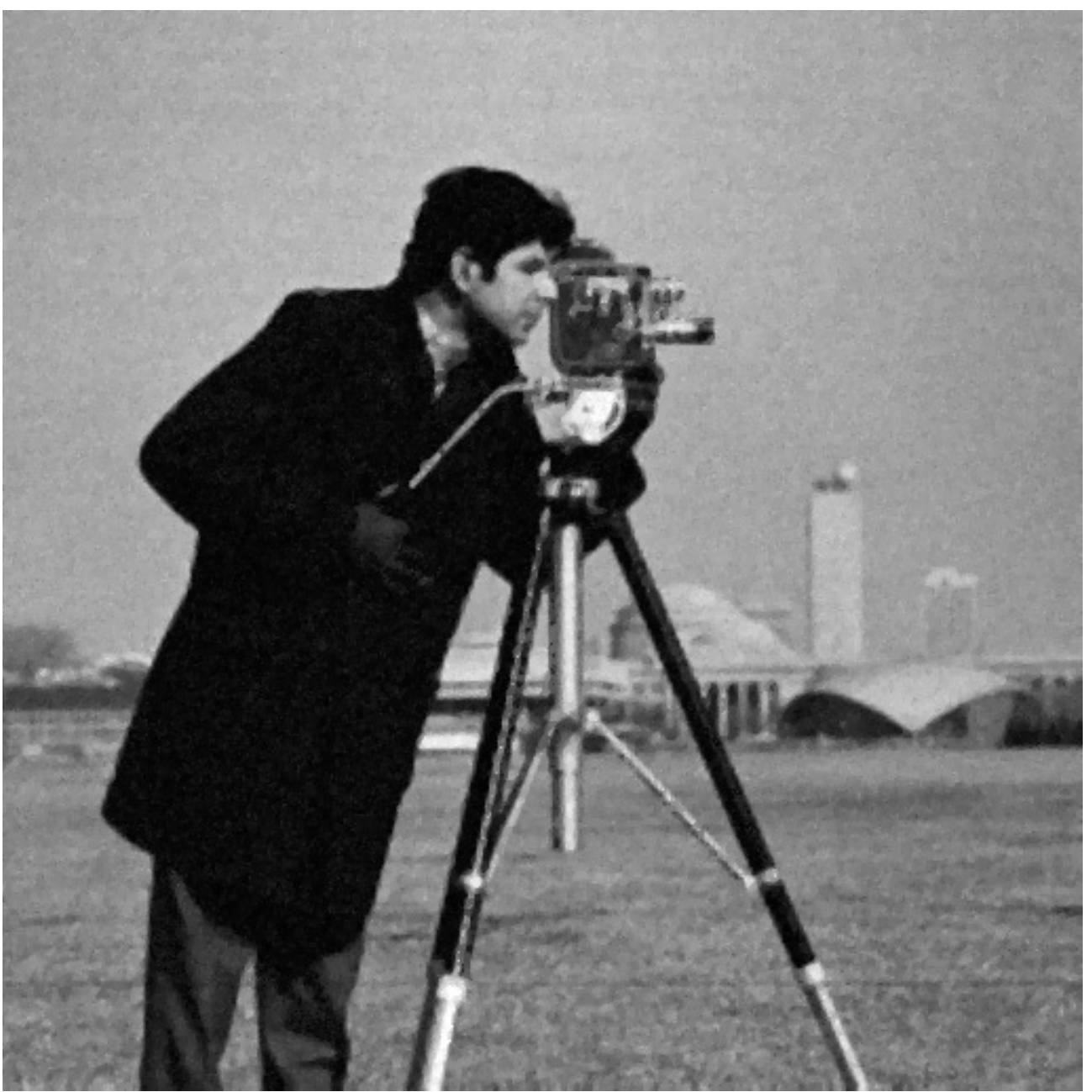}}
{\includegraphics[width=0.24\linewidth]{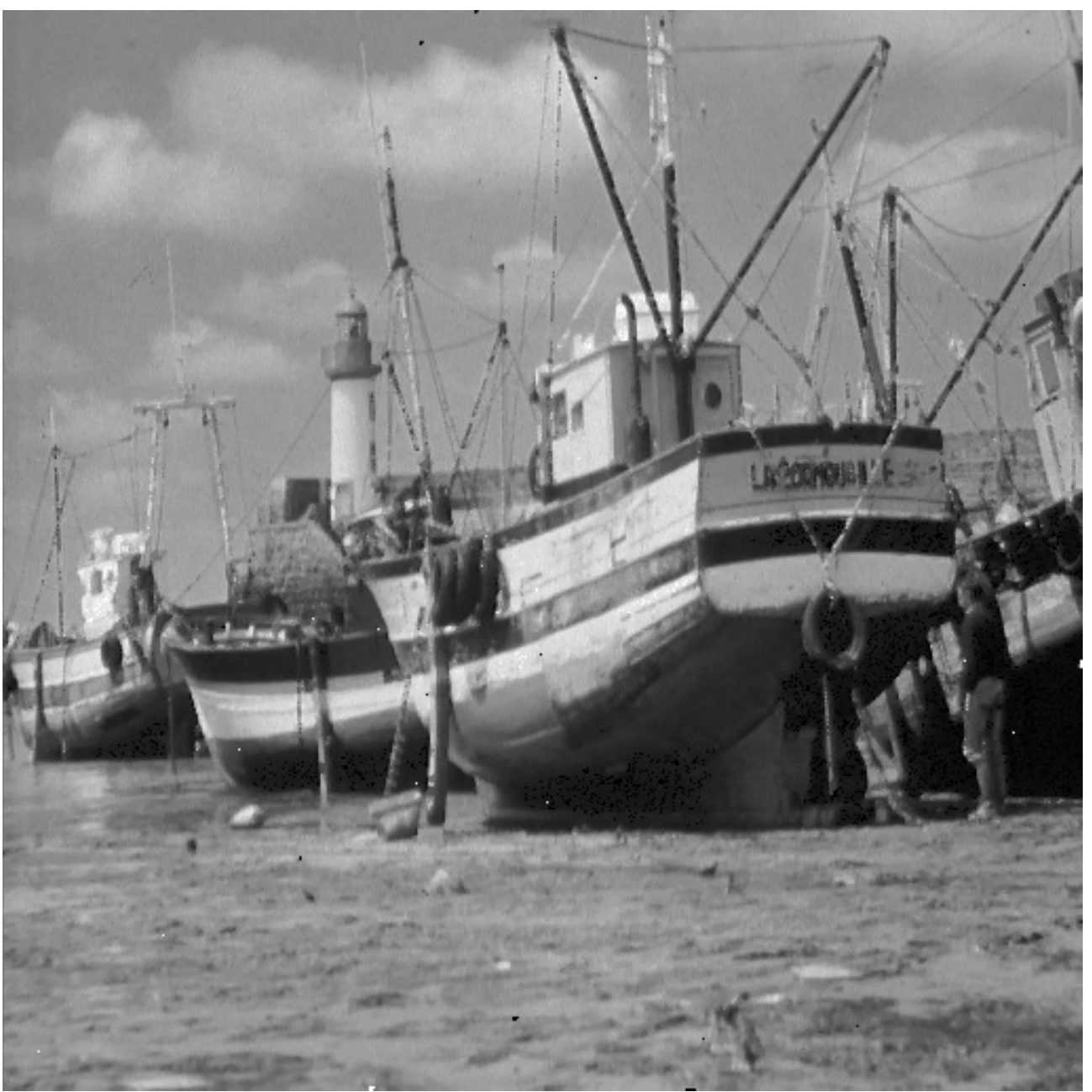}}
\caption{Denoising results of images contaminated by both Gaussian noise and salt-and-pepper impulse noise
with $\sigma=10$ and $s=30\%$. Top row: noisy images; Second row: the results restored by AMF; Third row: the results restored by TVL1; Bottom row: the results restored by total variation blind inpainting using AOP.}
\label{fig:Denoise_10_30}
\end{center}
\end{figure*}

For random-valued impulse noise removal, it is more difficult to detect the corrupted pixels because they can take any value between $d_{\small min}$ and $d_{\small max}$. ACWMF and ROLD are used in two-stage approaches for detecting the damaged pixels~\cite{ACWMF,DoCX07}. In this experiment, we will compare AOP with ACWMF, TVL1, and two-stage approaches for random-valued impulse noise removal. The two-stage approach we used here is just one step of AOP, and the parameter for second stage (total variation image inpainting) is also tuned to achieve the best quality of the restoration images. In addition, we will compare with two other methods, one is Dong et al.'s method~\cite{BlindInpainingDong} for solving problem
\begin{align}
 \Min_{u,v} {1\over2}\sum_{i,j}((Hu)_{i,j}+v_{i,j}-f_{i,j})^2+\lambda_1 TV(u) + \lambda_2\sum_{i,j}|v_{i,j}|.
\end{align}
The parameters $\lambda_1$ and $\lambda_2$ are chosen to achieve the best quality. The other is the penalty decomposition method (PD) by Lu and Zhang~\cite{PenaltyL02} for the problem
\begin{align}\left.\begin{array}{rl}
 \Min\limits_{u,v}\ &{1\over2}\sum_{i,j}((Hu)_{i,j}+v_{i,j}-f_{i,j})^2+\lambda_1 TV(u),\\
 \mbox{subject to }& \|v\|_0\leq L.\end{array}\right.
\end{align}
$\lambda_1$ is tuned to achieve the best quality of the restoration images.

Again four test images are corrupted by Gaussian noise of zero mean and standard deviation ($\sigma =  10, 25$), then we add random-valued impulse noise with different levels ($s=25\%,40\%$) onto the test images, with or without Gaussian noise. The PSNR values of the results from these six methods are summarized in Table~\ref{tab:Denoise_IN2}.

\begin{table*}[!h]
\footnotesize
\begin{center}
\begin{tabular}{|c|r|ccccccc|}\hline
\multicolumn{9}{|c|}{\bf Random-Valued Impulse Noise}\\\hline
   &$\sigma+s$ & Noisy & ACWMF & TVL1&Two-Stage &\cite{BlindInpainingDong}& PD& AOP   \\\hline
\multirow{6}{*}{``Lena"} &   0+25\%& 15.25 & 30.53& 31.75&32.65 &31.09&33.42&{\bf 33.74}\\\cline{2-9}
 &10+25\% & 15.26 & 30.44 & 31.87 &32.65 &31.21 & 33.42&{\bf 33.66} \\\cline{2-9}
 &25+25\% & 15.11 & 29.23 & 30.37 &31.32 & 30.13& 32.36&{\bf 32.64}   \\\cline{2-9}
   & 0+40\%& 13.27 & 24.62 & 29.22&28.44 &28.61&30.39&{\bf 30.77}\\\cline{2-9}
& 10+40\% & 13.22 & 24.31 & 28.94 &28.15 &28.41&30.14&{\bf 30.34}   \\\cline{2-9}
 &25+40\% & 13.05 & 23.42 & 28.02 &27.23 &27.40&29.23&{\bf 29.54} \\\hline
\multirow{6}{*}{``House"}   & 0+25\% & 14.71 & 31.50 & 36.88&35.55 &35.04&41.38& {\bf 42.11}\\\cline{2-9}
&10+25\% & 14.71 & 31.38 & 36.36 &35.55&35.20&41.36& {\bf 41.61}  \\\cline{2-9}
&25+25\% & 14.49 & 30.48 & 35.51 &34.62&34.33&40.54& {\bf 40.85}\\\cline{2-9}
&0+40\% & 12.65 & 23.90 & 32.77 &30.07&31.45&35.53& {\bf 37.39}\\\cline{2-9}
 &10+40\%& 12.62 & 23.82 & 32.44 &29.97&31.24&35.24& {\bf 36.71}\\\cline{2-9}
 &25+40\% & 12.34 & 22.93 & 31.52 & 29.02&30.38&34.32&{\bf 35.86} \\\hline
  \multirow{6}{*}{``Cameraman"}   &  0+25\%& 14.48 & 28.93 &31.24 &31.75&30.54&32.72& {\bf 33.16}\\\cline{2-9}
 &10+25\% & 14.52 & 28.99 & 31.32 &31.86&30.54&32.72&{\bf 33.26}  \\\cline{2-9}
 &25+25\% & 14.10 & 27.98 & 30.43 &30.93&29.63&31.80&{\bf 32.55}  \\\cline{2-9}
    & 0+40\%& 12.37 & 22.26 &27.36 & 26.51&27.00&28.39&{\bf 29.16}\\\cline{2-9}
 &10+40\% & 12.39 & 22.42 & 27.75 &26.74&27.37&28.42&{\bf 29.21} \\\cline{2-9}
 &25+40\% & 12.02 & 21.32 & 26.69 &25.56&26.30&27.43& {\bf 28.11}\\\hline
\multirow{6}{*}{``Boat"}  &0+25\%&15.29 & 28.18&28.63&29.37&28.26&29.48 & {\bf 29.60}\\\cline{2-9}
 &10+15\%& 15.35 & 28.16 & 28.76 &29.48&28.36&29.61& {\bf 29.78}\\\cline{2-9}
 &25+25\%& 14.01 & 27.03 & 27.73 &28.58&27.43&28.62& {\bf 28.81}\\\cline{2-9}
 &0+40\%&13.30 &23.56 & 26.16& 26.03&25.88&26.88&{\bf 27.12}\\\cline{2-9}
  &10+40\%& 13.31 & 23.42 & 26.22 & 25.96&25.90&26.88&{\bf 26.99}\\\cline{2-9}
   &25+40\%& 13.12 & 22.37 & 25.27 &25.00 &24.79&25.93&{\bf 26.02}\\\hline
\end{tabular}
\caption{PSNR(dB) for denoising results of different algorithms for noisy images corrupted by random-valued impulse noise and mixed Gaussian  impulse noise. $\sigma$ is the standard deviation for the Gaussian noise and $s$ is the level of random-valued impulse noise.}
\label{tab:Denoise_IN2}
\end{center}
\end{table*}

From Table \ref{tab:Denoise_IN2}, we can see that for random-valued impulse noise, the results from total variation blind inpainting using AOP are better than those by other methods for all noise levels. The comparison of ACWMF and TVL1 shows that TVL1 outperforms ACWMF for all noise levels tested, because ACWMF misses quite a lot of real noise and false-hits some noise-free pixels. TVL1 has better performance than two-stage approach for the cases when noise level is high ($s=40\%$ in the numerical experiments), because the accuracy of detecting corrupted pixels by random-valued impulse noise using ACWMF is  very low when the noise level is high. The accuracy of detecting corrupted pixels can be improved by our method via iteratively updating the binary matrix $\Lambda$, as shown in the comparison. PD outperforms other methods except AOP because $\ell_0$ term is used in the problem. The problem PD solves is a non-continuous problem and it will stop at a local minimum $(u^*,v^*)$. However, $u^*$ may not be a local minimum of the problem with $u$ only. While AOP will converge to a local minimum of the problem with $u$ only.

The visual comparison of some results is shown in Fig.~\ref{fig:Denoise_10_25}. We can see noisy artifacts in the background of the images obtained by ACWMF, and the images obtained by TVL1 are blurred with some lost details. Images restored by total variation blind inpainting are smooth in flat regions of the background and the details are kept.

\begin{figure*}[!h]
\begin{center}
{\includegraphics[width=0.24\linewidth]{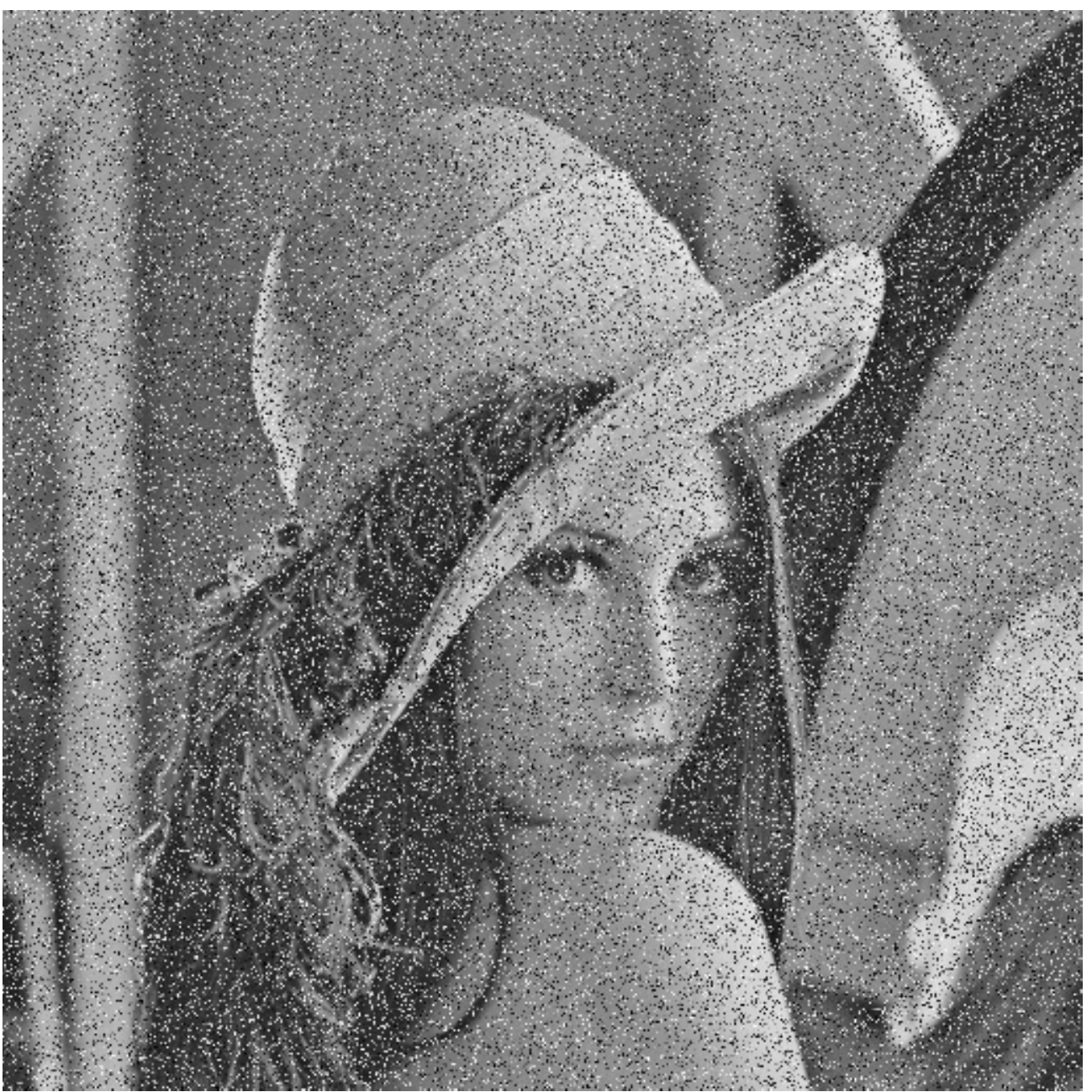}}
{\includegraphics[width=0.24\linewidth]{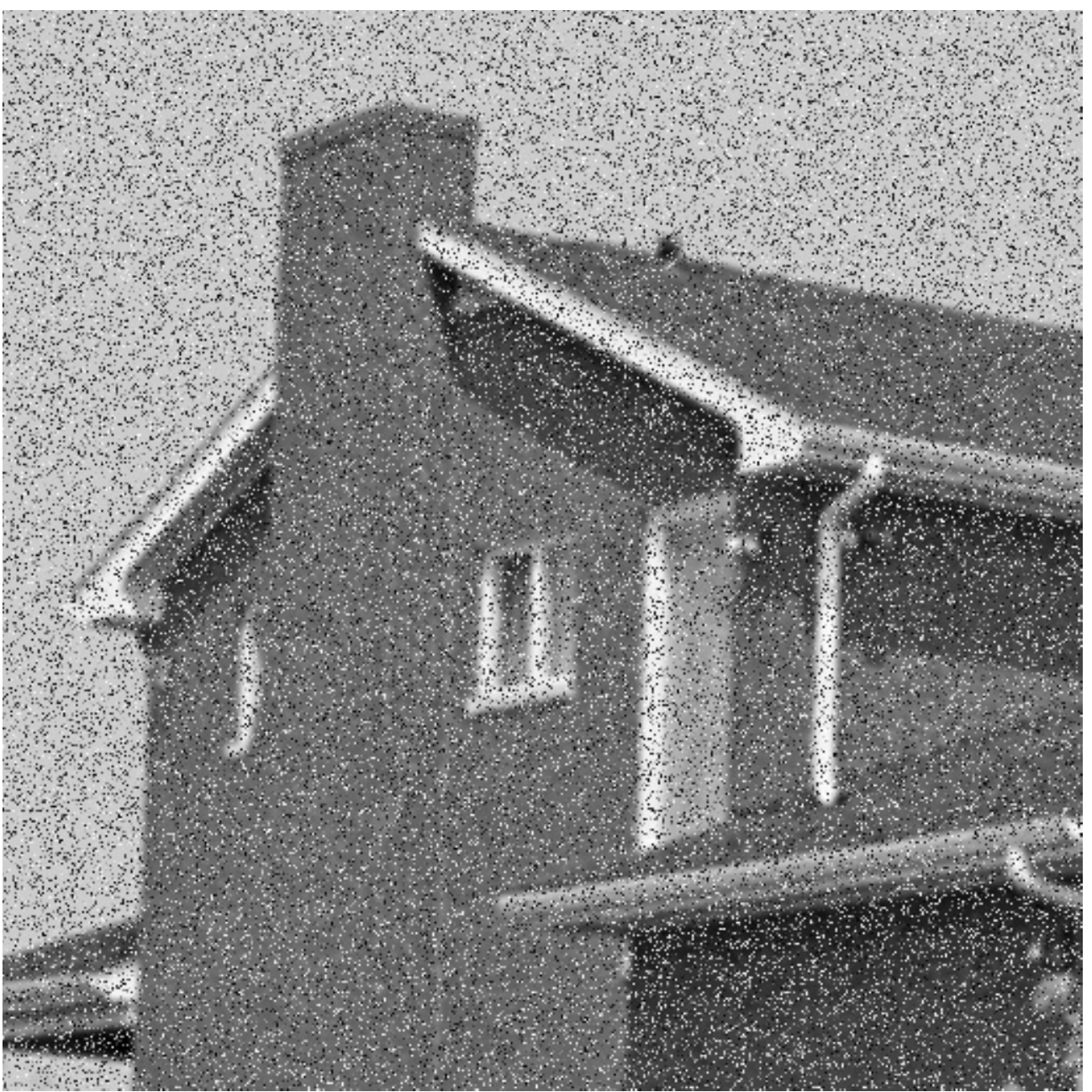}}
{\includegraphics[width=0.24\linewidth]{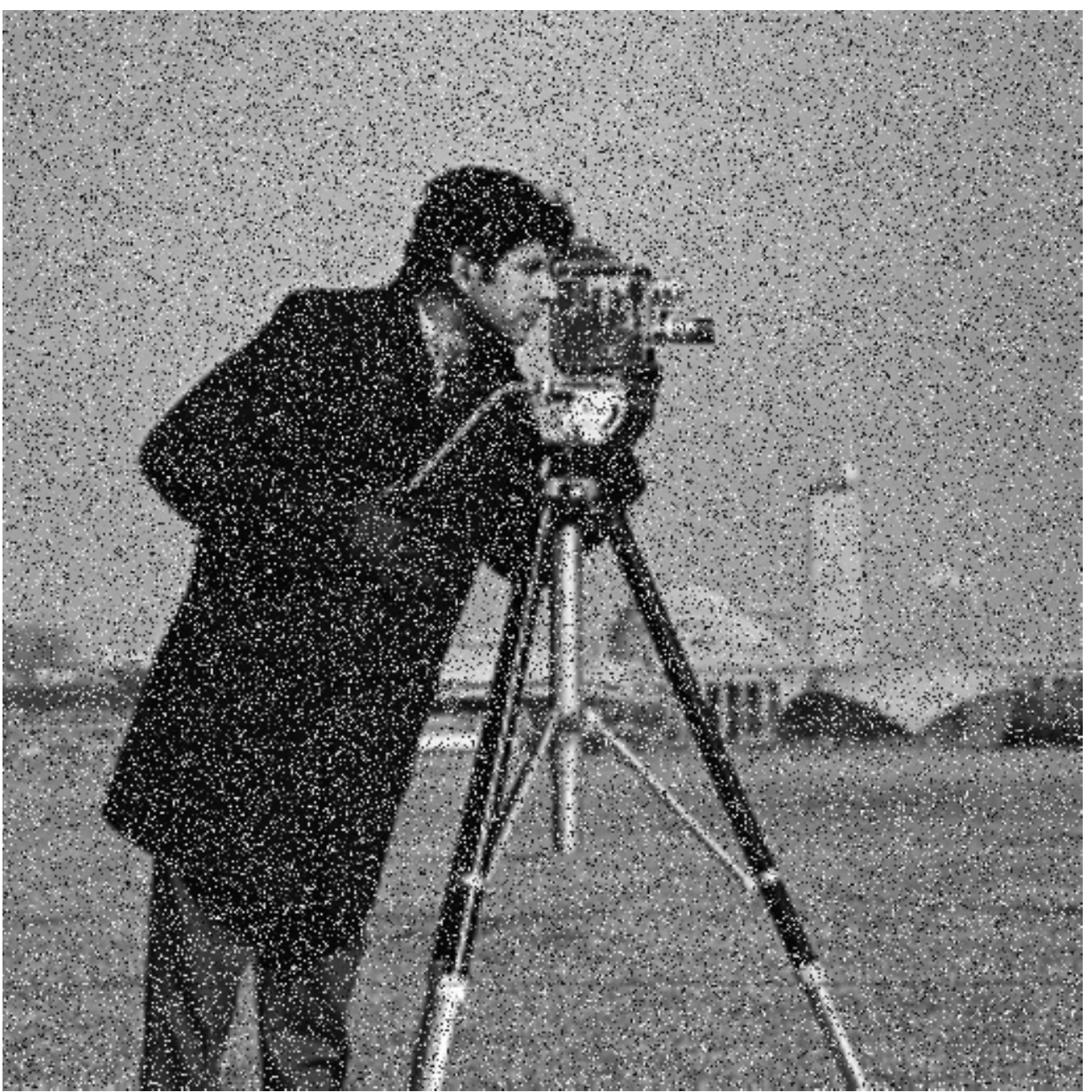}}
{\includegraphics[width=0.24\linewidth]{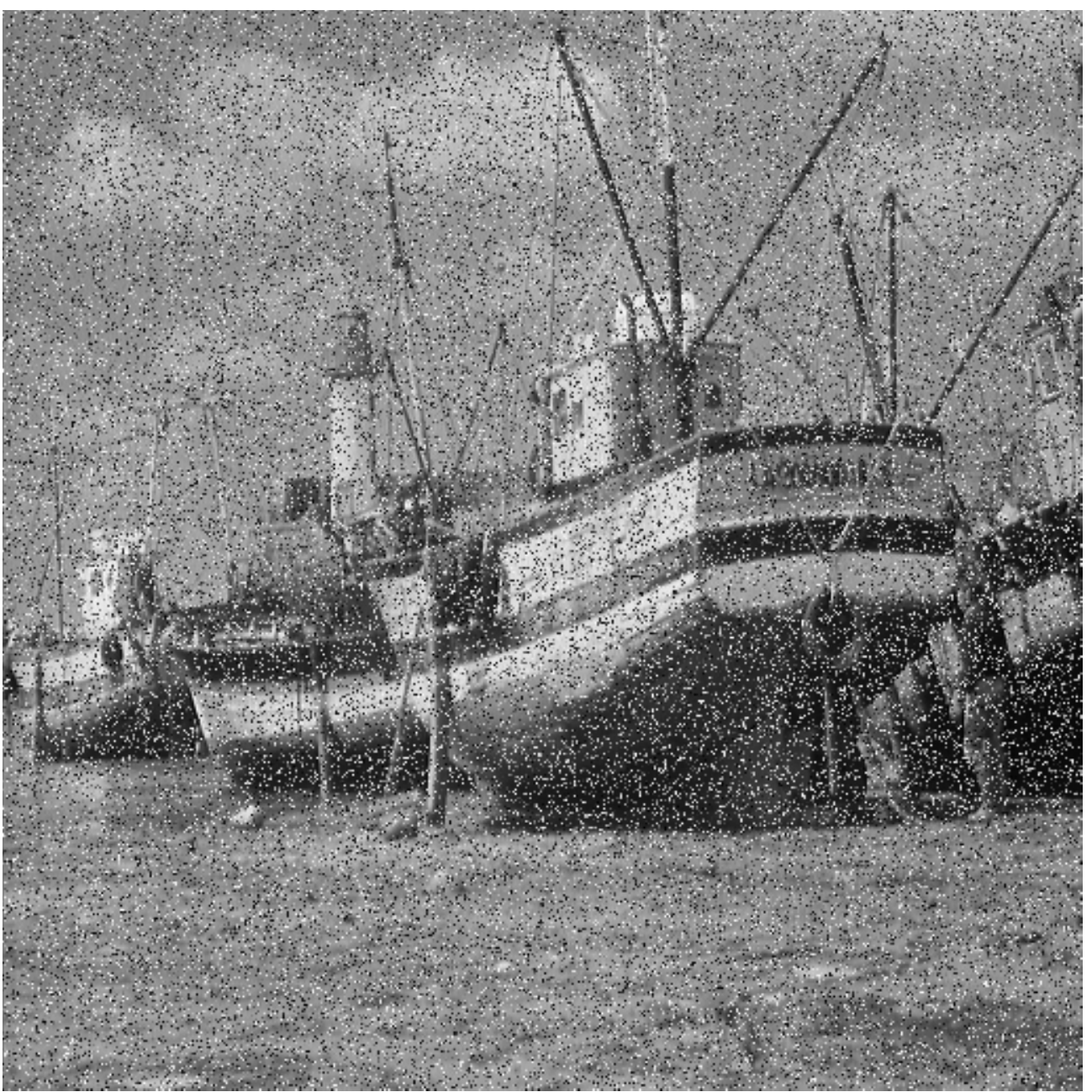}}\\
{\includegraphics[width=0.24\linewidth]{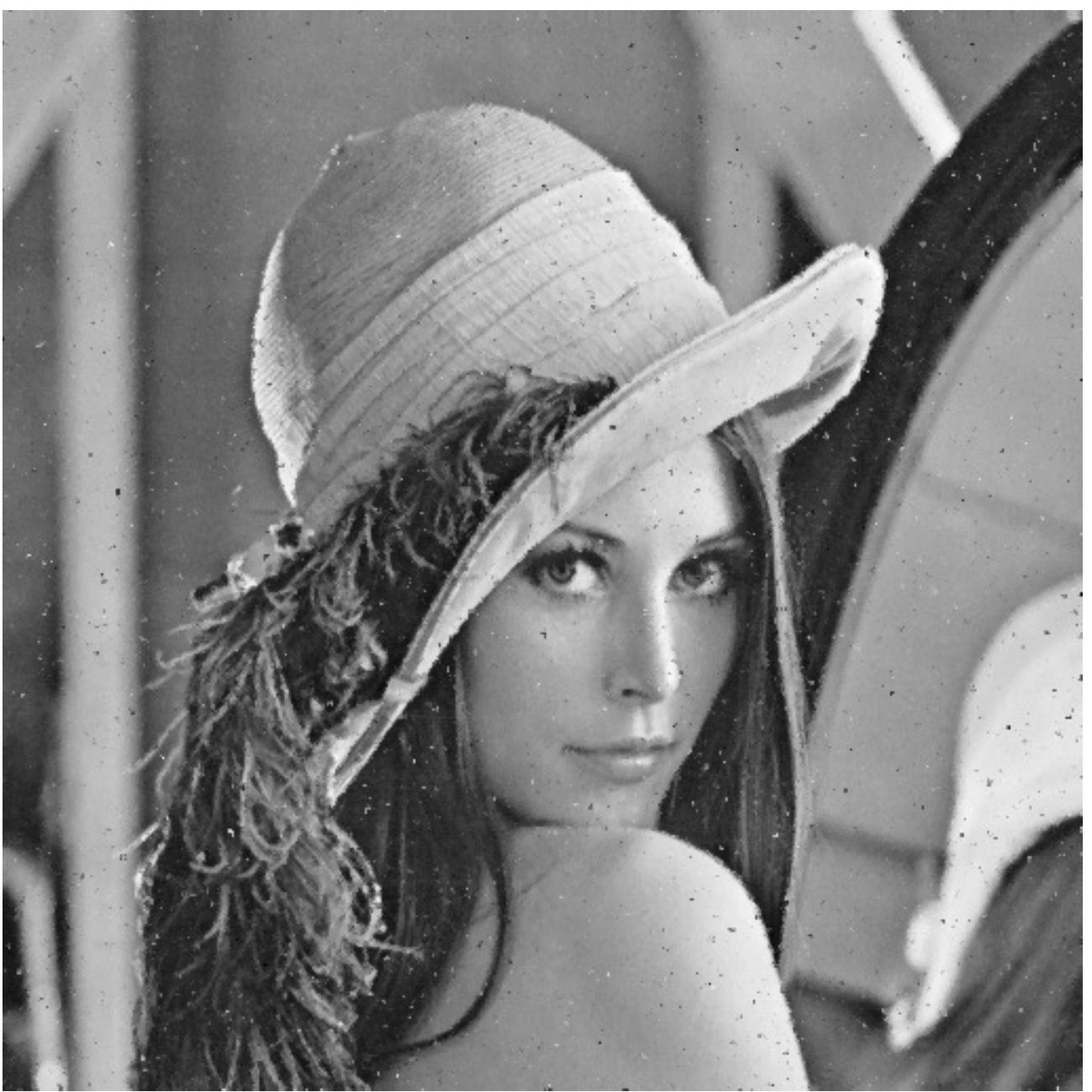}}
{\includegraphics[width=0.24\linewidth]{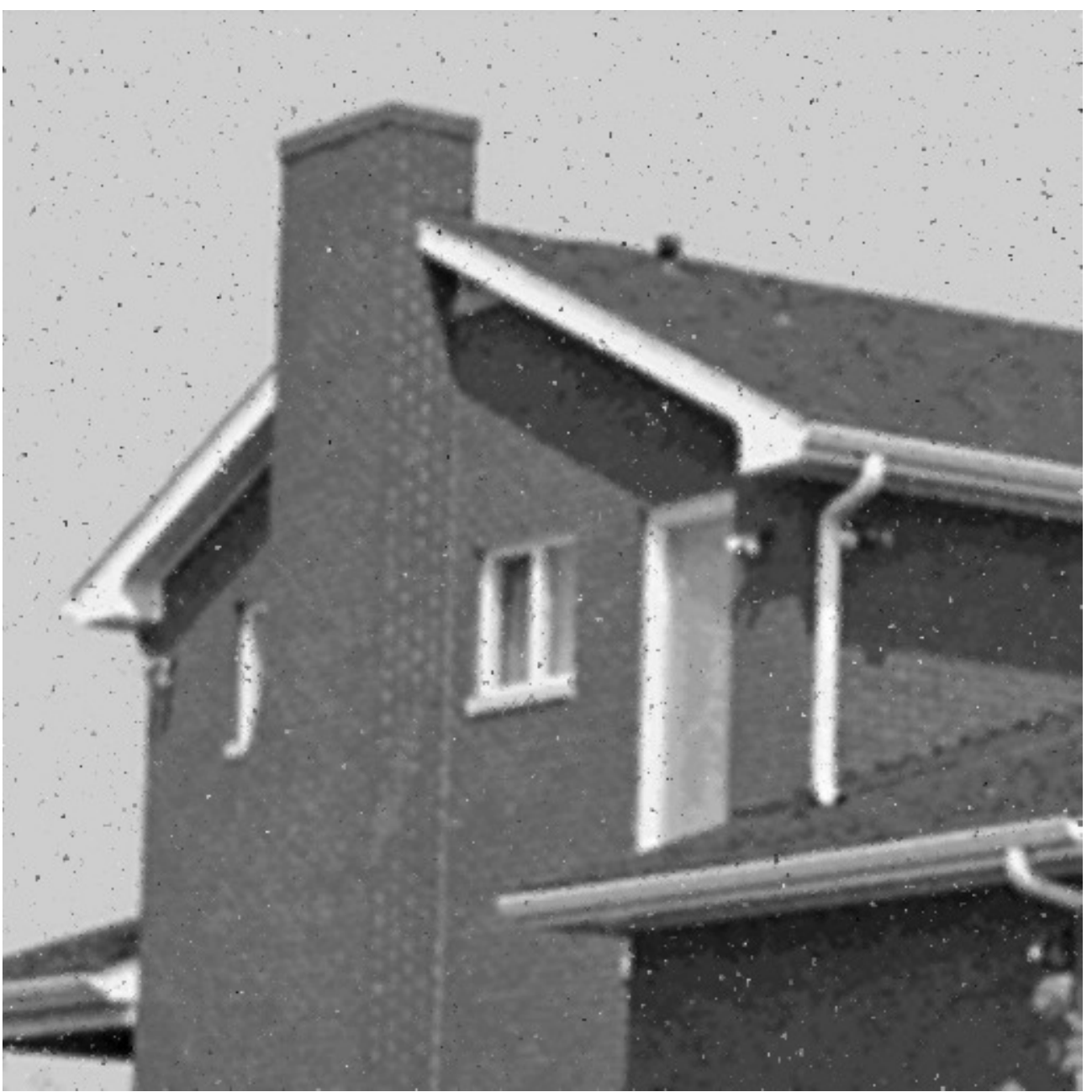}}
{\includegraphics[width=0.24\linewidth]{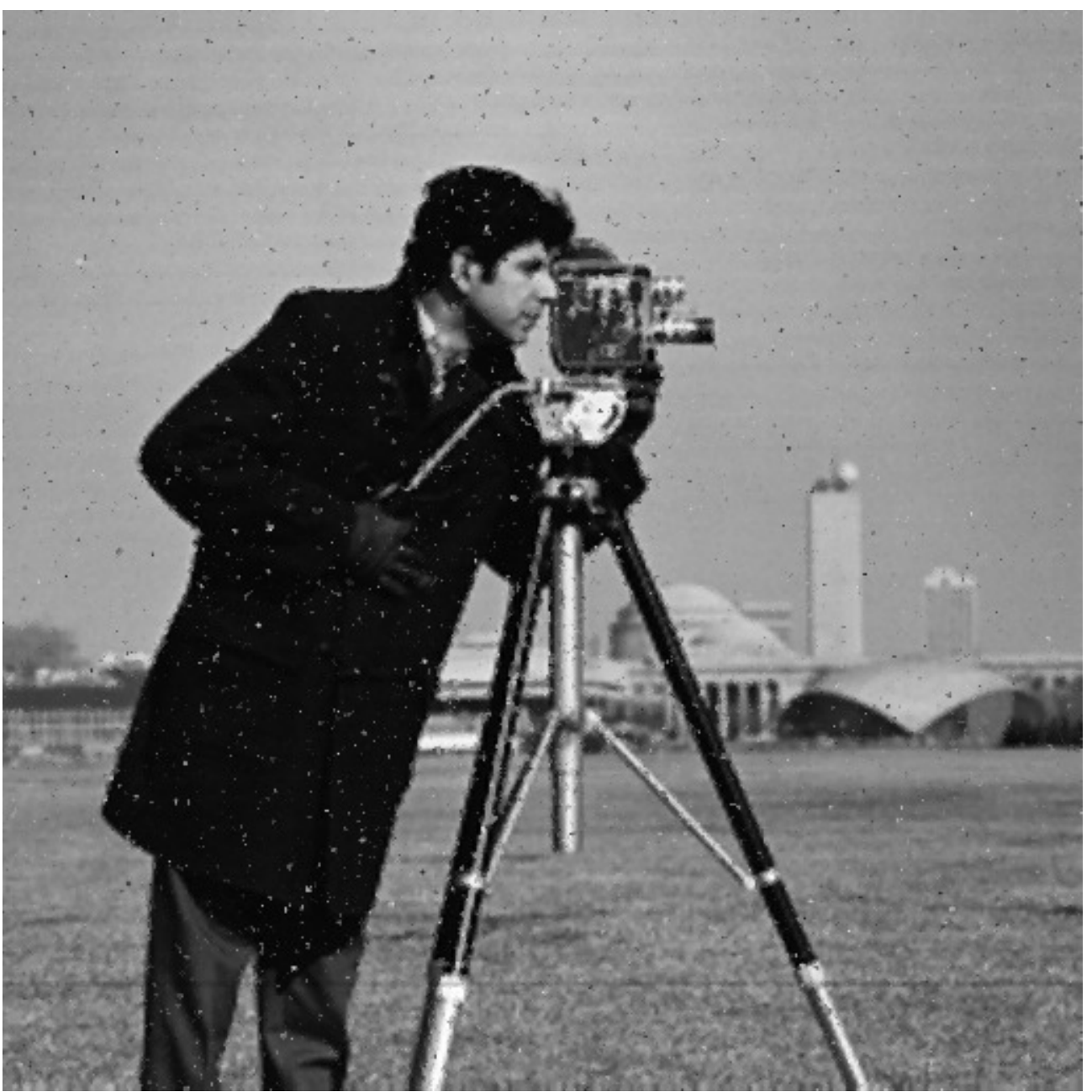}}
{\includegraphics[width=0.24\linewidth]{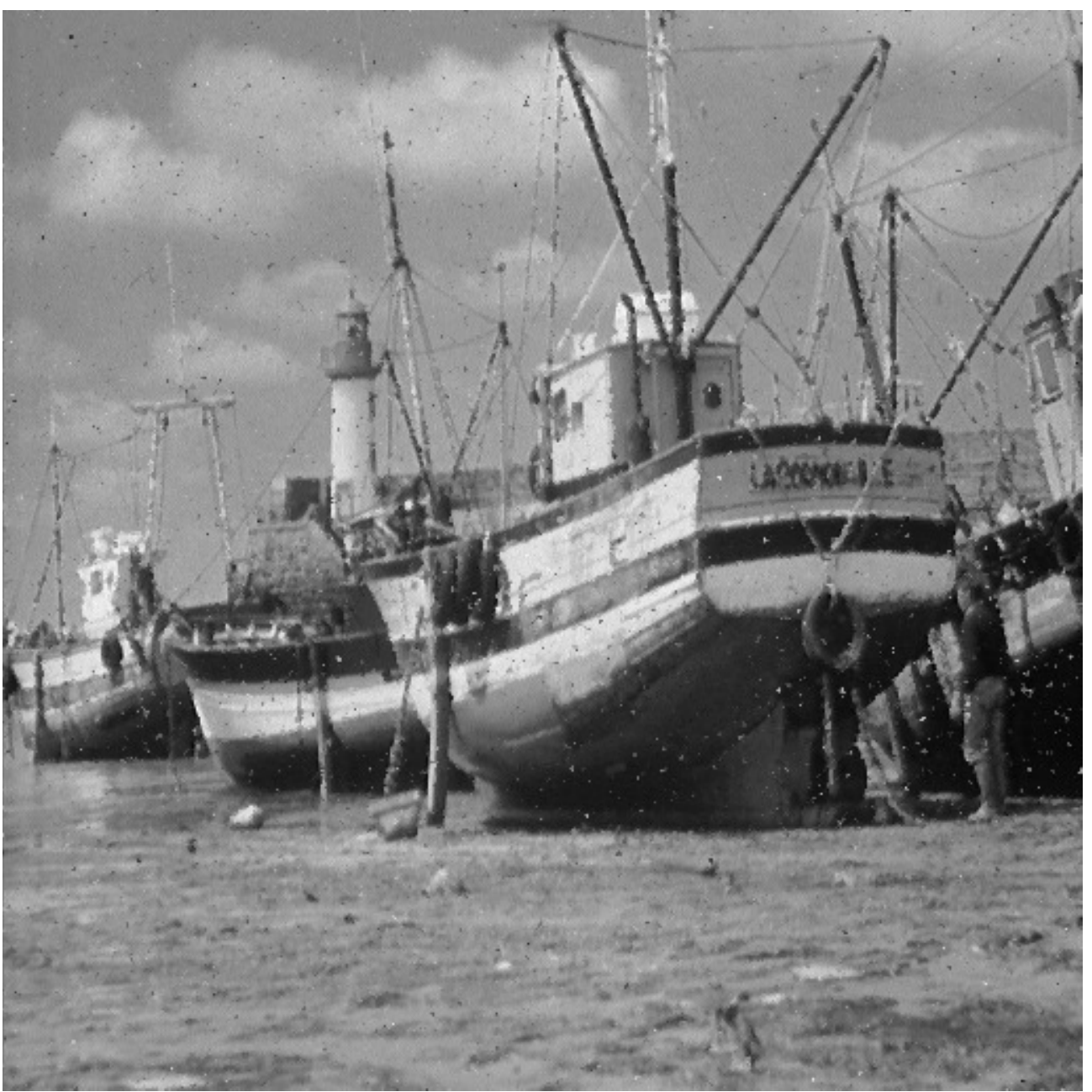}}\\
{\includegraphics[width=0.24\linewidth]{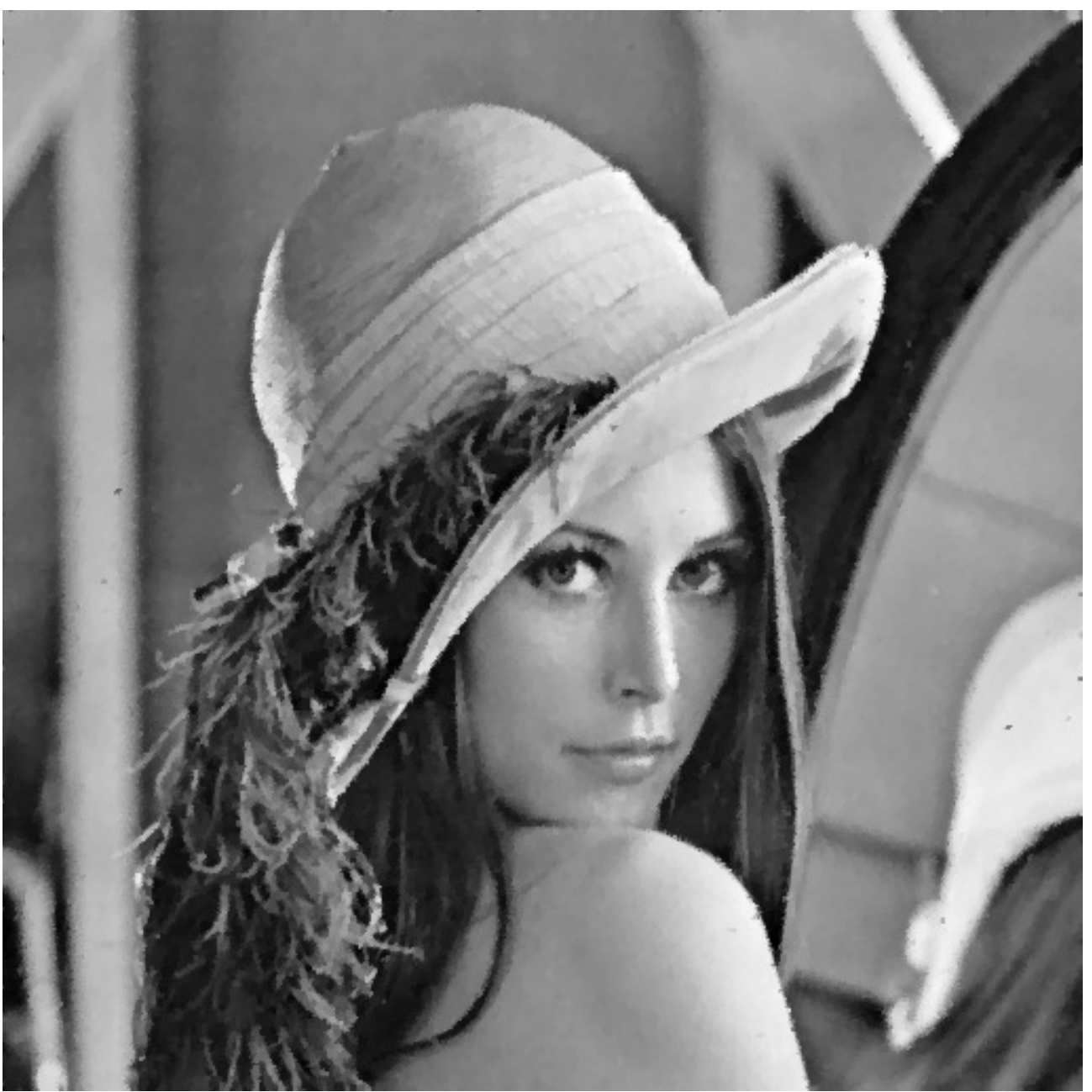}}
{\includegraphics[width=0.24\linewidth]{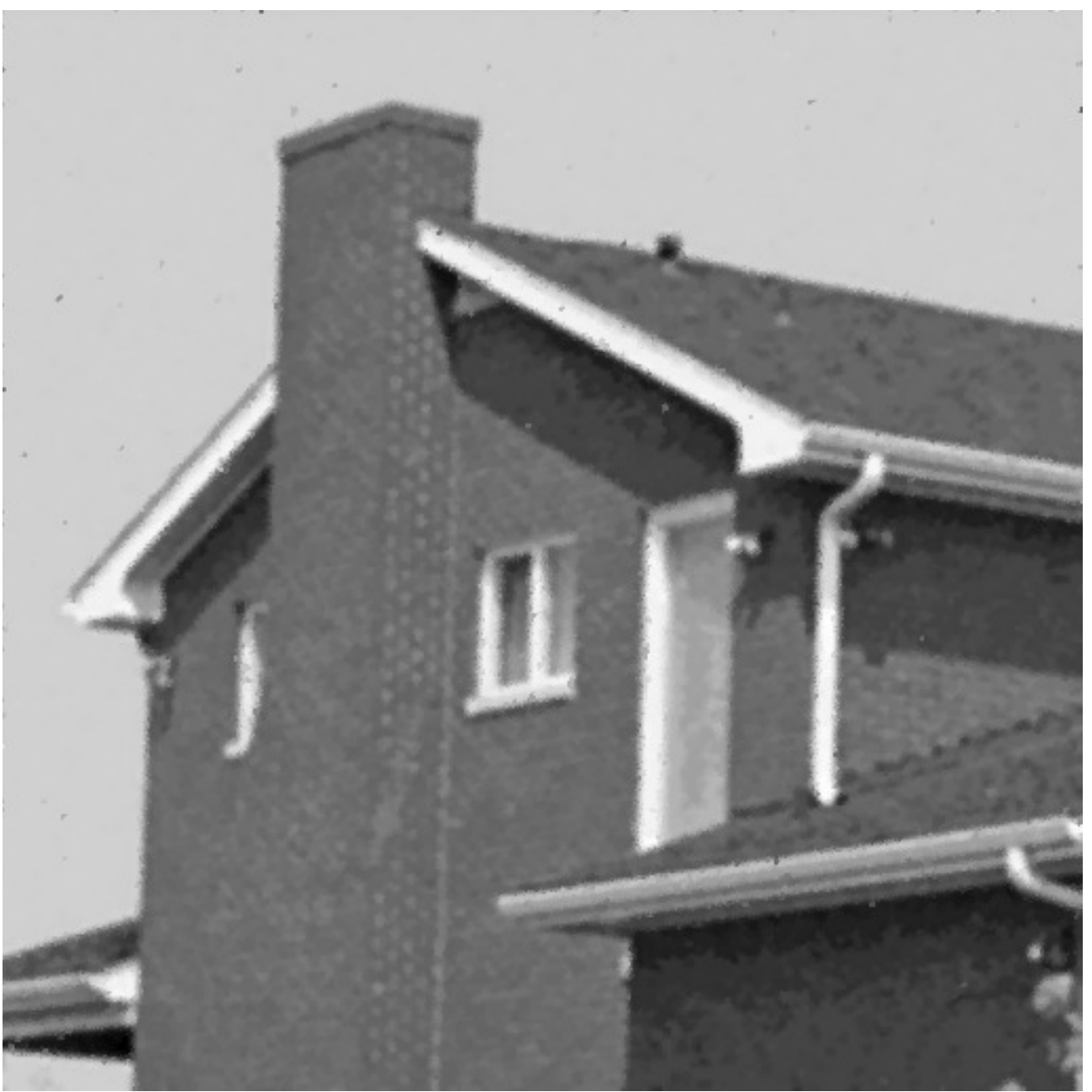}}
{\includegraphics[width=0.24\linewidth]{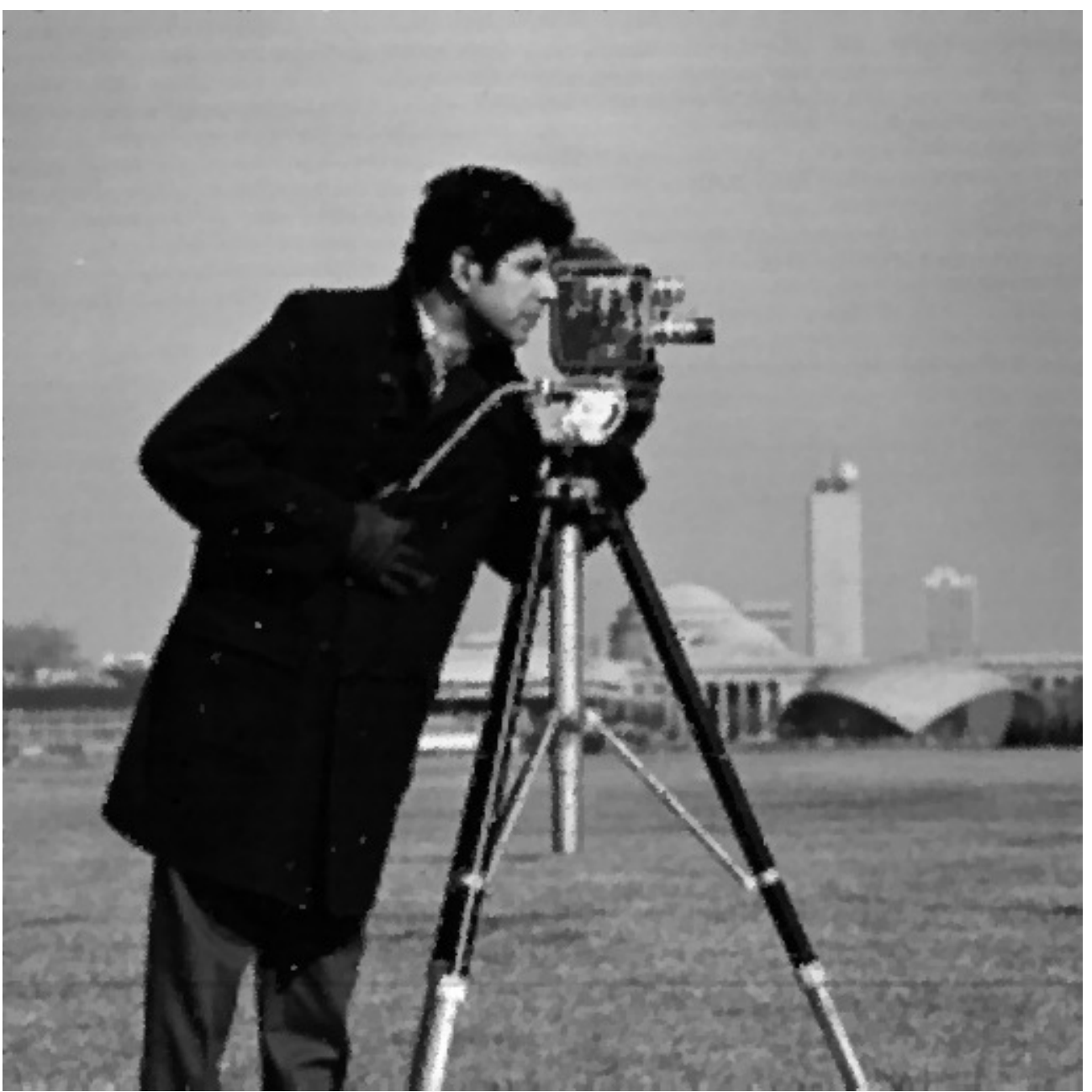}}
{\includegraphics[width=0.24\linewidth]{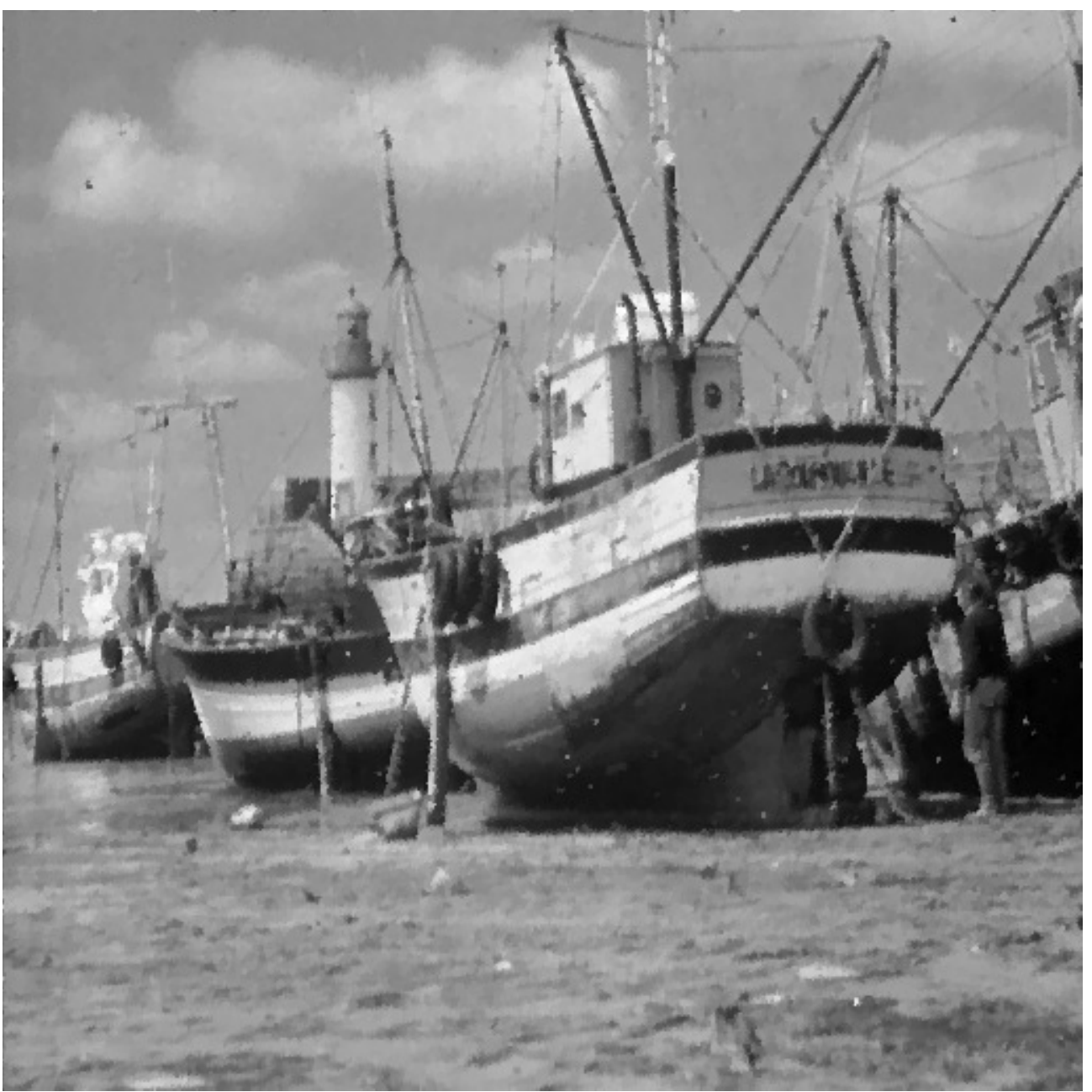}}\\
{\includegraphics[width=0.24\linewidth]{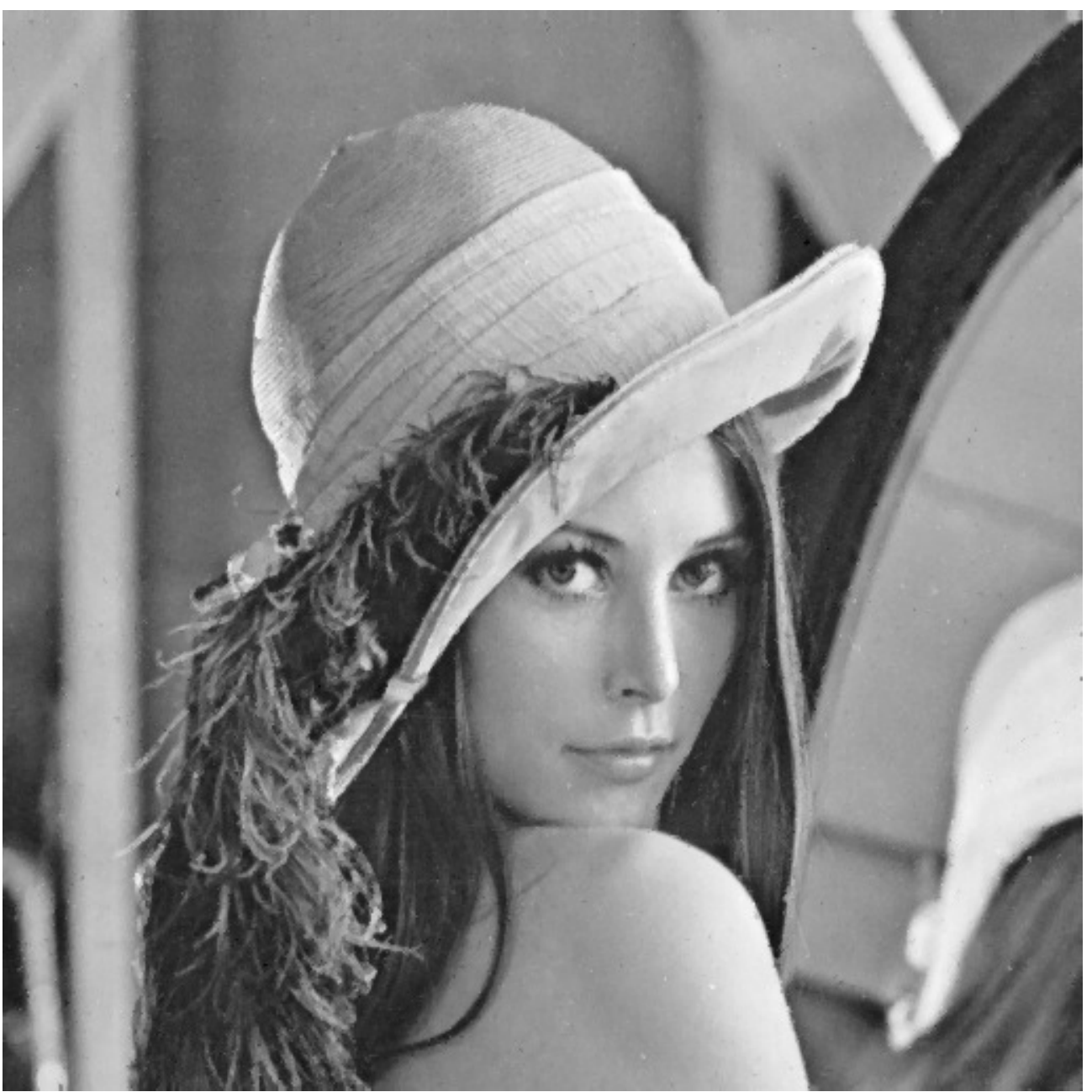}}
{\includegraphics[width=0.24\linewidth]{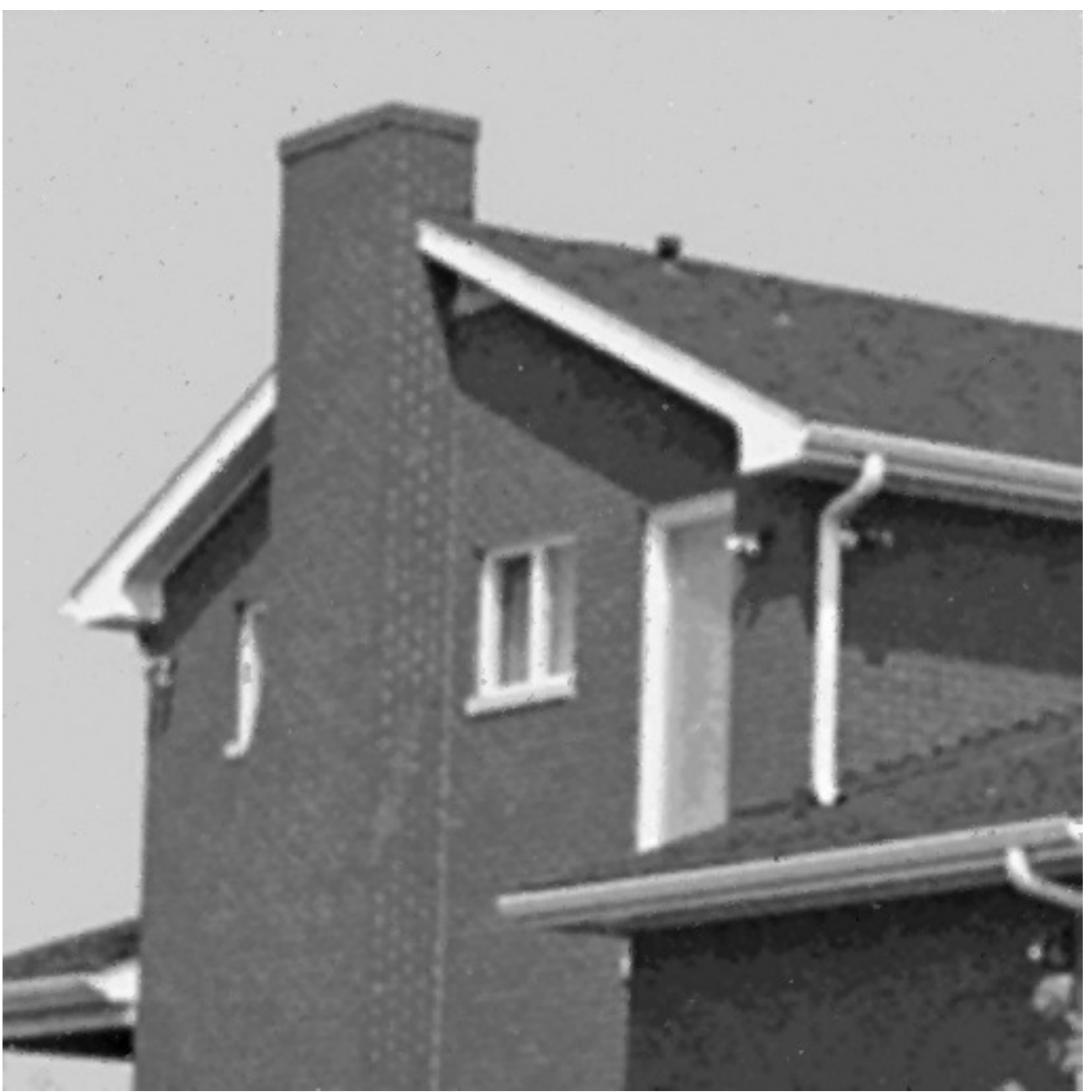}}
{\includegraphics[width=0.24\linewidth]{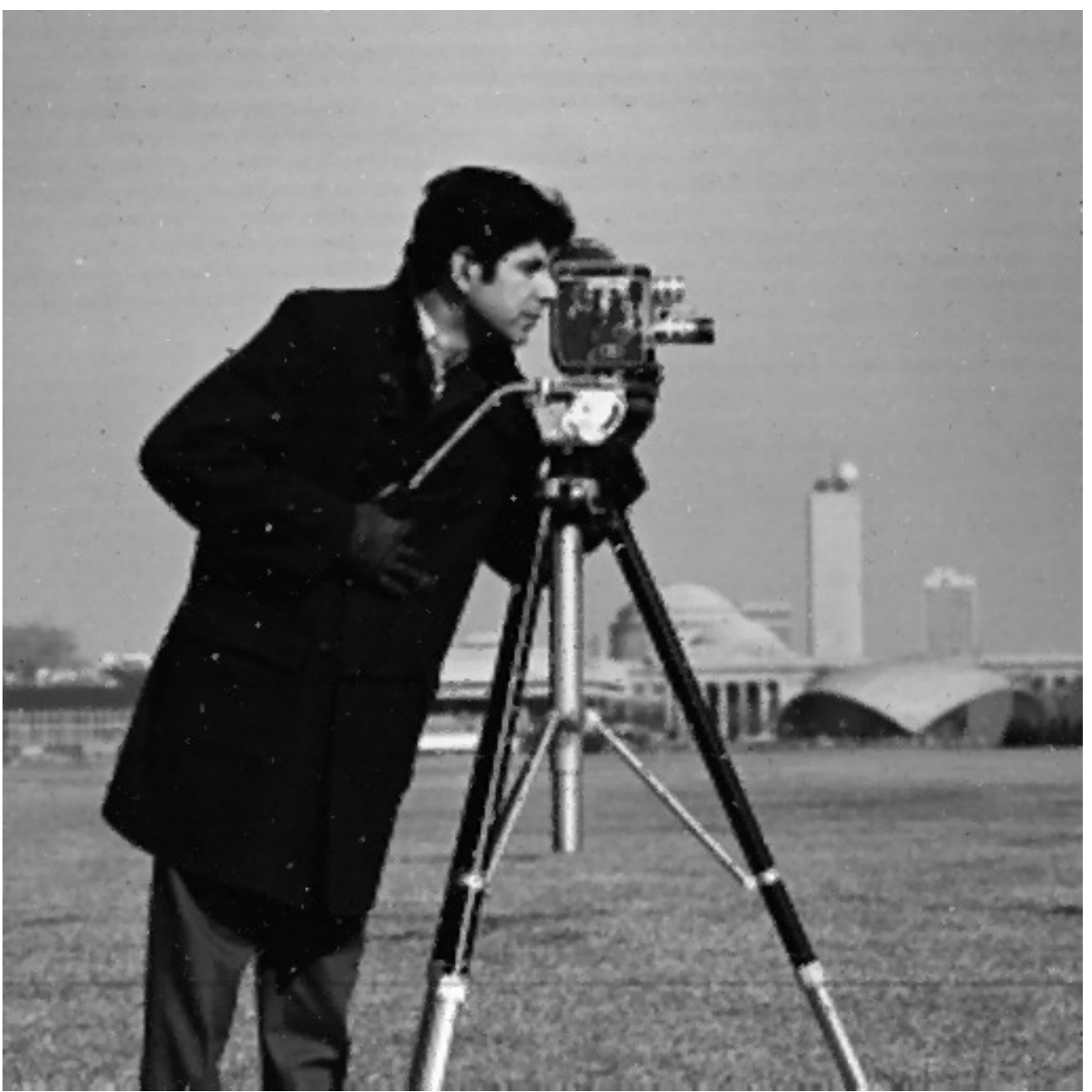}}
{\includegraphics[width=0.24\linewidth]{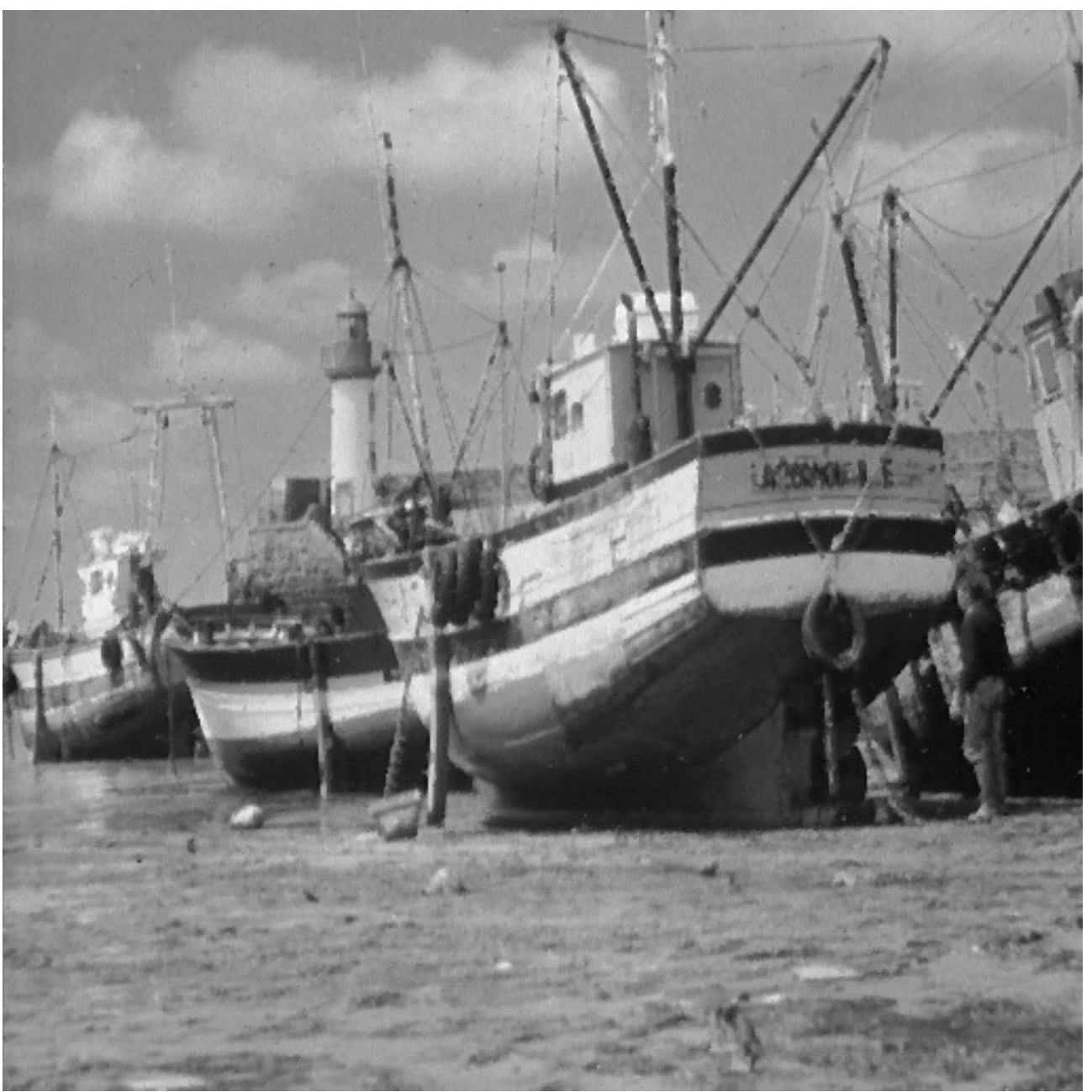}}
\caption{Denoising results of images contaminated by both Gaussian noise and random-valued impulse noise
with $\sigma=10$ and $s=25\%$. Top row: noisy images; Second row: the results restored by ACWMF; Third row: the results restored by TVL1; Bottom row: the results restored by total variation blind inpainting using AOP.}
\label{fig:Denoise_10_25}
\end{center}
\end{figure*}

Both experiments show that our method by iteratively updating the inpainting region and performing image inpainting provides better results in identifying the outliers and recovering damaged pixels. For salt-and-pepper impulse noise, because there are very accurate methods for detecting the corrupted pixels such as AMF, our method has similar performance as two-stage approaches. However, for random-valued impulse noise, there is no method can detect corrupted pixels accurately, especially when the noise level is high. Our method by iteratively updating the corrupted pixels is a better choice.

At the end of this section, we compare the damaged pixels detected by ACWMF and obtained from AOP in Fig.~\ref{fig:DetectionCompare} for the cameraman image. The damaged pixels are chosen randomly  ($s=40\%$). The pixels with black color are detected as damaged. The set obtained from AOP is also random and does not contain any information from the image, while the set detected by ACWMF still has some features from cameraman image.
\begin{figure}
\begin{center}
{\includegraphics[width=0.48\linewidth]{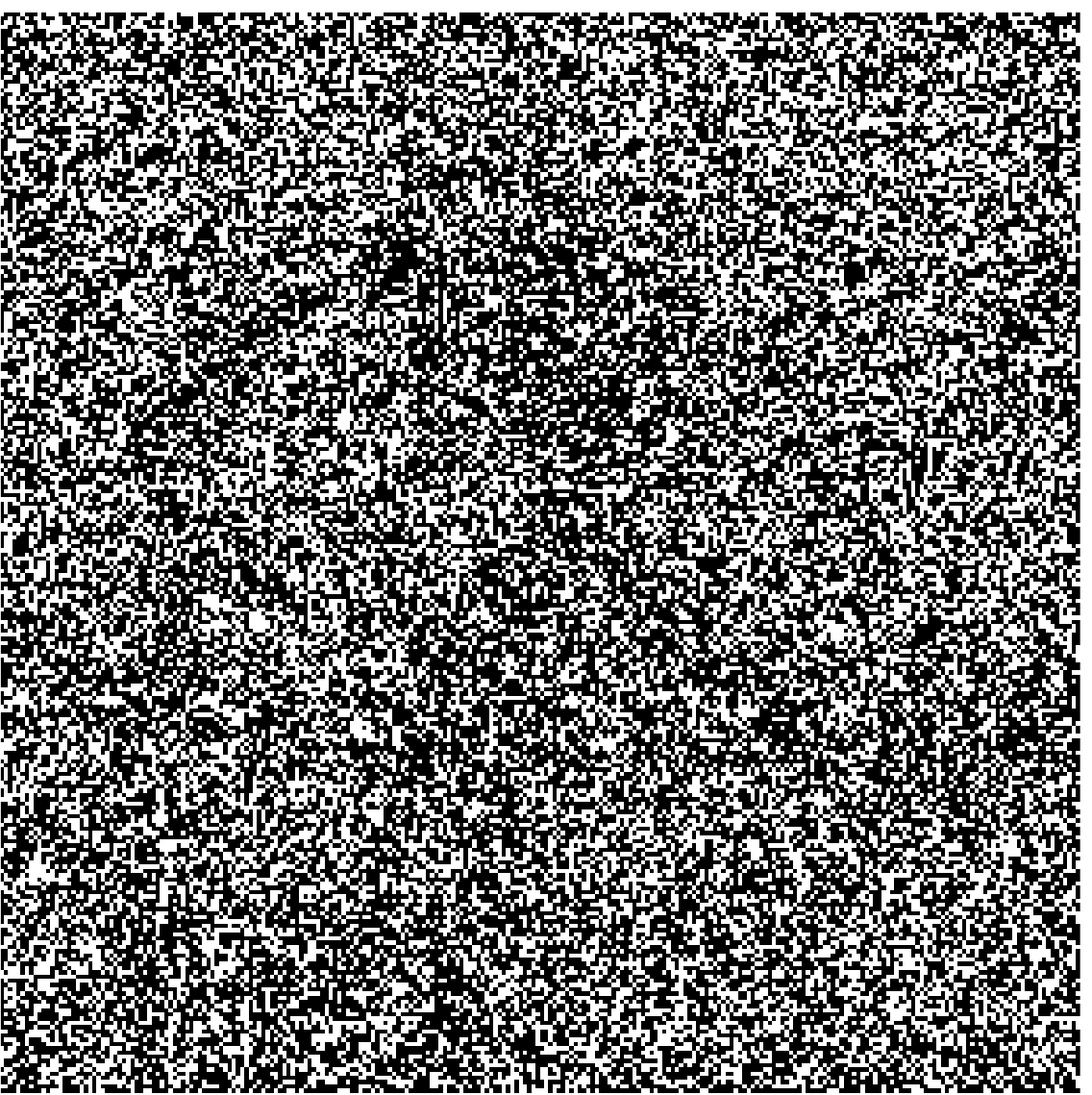}}
{\includegraphics[width=0.48\linewidth]{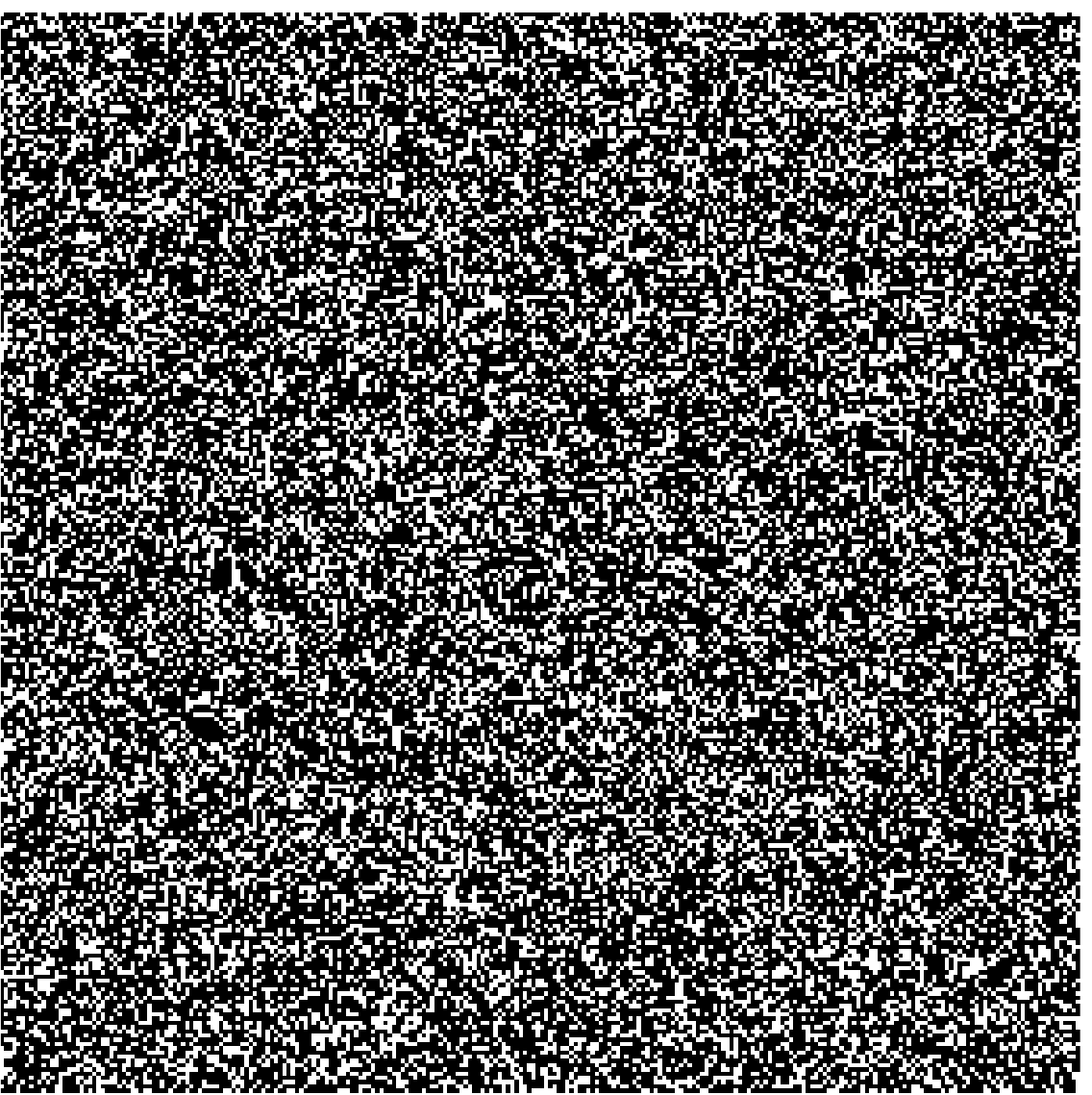}}
\caption{The damaged pixels detected by ACWMF (left column) and AOP (right column).}
\label{fig:DetectionCompare}
\end{center}
\end{figure}

\fi

\section{Conclusion}

This paper presents two general algorithms based on blind inpainting and $\ell_0$ minimization for removing impulse noise. The difference is in the treatment for the $\ell_0$ term: I) the $\ell_0$ term is put in the objective function, II) the $\ell_0$ term is in the constraint. Both problems can be solved by iteratively restoring the images and identifying the damaged pixels. The performance of these two methods is similar, and the connection between these two methods is shown. It is also shown in the experiments that the proposed methods perform better than other methods. This simple idea can also be applied to other cases where the noise model is not Gaussian.

\section*{Acknowledgment}
This work was supported by NSF Grant DMS-0714945 and Center for Domain-Specific Computing (CDSC) under the NSF Expeditions in Computing Award CCF-0926127. We would like to thank the anonymous referees for making several very helpful suggestions.

{
\bibliographystyle{IEEEtran}
\bibliography{IEEEabrv,egbib}
}

\end{document}